\newtheorem{theorem}{Theorem}
\newtheorem{lemma}[theorem]{Lemma}
\newtheorem{corollary}[theorem]{Corollary}
\newtheorem{definition}[theorem]{Definition}
\newtheorem{remark}[theorem]{Remark}
\newtheorem{example}[theorem]{Example}
\newtheorem{listprop}[theorem]{List of Properties}
\numberwithin{theorem}{section} \numberwithin{equation}{section}
\begin{document}

\title{Subdivision schemes of sets and the approximation of set-valued functions in the symmetric difference metric\thanksref{T1}}
\runtitle{Subdivision schemes of sets and the approximation of
set-valued functions in the symmetric difference metric}
\author{\fnms{Shay} \snm{Kels}\thanksref{t1}\ead[label=e1]{kelsisha@tau.ac.il}}
\and
\author{\fnms{Nira} \snm{Dyn}\ead[label=e2]{niradyn@tau.ac.il}}
\address{
School of Mathematical Sciences, Tel-Aviv University, Tel-Aviv,
Israel\\
\printead{e1,e2}} \runauthor{Shay Kels and Nira Dyn}
\thankstext{T1}{This is a preprint version of the paper.}
\thankstext{t1}{corresponding author}
\date{}
\maketitle
\begin{abstract}

In this work we construct subdivision schemes refining general
subsets of $\mathbb{R}^n$ and study their applications to the
approximation of set-valued functions. Differently from previous
works on set-valued approximation, our methods are developed and
analyzed in the metric space of Lebesgue measurable sets endowed
with the symmetric difference metric. The construction of the
set-valued subdivision schemes is based on a new weighted average of
two sets, which is defined for positive weights (corresponding to
interpolation) and also when one weight is negative (corresponding
to extrapolation).

Using the new average with positive weights, we adapt to sets spline
subdivision schemes computed by the Lane-Riesenfeld algorithm, which
requires only averages of pairs of numbers. The averages of numbers
are then replaced by the new averages of pairs of sets. Among other
features of the resulting set-valued subdivision schemes, we prove
their monotonicity preservation property. Using the new weighted
average of sets with both positive and negative weights, we adapt to
sets the 4-point interpolatory subdivision scheme. Finally we
discuss the extension of the results obtained in metric spaces of
sets, to general metric spaces endowed with an averaging operation
satisfying certain properties.
\end{abstract}

\section{Introduction}
Approximation of set-valued functions (SVFs) has various potential
applications in optimization, control theory, mathematical
economics, medical imaging and more. The problem is closely related
to the approximation of an $N$-dimensional object from a sequence of
its parallel cross-sections, since such an object can be regarded as
a univariate set-valued function with sets of dimension $N-1$ as
images \cite{dyn2002spline,levin1986multidimensional}. In particular
for $N =3$, the problem is important in medical imaging and is known
as "reconstruction from parallel cross-sections" (\emph{see
e.g.}\cite{albu2008morphology,barequet2007nonlinear,bors2002binary}
\emph{and references therein}).

Motivated by the problem of set-valued approximation and its
applications, we develop and study set-valued subdivision schemes.
Real-valued subdivision schemes repeatedly refine numbers and
generate limit functions. When applied componentwise to points in
$\mathbb{R}^3$, the schemes generate smooth curves/surfaces, and as
such, are widely used in Computer Graphics and Geometric Design.
When the initial data are samples of a function, the limit of the
subdivision approximates the sampled function. For a general review
on subdivision schemes see \cite{dyn2003subdivision}. In this work,
we propose a new method for the adaptation of subdivision schemes to
sets, and show convergence and approximation properties of the
resulting set-valued subdivision schemes.

In the case of data consisting of convex sets, methods based on the
classical \emph{Minkowski sum} of sets can be used
\cite{dyn2000spline,vitale1979approximation}. In this approach, sums
of numbers in positive operators for real-valued approximation are
replaced by Minkowski sums of sets. A more recent approach is to
embed the given convex sets into the Banach space of \emph{directed
sets} \cite{baier2001differences}, and to apply any existing method
for approximation in Banach spaces \cite{baier2011set}.

The case of data consisting of general sets (not necessarily
convex), which is relevant in many applications, is more
challenging. For data sampled from a set-valued function with
general sets as images, methods based on Minkowski sum of sets fail
to approximate the sampled function
\cite{dyn2005set,vitale1979approximation}. So other operations
between sets are needed.

In the spirit of Frechet expectation \cite{frechet1948elements}, Z.
Artstein proposed in \cite{artstein1989pla} to interpolate data
sampled from a set-valued function in a piecewise way, so that for
two sets $F_1,F_2 \subset \mathbb{R}^n$ given at consecutive points
$x_1,x_2$,  the interpolant $F\left(\cdot\right)$ satisfies for any
$t_1,t_2 \in \left[x_1,x_2\right]$,
\begin{equation}\label{MetricProperty1}
{dist}\left( {F\left( {t_1 } \right),F\left( {t_2 } \right)} \right)
= \frac{{\left| {t_2  - t_1 } \right|}} {{x_2  - x_1 }}{dist}\left(
{F_1 ,F_2 } \right) ,
\end{equation}
where  $dist$ is a metric on sets. The relation
(\ref{MetricProperty1}) is termed the \emph{metric property}. A
weighted average of two sets introduced in \cite{artstein1989pla},
and termed later as the \emph{metric average}, leads to a piecewise
interpolant satisfying the metric property relative to the Hausdorff
metric.

Extending this work, in \cite{dyn2001spline} the metric average is
applied in the Lane-Riesenfeld algorithm for spline subdivision
schemes. The set-valued subdivision schemes obtained this way are
shown to approximate SVFs with general sets as images. The
convergence and the approximation results obtained in
\cite{dyn2001spline} are based on the metric property of the metric
average. The adaptation to sets of certain positive linear operators
based on the metric average is described in
\cite{dyn2006approximations}. For reviews on set-valued
approximation methods see also \cite{dynapproximation,muresanset}.

As it is noticed in \cite{artstein1989pla}, the particular choice of
metric is crucial to the construction and analysis of set-valued
methods. While previous works develop and analyze set-valued
approximation methods in the metric space of compact sets endowed
with the Hausdorff metric, we consider the problem in the metric
space of Lebesgue measurable sets with the symmetric difference
metric\footnote{The measure of the symmetric difference is only a
pseudo-metric on Lebesgue measurable sets. The metric space is
obtained in a standard way as described in Section
\ref{sectionPreliminaries}}. Our setting allows us to approximate
SVFs, which are H{\"o}lder continuous in the symmetric difference
metric but may be discontinuous in the Hausdorff metric, as
illustrated by the following simple example.
\begin{example}\label{ExampMotivation}
Let $F$ be a SVF from $\mathbb{R}$ to subsets of $\mathbb{R}$,
\begin{equation}
F\left( x \right) = \left\{ {y:1 \leq y \leq 2 - \left| x \right|}
\right\} \bigcup \left\{ {y:3 \leq y \leq 4 - 2\left| x \right|}
\right\} \ ,
\end{equation}
with graph given in Figure \ref{fig:Discont}. It is easy to observe
that $F$ is discontinues at $x = \frac{1}{2}$ (and also at $x =
-1,-\frac{1}{2},1$), if the distance between subsets of $\mathbb{R}$
is measured in the Hausdorff metric, but it is Lipschitz continuous
everywhere if the distance is measured in the symmetric difference
metric.
\end{example}
\begin{figure}
\begin{center}
\includegraphics[scale=0.3]{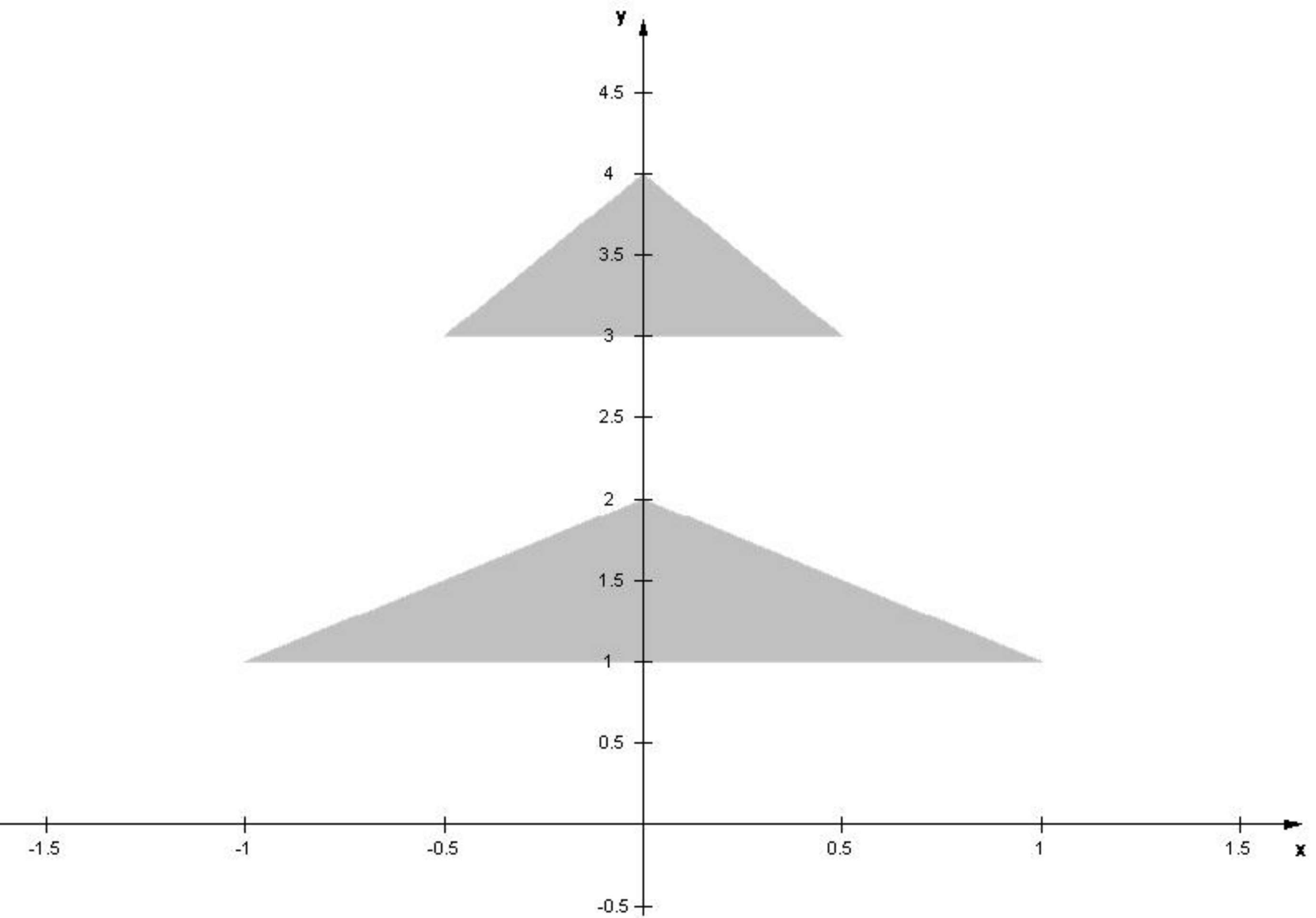}
\end{center}
\caption{The graph of a SVF $F\left(x\right)$, which is Lipschitz
continuous relative to the symmetric difference metric, while
discontinuous relative to the Hausdorff metric.} \label{fig:Discont}
\end{figure}
On the other hand, under mild assumptions on the sets
$F\left(x\right)$, H{\"o}lder continuity in the Hausdorff metric
implies H{\"o}lder continuity in the symmetric difference metric
\cite{galton2000qualitative}. Since there is an intrinsic connection
between the continuity and approximability of a function,
approximation results for H{\"o}lder continuous SVFs obtained in the
symmetric difference metric apply to a wider class of functions than
similar results obtained in the Hausdorff metric.

In order to develop approximation methods in the space of sets with
the symmetric difference metric, we introduce a new binary weighted
average of \emph{regular compact Jordan measurable} sets. The new
average is built upon the method introduced in
\cite{levin1986multidimensional}, which is known as the
\emph{shaped-based interpolation} in the engineering literature
\cite{herman1992shape,raya1990shape}. Our new average has the metric
property relative to the symmetric difference metric. In addition
when both weights are non-negative, the measure of the average of
the two sets is equal to the average with the same weights of the
measures of the two sets.  We term this feature of the new average
the \emph{measure property}, and term our average the \emph{measure
average}.

The measure average performs locally on each \emph{connected
component} of the symmetric difference of the two operand sets,
leading to satisfactory geometric performance, which is essential in
many applications. In particular, the ideas of this work lead to a
practical algorithm for the reconstruction of 3D objects from their
2D cross-sections described in \cite{kels2011reconstruction}.

First we use the measure average to interpolate between a sequence
of sets in a piecewise way. Then we adapt to sets spline subdivision
schemes, expressed in terms of repeated binary averages of numbers
using the Lane-Riesenfeld algorithm \cite{lane1980theoretical}. As
in the case of the metric average \cite{dyn2002spline}, we prove
convergence of the spline subdivision to a Lipschitz continuous
limit SVF $F^\infty  \left( \cdot \right)$. It follows from the
measure property of the measure average, that $\mu \left( {F^\infty
\left( \cdot \right)} \right)$ is the limit of the same spline
subdivision scheme applied to the measures of the initial sets.
Moreover, we prove that spline subdivision schemes adapted to sets
with the measure average are monotonicity preserving in the sense of
the set-inclusion relation.

It is well known, that in order to obtain "reasonable" interpolation
methods other than the piecewise interpolation, some notion of
extrapolation is needed. Our measure average of sets is defined for
positive weights and also when one weight is negative, therefore it
performs both interpolation and extrapolation. Using the measure
average with both negative and non-negative weights, we adapt to
sets the 4-point interpolatory subdivision scheme of
\cite{dyn19874}. This is the first adaptation of an interpolatory
subdivision scheme to sets. We prove that the 4-point subdivision
scheme based on the measure average converges to a continuous limit
SVF and approximates H{\"o}lder continuous SVFs, when the initial
sets are samples of such a function.

We observe that many results on set-valued subdivision obtained in
this and previous works are based on the triangle inequality in the
underlying metric space, along with the metric property of the
average of sets. Employing this observation, we extend several
results obtained in the context of metric spaces of sets, to general
metric spaces endowed with an average having the metric property.

The structure of this work is as follows. Preliminary definitions
are given in Section \ref{sectionPreliminaries}. In Section
{\ref{sectionDistanceAverage}}, we study properties of the average
of sets based on the method in \cite{levin1986multidimensional} ,
which are relevant to the construction of our measure average. In
Section \ref{sectionMeasureAverage} we introduce our measure average
of sets, prove its important features and apply it to the
interpolation between sets in a piecewise way. Spline subdivision
schemes based on the measure average are studied in Section
\ref{sectionSplineSubdivision}, while in Section
\ref{section4pointScheme}, we adapt to sets the 4-point subdivision
scheme. In Section \ref{sectionComputation}, we provide several
computational examples illustrating our analytical results. Finally,
extensions of some of the results obtained in metric spaces of sets
to general metric spaces are discussed in Section
\ref{sectionExtensions}.

\section{Preliminaries}\label{sectionPreliminaries}
First we introduce some definitions and notation. We denote by $\mu$
the n-dimensional \emph{Lebesgue measure} and by $\mathfrak{L}_n$
the collection of \emph{Lebesgue measurable subsets} of
$\mathbb{R}^n$ having finite measure. The \emph{set difference} of
two sets $A,B$ is,
\begin{equation*}
A \setminus B = \left\{ {p:p \in A,p \notin B} \right\} \ ,
\end{equation*}
and the \emph{symmetric difference} is defined by,
\begin{equation*}
A\Delta B = A\setminus B \bigcup B\setminus A \ .
\end{equation*}
 The \emph{measure of the symmetric difference} of $A,B \in \mathfrak{L}_n$,
\begin{equation*}
d_\mu  \left( {A,B} \right) = \mu \left( {A\Delta B} \right) \ ,
\end{equation*}
induces a pseudo-metric on $\mathfrak{L}_n$, and  $\left(
{\mathfrak{L}_n ,d_\mu  } \right)$ is a complete metric space by
regarding any two sets $A, B$ such that $\mu \left( {A\Delta B}
\right) = 0$ as equal (\cite{halmos1974measure}, Chapter 8). For
$A,B \in \mathfrak{L}_n$, such that $B \subseteq A$, it is easy to
observe that
\begin{equation}\label{IncludedDistance}
d_\mu  \left( {A,B} \right) = \mu \left( {A\setminus B} \right) =
\mu \left( A \right) - \mu \left( B \right) \ .
\end{equation}

The \emph{boundary} of a set $A$ is denoted by $\partial A$, and we
use the notation ${\operatorname{ci}}\left( A \right)$ for the
\emph{closure} of the \emph{interior} of $A$. A bounded set $A$,
such that $A = \operatorname{ci}\left(A\right)$ is called
\emph{regular compact}. Regular compact sets are closed under finite
unions, but not under finite intersections, yet for $A,B$ regular
compact such that $B \subset A$, it holds trivially that $A \cap B =
B = \operatorname{ci}\left( {A \cap B} \right)$.

We recall that a set $A$ is \emph{Jordan measurable} if and only if
$ \mu \left( {\partial A} \right) = 0$. It is easy to see that for a
Jordan measurable $A$,
\begin{equation}\label{jordanCIA}
\mu \left( A \right) = \mu \left( {{\operatorname{ci}}\left( A
\right)} \right) \ .
\end{equation}

We denote by $\mathfrak{J}_n$ the subset of $\mathfrak{L}_n$
consisting of \emph{regular compact Jordan measurable sets}. Notice
that for any $A,B \in \mathfrak{J}_n$, $d_\mu  \left( {A,B} \right)
= 0$ implies $A = B$, therefore $d_\mu$ is a metric of
$\mathfrak{J}_n$. Moreover, by its definition $\mathfrak{J}_n$ is
closed under finite unions.

We recall that a set $A$ is called \emph{connected} if there are no
two disjoint open sets $V_1,V_2 \subset \mathbb{R}^n$, such that $A
= \left( {A \bigcap V_1 } \right) \bigcup \left( {A \bigcap V_2 }
\right)$. The set $C \subseteq D$ is called a \emph{connected
component} of $D$ if it is connected, and if there is no connected
set $B$, such that $C \subset B \subseteq D$.

\section{The "distance average" of sets}\label{sectionDistanceAverage}
The basic tool for the construction of the measure average of sets
to be introduced in the next section is what we call the
\emph{distance average} of sets. In this section we derive
properties of the distance average that are relevant to our
construction.

\subsection{Definition and basic properties}
The distance average is based on the method introduced in
\cite{levin1986multidimensional}, which employs the \emph{signed
distance functions} of sets. The \emph{signed distance} from a point
$p$ to a non-empty set $A \subset \mathbb{R}^n$ is defined by,
\begin{equation}
d_S\left( {p,A} \right) = \left\{ \begin{gathered}
  d\left( {p, \partial A} \right)\ \ \ \ \ \ \ \ p \in A \\
   - d\left( {p, \partial A} \right)\ \ \ \ \ \ \ p \notin A \ ,\\
\end{gathered}  \right .
\end{equation}
where $d\left(q,B\right)$ is the Euclidean distance from a point $q$
to a set $B$, namely
\[
d\left( {q,B} \right) = \mathop {\min }\limits_{b \in B} \left\| {q
- b} \right\| \ .
\]
The \emph{signed distance function} of $A$ is defined on
$\mathbb{R}^n$ as $d_S\left( {\cdot,A} \right)$.

\begin{definition}\label{DistanceAverageDef}
The distance average with the averaging parameter $x \in \mathbb{R}$
of two not-empty sets $A,B \in \mathfrak{J}_n$ is,
\begin{equation}\label{AuxiliaryAverage}
xA\widetilde  \bigoplus \left( {1 - x} \right)B = {\left\{
{p:f_{A,B,x} (p) \geq 0} \right\}}  \ ,
\end{equation}
where
\begin{equation}\label{fAB}
f_{A,B,x} \left( p \right) = xd_S \left( {p,A} \right) + (1 - x)d_S
\left( {p,B} \right) \ .
\end{equation}
\end{definition}
Note that $f_{A,B,x}$ is not the signed distance function of the set
$xA\widetilde  \bigoplus \left( {1 - x} \right)B$. Also note that
$f_{A,B,x}$ is continuous by the continuity of the distance
function.

We observe a few properties of the distance average that are
relevant to the construction of the measure average in the next
section.
\begin{lemma}\label{AuxiliaryAverageProperties} Let $A,B \in \mathfrak{J}_n$ and $x \in \mathbb{R}$, then
\begin{enumerate}
\item \label{AuxiliaryThroughTheEnds}
$0A \widetilde \bigoplus 1B = B$, $1A \widetilde \bigoplus 0B = A$

\item \label{AuxiliaryAwithA}
$xA \widetilde \bigoplus (1-x) A \mathop  = A$

\item\label{AuxiliaryInclusionProperty}
For $B \subseteq A$, $x_1 \leq x_2$,  $x_1A \widetilde \bigoplus
\left( {1 - x_1} \right)B \subseteq x_2A\widetilde \bigoplus \left(
{1 - x_2} \right)B $

\item \label{AuxiliaryMiddleProperty} For $x \in [0,1]$, $A \bigcap B \subseteq xA\widetilde  \bigoplus \left( {1 - x} \right)B \subseteq A \bigcup B$

\item \label{AuxiliaryBoundedProperty} $xA \widetilde \bigoplus \left(1 - x\right)B$ is a bounded closed set

\end{enumerate}
\end{lemma}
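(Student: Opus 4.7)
The plan is to prove the five items in turn, exploiting the explicit formula $f_{A,B,x}(p) = x\,d_S(p,A) + (1-x)\,d_S(p,B)$ together with the characterization $d_S(p,A) \geq 0 \iff p \in A$, which is valid because $A$ is closed, hence $\partial A \subseteq A$ and any $p \notin A$ has $d(p,\partial A) \geq d(p,A) > 0$. For item (\ref{AuxiliaryThroughTheEnds}), substituting $x=0$ and $x=1$ collapses $f_{A,B,x}$ to $d_S(\cdot,B)$ and $d_S(\cdot,A)$, whose superlevel sets $\{f \geq 0\}$ are $B$ and $A$ by the characterization. For item (\ref{AuxiliaryAwithA}), $f_{A,A,x} \equiv d_S(\cdot,A)$ regardless of $x$, giving $A$.

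Item (\ref{AuxiliaryMiddleProperty}) is a sign analysis: since $x, 1-x \geq 0$, a point $p \in A \cap B$ has $d_S(p,A), d_S(p,B) \geq 0$, hence $f_{A,B,x}(p) \geq 0$; conversely, $p \notin A \cup B$ forces both signed distances to be strictly negative (and at least one weight positive), so $f_{A,B,x}(p) < 0$. Item (\ref{AuxiliaryInclusionProperty}) reduces, via
\begin{equation*}
f_{A,B,x_2}(p) - f_{A,B,x_1}(p) = (x_2 - x_1)\bigl(d_S(p,A) - d_S(p,B)\bigr),
\end{equation*}
to proving the pointwise monotonicity $d_S(\cdot,A) \geq d_S(\cdot,B)$ whenever $B \subseteq A$; this is the main obstacle. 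I would handle it by cases on the location of $p$: if $p$ is in the interior of $B$, the open ball of radius $d(p,\partial B)$ around $p$ lies inside $B \subseteq A$, so $d(p,\partial A) \geq d(p,\partial B)$ and both signed distances are nonnegative; if $p \in \partial B$ then $d_S(p,B) = 0 \leq d_S(p,A)$ since $p \in A$; if $p \in A \setminus B$ then $d_S(p,A) \geq 0 \geq d_S(p,B)$ trivially; and if $p \notin A$, then $p \notin B$ with $d(p,A) \leq d(p,B)$, which combined with $d(p,\partial A) = d(p,A)$ and $d(p,\partial B) = d(p,B)$ (holding for points outside closed sets) yields $d_S(p,A) = -d(p,A) \geq -d(p,B) = d_S(p,B)$.

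For item (\ref{AuxiliaryBoundedProperty}), closedness of $\{f_{A,B,x} \geq 0\}$ follows from the continuity of $f_{A,B,x}$ noted after Definition \ref{DistanceAverageDef}. For boundedness I pick $R$ large enough that $A \cup B$ lies in the ball of radius $R$ at the origin, and use that for $\|p\| > R$ one has $\|p\| - R \leq d(p,\partial A), d(p,\partial B) \leq \|p\| + R$, so $d_S(p,A), d_S(p,B) = -\|p\| + O(1)$. Since the weights sum to one, the leading $-\|p\|$ terms aggregate into
\begin{equation*}
f_{A,B,x}(p) \leq -\|p\| + (|x| + |1-x|)R,
\end{equation*}
which is negative once $\|p\|$ is sufficiently large, confining the distance average to a ball. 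Notably this argument works even when $x \notin [0,1]$, where a single negative coefficient might otherwise threaten boundedness.
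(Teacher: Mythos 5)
Your proof is correct and follows essentially the same route as the paper's: sign analysis of $f_{A,B,x}$ for items (1), (2), (4), the pointwise inequality $d_S(\cdot,B)\leq d_S(\cdot,A)$ for item (3), and the asymptotic behavior $d_S(p,\cdot)=-\|p\|+O(1)$ for boundedness in item (5). You supply slightly more detail where the paper merely asserts (the case analysis proving $d_S(p,A)\geq d_S(p,B)$ when $B\subseteq A$), and your boundedness bound $f_{A,B,x}(p)\leq -\|p\|+(|x|+|1-x|)R$ handles all $x$ uniformly, whereas the paper splits into $x\in[0,1]$ and $x\notin[0,1]$; both rest on the same fact that the two signed distance functions differ by a bounded amount.
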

\begin{proof}
Properties \ref{AuxiliaryThroughTheEnds}-\ref{AuxiliaryAwithA}
follow from Definition \ref{DistanceAverageDef}. To obtain Property
\ref{AuxiliaryInclusionProperty}, observe that $d_S(p,B) \leq
d_S(p,A)$ and consequently $ f_{A,B,x_1} \left( p \right) \leq
f_{A,B,x_2} \left(p \right) $.

To prove Property \ref{AuxiliaryMiddleProperty}, note that for $p
\in A \bigcap B$, $d_S \left( {p,A} \right) \geq 0,d_S \left( {p,B}
\right) \geq 0$, so for $x \in [0,1]$, $f_{A,B,x}\geq 0$ and $p \in
xA \widetilde \bigoplus (1-x)B$. Now with a similar argument for $p
\notin A \bigcup B$, Property \ref{AuxiliaryMiddleProperty} is
proved.

The set $xA \widetilde \bigoplus \left(1 - x\right)B$ is closed by
definition, so in order to prove Property
\ref{AuxiliaryBoundedProperty}, it is sufficient to show that for
any $x$ this set is bounded. Since $A,B$ are bounded, so is their
union. Therefore for $x \in [0,1]$, Property
\ref{AuxiliaryBoundedProperty} follows from Property
\ref{AuxiliaryMiddleProperty}. For $x \notin [0,1]$, in view of
Definition \ref{DistanceAverageDef} we have,
\begin{equation*}
 xA \widetilde  \bigoplus \left( {1 - x} \right)B \subseteq  {A \bigcup B} \bigcup \zeta _{A,B,x} \ ,
\end{equation*}
where
\begin{equation}\label{zetaSet}
\zeta _{A,B,x}  = \left\{ {p:d_S \left( {p,A} \right) < 0,d_S \left(
{p,B} \right) < 0,xd_S \left( {p,A} \right) + \left( {1 - x}
\right)d_S \left( {p,B} \right) \geq 0} \right\} \ .
\end{equation}
Since $A \bigcup B$ is bounded, it is enough to show that also
$\zeta _{A,B,x}$ is bounded. Without loss of generality, assume that
$x > 1$. There exists $\theta  > 0$, such that for any $p \in
\mathbb{R}^n$,
\begin{equation}\label{theta}
\left| {d_S \left( {p,A} \right) - d_S \left( {p,B} \right)} \right|
< \theta \ .
\end{equation}
From (\ref{zetaSet}) and (\ref{theta}),
\begin{equation}\label{zetaSet1}
\zeta _{A,B,x}  \subseteq \left\{ {p:d_S \left( {p,A} \right) <
0,d_S \left( {p,B} \right) < 0,xd_S \left( {p,A} \right) + \left( {1
- x} \right)\left( {d_S \left( {p,A} \right) - \theta } \right) \geq
0} \right\} \ .
\end{equation}
and therefore,
\begin{equation*}
\zeta _{A,B,x}  \subseteq \left\{ {p:d_S \left( {p,A} \right) <
0,d_S \left( {p,B} \right) < 0,\left| {d_S \left( {p,A} \right)}
\right| \leq \left( {x - 1} \right)\theta } \right\} \ ,
\end{equation*}
from which it follows that $\zeta _{A,B,x}$ is bounded. \qed
\end{proof}

We extend the domain of the distance average to include the empty
set. In case $B = \phi$, $A \neq \phi$, choose a "center" point $q$
of $A$ such that,
\begin{equation}\label{SupDist}
d_S \left( {q,A} \right) = \mathop {\sup }\limits_{a \in A} \left\{
{d_S \left( {a,A} \right)} \right\} \ .
\end{equation}
We define $xA \widetilde \bigoplus (1-x)B$ as the set,
\begin{equation*}
\left\{ {p:xd_S \left( {p,A} \right) + \left( {x - 1} \right)\left\|
{p - q} \right\| \geq 0} \right\} \ .
\end{equation*}
The average of two empty sets is the empty set. One can verify that
with these definitions Properties
\ref{AuxiliaryThroughTheEnds}-\ref{AuxiliaryBoundedProperty} of
Lemma \ref{AuxiliaryAverageProperties} are preserved.

The distance average does not satisfy the metric and the measure
properties. For example, for any non-empty $A,B \in \mathfrak{J}_n$
that are \emph{disjoint} ,  $\frac {1}{2}A \widetilde \bigoplus
\frac{1}{2}B$ is the empty set. Also consider the distance average
$\frac{1} {2}A \widetilde \bigoplus \frac{1} {2}B$ with  $A = [0,3]$
and $B=[0,1]\bigcup[2,3]$. From Property
\ref{AuxiliaryMiddleProperty} of Lemma
\ref{AuxiliaryAverageProperties}, we have that,$\frac{1} {2}A
\widetilde \bigoplus \frac{1} {2}B \subseteq A$, on the other hand
we now show the opposite inclusion. Let $p \in A \setminus B =
\left( {1,2} \right)$, it is easy to observe that $d_S \left( {p,A}
\right) > 0$, $d_S \left( {p,B }\right) < 0$ and $\left| {d_S \left(
{p,A} \right)} \right| > \left| {d_S \left( {p,B} \right)} \right|$.
Consequently, from the definition of the distance average $p \in
\frac{1} {2}A \widetilde \bigoplus \frac{1} {2}B$, thus $\frac{1}
{2}A \widetilde \bigoplus \frac{1} {2}B = A$. Of course this average
does not satisfy the metric property or the measure property.
Moreover, it is undesirable to obtain one of the original sets, as
an equally weighted average of the two different sets, since such
average does not reflect a gradual transition between the two sets.
We aim to define a new set average with desirable properties, using
the distance average as the basic tool for the construction.

Before presenting the new set average, we consider the measure of
the distance average as a function of the averaging parameter,
\begin{equation}\label{hFunction}
h(x) = \mu \left( {xA \widetilde  \bigoplus \left( {1 - x} \right)B}
\right) \ , \ x \in \mathbb{R} \ ,
\end{equation}
and study conditions for its continuity. Note that by Property
\ref{AuxiliaryBoundedProperty} in Lemma
\ref{AuxiliaryAverageProperties}, $h\left(x\right)$ is well defined
for all $x \in \mathbb{R}$.

\subsection{Continuity of the measure of the distance average}
First we prove a result, which might be of interest beyond its
application in our context,
\begin{lemma}\label{EuclideanDistanceMeasure}
Let $A,B$ be closed sets, and let $\lambda > 0, \lambda \neq 1$.
Define
\begin{equation}\label{mLambdaSet}
M_{A,B,\lambda }  = \left\{ {p:p \notin A \bigcup B,d\left( {p,A}
\right) = \lambda d\left( {p,B} \right)} \right\}\ ,
\end{equation}
then $\mu \left( {M_{A,B,\lambda}  } \right) = 0$.
\end{lemma}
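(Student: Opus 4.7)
The plan is to combine Rademacher's theorem with the Lebesgue density theorem, exploiting the fact that the gradient of a distance function has unit norm wherever it exists outside the set. I would introduce the Lipschitz function $g(p) = d(p,A) - \lambda d(p,B)$, so that $M_{A,B,\lambda}$ is contained in $g^{-1}(0) \setminus (A \cup B)$. Note that $M_{A,B,\lambda}$ is Lebesgue measurable, as an intersection of the open set $\mathbb{R}^n \setminus (A \cup B)$ with the zero set of the continuous function $g$.

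The argument rests on two standard facts. First, for any nonempty closed set $C \subset \mathbb{R}^n$ the distance function $d(\cdot,C)$ is $1$-Lipschitz, and at any point $p \notin C$ at which it is differentiable the gradient satisfies $\|\nabla d(p,C)\| = 1$; indeed, such differentiability is equivalent to uniqueness of the metric projection $\pi_C(p)$, and then $\nabla d(p,C) = (p - \pi_C(p))/\|p - \pi_C(p)\|$. Second, by the Lebesgue density theorem, almost every point of a measurable set is a point of density $1$ for it.

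Suppose for contradiction that $\mu(M_{A,B,\lambda}) > 0$. By Rademacher's theorem, $d(\cdot,A)$ and $d(\cdot,B)$ are differentiable almost everywhere, so I can choose $p \in M_{A,B,\lambda}$ that is simultaneously a density point of $M_{A,B,\lambda}$ and a point at which both distance functions are differentiable with gradient of unit norm. Then $g$ is differentiable at $p$ with $g(p) = 0$. A first-order expansion $g(p+h) = \nabla g(p) \cdot h + o(\|h\|)$ forces $\nabla g(p) = 0$: if instead $\nabla g(p) \neq 0$, the zero set of $g$ near $p$ would be trapped inside an arbitrarily thin double cone about the hyperplane orthogonal to $\nabla g(p)$, whose Lebesgue density at $p$ is zero, contradicting the choice of $p$. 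Consequently $\nabla d(p,A) = \lambda \nabla d(p,B)$, and taking Euclidean norms yields $1 = \lambda$, contradicting $\lambda \neq 1$.

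The one step requiring care is the density estimate of the previous paragraph, but this reduces to a routine computation bounding the volume of the slab $\{h : \|h\| < r,\ |\nabla g(p) \cdot h| \leq \varepsilon \|h\|\}$ and letting $\varepsilon \to 0$. All other steps invoke classical results about distance functions and Lebesgue measure.
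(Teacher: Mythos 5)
Your argument is correct, but it is a genuinely different proof from the one in the paper. The paper argues pointwise and geometrically: for each $p \in M_{A,B,\lambda}$ it moves a distance $\varepsilon$ along the segment toward a nearest point of $B$ and shows, using only the triangle inequality, that the strict inequality $\lambda d(\cdot,B) < d(\cdot,A)$ holds on a ball of radius $\frac{\lambda-1}{\lambda+1}\varepsilon$ about the displaced point; taking the union over $\varepsilon$ produces a cone of fixed aperture at $p$ disjoint from $M_{A,B,\lambda}$, and the cone version of the Lebesgue density theorem finishes. Your route instead passes through Rademacher's theorem: at a.e.\ point both distance functions are differentiable with unit gradient off the respective sets, a density point of $M_{A,B,\lambda}$ forces $\nabla\left(d(\cdot,A)-\lambda d(\cdot,B)\right)$ to vanish there, and taking norms gives $1=\lambda$. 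Both proofs ultimately invoke the density theorem, but yours trades the paper's elementary, self-contained triangle-inequality construction (which yields an explicit quantitative cone angle degenerating as $\lambda \to 1$) for a shorter conceptual argument that makes completely transparent where $\lambda \neq 1$ is used, at the cost of importing Rademacher and the unit-gradient property of distance functions. Two cosmetic remarks: the set $\left\{h : \|h\|<r,\ |\nabla g(p)\cdot h| \leq \varepsilon\|h\|\right\}$ is a double cone about the hyperplane orthogonal to $\nabla g(p)$ rather than a slab, though the volume estimate is routine either way; and you only need the easy direction of the differentiability statement (differentiability at $p \notin C$ implies $\|\nabla d(p,C)\|=1$, which follows from the one-sided radial derivative toward a nearest point together with the Lipschitz bound), not the full equivalence with uniqueness of the metric projection.
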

\begin{proof}
To prove the claim of the lemma, it is sufficient to show that for
any $p \in M_{A,B,\lambda}$, there exists a cone of constant angle
with $p$ as its vertex, which is not in  $M_{A,B,\lambda}$. Without
loss of generality assume that $\lambda > 1$. Let $p \in
M_{A,B,\lambda}$, then there exists an open ball of radius $d(p, A)
= \lambda d(p, B)$ around $p$ that contains no points of $A$, and
another open ball around $p$, of radius $d(p, B)$, which contains no
points of $B$. There is at least one point $v \in B$, such that
$\left\| {p - v} \right\| = d \left( {p,B} \right)$. Let
$\varepsilon \in \left( {0,d \left( {p,B} \right)} \right)$ and
$x_\varepsilon$ be the point on the segment $[p,v]$, such that
$\left\| {x_\varepsilon   - v} \right\| = \varepsilon$. The open
ball of radius $\lambda d(p, B) - \varepsilon$ around
$x_\varepsilon$ contains no points of $A$, and $d \left(
{x_\varepsilon, B} \right) = d \left( {p, B} \right) - \varepsilon
$. Since $\lambda \left( {d \left( {p,B} \right) - \varepsilon }
\right) < \lambda d \left( {p,B} \right) - \varepsilon $, there is
no point of $A$ at distance $\lambda d \left( {x_\varepsilon, B}
\right)$ from $x_\varepsilon$, and therefore  $x_\varepsilon \notin
M_{A,B,\lambda}$. Consider now a point $x'_{\varepsilon}$ at
distance $r$ from $x_\varepsilon$. By the triangle inequality, $d
\left( {x'_{\varepsilon}, B} \right) \leq d \left( {x_{\varepsilon},
B} \right) + r$ and $d \left( {x'_{\varepsilon}, A} \right) \geq d
\left( {x_{\varepsilon},A} \right) - r$. Therefore,
\begin{equation}\label{distXprimeB2}
d \left( {x'_{\varepsilon}, B} \right) \leq d \left( {p, B} \right)
- \varepsilon  + r \ ,
\end{equation}
and
\begin{equation}\label{distXprimeA2}
d \left( {x'_{\varepsilon}, A} \right) \geq \lambda d \left( {p, B}
\right) - \varepsilon  - r \ .
\end{equation}
In order to obtain that  $x'_{\varepsilon} \notin M_{A,B,\lambda}$,
it is enough to show that,
\begin{equation}
\lambda d \left( {x'_{\varepsilon}, B} \right) < d \left(
{x'_{\varepsilon}, A} \right) \ ,
\end{equation}
or using (\ref{distXprimeB2}) and (\ref{distXprimeA2}),
\begin{equation}\label{rEquation}
\lambda d \left( {p, B} \right) - \lambda \varepsilon  + \lambda r <
\lambda d \left( {p, B} \right) - \varepsilon  - r \ .
\end{equation}
Solving (\ref{rEquation}) for $r$, we obtain,
\begin{equation}
r < \frac{{\lambda  - 1}} {{\lambda  + 1}}\varepsilon  \ .
\end{equation}
So the open ball $B_\varepsilon$ of radius $\frac{{ {\lambda  - 1}
}} {{ {\lambda + 1 } }}\varepsilon$ around $x_{\varepsilon}$ has
empty intersection with the set $M_{A,B,\lambda}$. Let $U_p$ be the
union of the balls $B_\varepsilon$ for $\varepsilon \in  \left( {0,d
\left( {p, B} \right)} \right)$,
\begin{equation}
U_p = \bigcup\limits_{\varepsilon  \in (0,d(p, B))} {B_\varepsilon}
\ .
\end{equation}
Any point $q$ in the open ball of radius $d(p, B)$ around $p$, such
that the angle $\angle qpv$ satisfies,
\begin{equation}
0 \leq \tan \angle qpv \leq \;\frac{{\lambda  - 1}} {{\lambda  + 1}}
\ ,
\end{equation}
is in $U_p$. We conclude that for any $p \in M$ there is a cone
$U_p$ of a constant angle based at $p$, such that $U \bigcap
M_{A,B,\lambda} = \phi$. In view of the \emph{Lebesgue density
theorem} (\emph{see. e.g.} \cite{mattila1999geometry}, Corollary
2.14), we obtain that $M_{A,B,\lambda}$ has zero Lebesgue measure.
\qed
\end{proof}
Note that it is easy to construct closed sets $A,B$ with non-empty
intersection, such that the claim of Lemma
\ref{EuclideanDistanceMeasure} does not hold for $\lambda = 1$.

From Lemma \ref{EuclideanDistanceMeasure} we conclude,
\begin{corollary}\label{ZeroLevelSetMeasure}
Let $A,B \in \mathfrak{J}_n$ and $t \in \mathbb{R}$. Define the set
$\Omega_{A,B,x}$ as,
\begin{equation}\label{OmegaSet}
\Omega_{A,B,x} = \left\{ {p:f_{A,B,x} \left( p \right) = 0} \right\}
.
\end{equation}
Then for $x \neq \frac{1}{2}$,
\begin{equation}\label{eqZeroMeasure}
\mu \left( \Omega_{A,B,x} \right) = 0 \ .
\end{equation}
\end{corollary}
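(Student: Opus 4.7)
The plan is to reduce Corollary \ref{ZeroLevelSetMeasure} to Lemma \ref{EuclideanDistanceMeasure} by partitioning $\mathbb{R}^n$ according to the position of $p$ relative to $A$ and $B$. Since $A,B \in \mathfrak{J}_n$ are Jordan measurable, $\mu(\partial A) = \mu(\partial B) = 0$, so I would first discard $\Omega_{A,B,x} \cap (\partial A \cup \partial B)$. The rest of $\mathbb{R}^n$ splits into four disjoint regions $R_1 = \operatorname{int}(A) \cap \operatorname{int}(B)$, $R_2 = \operatorname{int}(A) \setminus B$, $R_3 = \operatorname{int}(B) \setminus A$, and $R_4 = \mathbb{R}^n \setminus (A \cup B)$, on each of which $d_S(p,A) = \pm d(p, \partial A)$ and $d_S(p,B) = \pm d(p, \partial B)$, with signs determined by the region (positive in the interior of the set, negative outside).

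Next I would substitute these signs into $f_{A,B,x}(p) = 0$ to obtain, on each region, a linear relation $\alpha\, d(p,\partial A) = \beta\, d(p,\partial B)$ with $\alpha, \beta$ depending on $x$ and on the region. A short case analysis across $x \in (0,1)$, $x > 1$, and $x < 0$ shows that this relation either has no solution with both distances positive (as in $R_1, R_4$ for $x \in (0,1)$ and in $R_2, R_3$ for $x \notin [0,1]$), or rearranges to $d(p, \partial A) = \lambda\, d(p, \partial B)$ with $\lambda > 0$. The explicit values are $\lambda = (1-x)/x$ in $R_2$ and $\lambda = x/(1-x)$ in $R_3$ for $x \in (0,1)$, and $\lambda = (x-1)/x$ or $\lambda = (1-x)/(-x)$ in $R_1$ and $R_4$ for $x > 1$ or $x < 0$. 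One checks that $\lambda = 1$ occurs precisely at $x = 1/2$ in $R_2$ and $R_3$, so the hypothesis $x \neq 1/2$ ensures $\lambda \neq 1$ wherever a solution exists.

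Finally, because each region lies in the complement of $\partial A \cup \partial B$, I would invoke Lemma \ref{EuclideanDistanceMeasure} with the closed sets $\partial A$ and $\partial B$ substituted for $A$ and $B$, and with the region-specific $\lambda > 0, \lambda \neq 1$. Each contribution has Lebesgue measure zero, and summing over the four regions together with the negligible boundary piece yields $\mu(\Omega_{A,B,x}) = 0$. The main obstacle is pure bookkeeping: tracking the signs of $d_S$ and the value of $\lambda$ across all region-range combinations, and pinning down $x = 1/2$ as the unique forbidden value. Once this is done, the reduction to the Lemma is direct, with $\partial A$ and $\partial B$ playing the role of the Lemma's closed sets.
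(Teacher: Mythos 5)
Your proposal is correct and follows essentially the same route as the paper: off the null set $\partial A \cup \partial B$ one has $|d_S(p,A)| = d(p,\partial A)$ and $|d_S(p,B)| = d(p,\partial B)$, so $f_{A,B,x}(p)=0$ forces $|x|\,d(p,\partial A) = |1-x|\,d(p,\partial B)$, and Lemma \ref{EuclideanDistanceMeasure} applies to the closed sets $\partial A,\partial B$ with a ratio that equals $1$ exactly when $x = \frac{1}{2}$ --- the paper simply takes absolute values in one step instead of your four-region sign bookkeeping, which yields the same $\lambda$ up to a reciprocal. The only thing to add is the two trivial endpoint cases $x=0$ and $x=1$, omitted from your case split over $x \in (0,1)$, $x>1$, $x<0$, where $\Omega_{A,B,x}$ reduces to $\partial B$ or $\partial A$ and is null by Jordan measurability.
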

\begin{proof} For $x \ne 0,1$, set $\lambda = \left|\frac{x}{1-x}\right|$, then
\begin{equation*}
\Omega _{A,B,x}  \subseteq M_{\partial A,\partial B,\lambda }
\bigcup \partial A \bigcup \partial B \ ,
\end{equation*}
with $M_{\partial A,\partial B,\lambda}$ defined by
(\ref{mLambdaSet}). It follows from Lemma
\ref{EuclideanDistanceMeasure} and the assumptions $A,B \in
\mathfrak{J}_n$ and $x \neq \frac{1}{2}$, that $\mu \left( {\Omega
_{A,B,x} } \right) = 0$. For $x = 0$ or $x = 1$, $\Omega_{A,B,x}$
equals to $\partial B$ or $\partial A$ respectively, so $\mu \left(
\Omega_{A,B,x} \right) = 0$, by the assumption that $A,B \in
\mathfrak{J}_n$. \qed
\end{proof}
Corollary \ref{ZeroLevelSetMeasure}, does not treat the case $x =
\frac {1}{2}$. Indeed, it is not difficult to give two sets $A,B \in
\mathfrak{J}_n$ such that $\mu \left( \Omega_{A,B,\frac{1}{2}}
\right) > 0$, see Figure \ref{fig:Triangle} for an example of such
two sets.

Since for the continuity of $h(x)$ at $x = \frac{1}{2}$, we need the
condition $\mu \left( \Omega_{A,B,\frac{1}{2}} \right) = 0$, we
introduce the following relation between two sets.
\begin{definition}
$A,B \in \mathfrak{J}_n$ satisfy the zero-measure condition if $\mu
\left( \Omega_{A,B,\frac{1}{2}} \right) = 0$.
\end{definition}
\begin{remark}\label{RemarkZeroMeasureCondtion}
In view of Corollary \ref{ZeroLevelSetMeasure}, if $A,B$ satisfy the
zero-measure condition, then
\begin{equation}
\mu \left( \Omega_{A,B,x} \right) = 0 \ ,
\end{equation}
for any $x$.
\end{remark}

\begin{remark}\label{Nongenerity}
For a Lebesgue measurable function $f : \mathbb{R}^n \to
\mathbb{R}$, the set of values  $c$ such that,
\begin{equation*}
\mu \left( {\left\{ {p:f\left( p \right) = c} \right\}} \right) > 0
\ ,
\end{equation*}
has zero measure in view of Fubini's theorem  (\emph{see e.g.}
\cite{halmos1974measure}, Section 36). Consequently if the level set
$\left\{ {p : f_{A,B,\frac{1} {2}} \left( p \right) = 0 } \right\}$
has non-zero measure, one can always choose an arbitrarily small
$\varepsilon > 0$ such that, $\mu \left( {\left\{ {p :
f_{A,B,\frac{1} {2}} \left( p \right) = \varepsilon } \right\}}
\right) = 0 $. Therefore, the case of $A,B$ satisfying the
zero-measure condition is generic, while the case that this
condition is not satisfied is degenerate.
\end{remark}

Next we adapt to our context a basic result from probability theory,
\begin{lemma}\label{FunctionContinuityLemma}
Let $A,B \in \mathfrak{J}_n$, $B \subset A$ and $\mu \left( {\Omega
_{A,B,x^* } } \right) = 0$, then the function $h(x)$ defined by
(\ref{hFunction}) is continuous at $x^*$.
\end{lemma}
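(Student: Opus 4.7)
The plan is to exploit the monotonicity of the family $S_x := xA \widetilde\bigoplus (1-x)B$ in $x$ (which we have from Property \ref{AuxiliaryInclusionProperty} of Lemma \ref{AuxiliaryAverageProperties}, since $B \subset A$), together with the fact that $f_{A,B,x}(p) = d_S(p,B) + x\bigl(d_S(p,A) - d_S(p,B)\bigr)$ is \emph{linear}, hence continuous, in $x$ for each fixed $p$. Since $h$ is then monotone non-decreasing, it is enough to establish right- and left-continuity at $x^\ast$ separately; both will be obtained from the standard continuity-of-measure theorems once we identify the correct monotone limits of $S_x$.

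For right-continuity I would argue $\bigcap_{x>x^\ast} S_x = S_{x^\ast}$. The inclusion $\supseteq$ is monotonicity. For $\subseteq$, if $p$ belongs to every $S_x$ with $x>x^\ast$, then $f_{A,B,x}(p)\geq 0$ for all such $x$, and letting $x\downarrow x^\ast$, continuity in $x$ gives $f_{A,B,x^\ast}(p)\geq 0$, i.e.\ $p\in S_{x^\ast}$. Since all $S_x$ for $x$ in a bounded neighbourhood of $x^\ast$ lie inside a common bounded set (this comes out of the proof of Property \ref{AuxiliaryBoundedProperty}, where the bounding set $(A\cup B)\cup \zeta_{A,B,x}$ stays in a fixed bounded region as $x$ varies in a compact interval), they have uniformly finite measure, so downward continuity of $\mu$ yields $h(x)\downarrow h(x^\ast)$.

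For left-continuity I would instead look at $U := \bigcup_{x<x^\ast} S_x \subseteq S_{x^\ast}$ and analyse the gap $S_{x^\ast}\setminus U$. A point $p$ in that gap satisfies $f_{A,B,x^\ast}(p)\geq 0$ but $f_{A,B,x}(p)<0$ for every $x<x^\ast$; by continuity in $x$ this forces $f_{A,B,x^\ast}(p)=0$, i.e.\ $p\in\Omega_{A,B,x^\ast}$. The hypothesis $\mu(\Omega_{A,B,x^\ast})=0$ then gives $\mu(U)=\mu(S_{x^\ast})$, and upward continuity of $\mu$ along the increasing family $\{S_x\}_{x<x^\ast}$ yields $h(x)\uparrow h(x^\ast)$.

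The main point where care is required is the left-continuity step: the zero-measure hypothesis on $\Omega_{A,B,x^\ast}$ is exactly what kills the potential jump, and without it one really can have $\mu(S_{x^\ast}) > \mu(U)$ (as illustrated by the degenerate case $x^\ast = \tfrac12$ discussed before the statement). Everything else is bookkeeping with monotone classes and the linearity in $x$ of $f_{A,B,x}(p)$.
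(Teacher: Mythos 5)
Your proposal is correct and follows essentially the same route as the paper: monotonicity of $x\mapsto xA\widetilde\bigoplus(1-x)B$ from Property \ref{AuxiliaryInclusionProperty}, continuity of the Lebesgue measure from below and from above (the latter justified by the boundedness in Property \ref{AuxiliaryBoundedProperty}), with the hypothesis $\mu(\Omega_{A,B,x^*})=0$ absorbing exactly the gap $S_{x^*}\setminus\bigcup_{x<x^*}S_x$ in the left limit. Your explicit use of the linearity of $f_{A,B,x}(p)$ in $x$ to identify that gap with a subset of $\Omega_{A,B,x^*}$ is a detail the paper leaves implicit, but the argument is the same.
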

\begin{proof}
To see that $h(x)$ is left-continuous at $x^*$, let $x_n \rightarrow
x^*$, $x_{n} \leq x_{n+1}$. By Property
\ref{AuxiliaryInclusionProperty} of Lemma
\ref{AuxiliaryAverageProperties},
\begin{equation*}
x_n A \widetilde \bigoplus \left( {1 - x_n } \right)B \subseteq
x_{n+1} A \widetilde \bigoplus \left( {1 - x_{n+1} } \right)B \ ,
\end{equation*}
so
\begin{equation*}
x^* A{\text{ }}\widetilde \bigoplus \left( {1 - x^* } \right)B =
\left( {\bigcup\limits_{n = 1}^\infty  {x_n A \widetilde\bigoplus
\left( {1 - x_n } \right)B} } \right) \bigcup{\Omega _{A,B,x^* } } \
.
\end{equation*}
Consequently by the continuity from below of the Lebesgue measure
(\emph{see e.g.} \cite{jones2001lebesgue}, Chapter 2, Section B, M5)
and by the assumption $\mu \left( {\Omega _{A,B,x^* } } \right) =
0$,
\begin{equation*}
h\left( {x^* } \right) = \mathop {\lim }\limits_{n \to \infty }
h\left( {x_n } \right) \ .
\end{equation*}
To obtain that $h(x)$ is right-continuous at $x^*$, let $x_n
\rightarrow x$, $x_{n} \geq x_{n+1}$. Then in view of Property
\ref{AuxiliaryInclusionProperty} of Lemma
\ref{AuxiliaryAverageProperties},
\begin{equation*}
x^* A \widetilde  \bigoplus \left( {1 - x^* } \right)B =
{\bigcap\limits_{n = 1}^\infty  {x_n A{\text{ }}\widetilde
\bigoplus \left( {1 - x_n } \right)B} } \ .
\end{equation*}
Using Property \ref{AuxiliaryBoundedProperty} of Lemma
\ref{AuxiliaryAverageProperties}, by the continuity from above of
the Lebesgue measure (\emph{see e.g.} \cite{jones2001lebesgue},
Chapter 2, Section B, M6)  we obtain that $h(x)$ is right-continuous
at $x^*$. \qed
\end{proof}

\begin{figure}
\begin{center}
\includegraphics[scale=0.5]{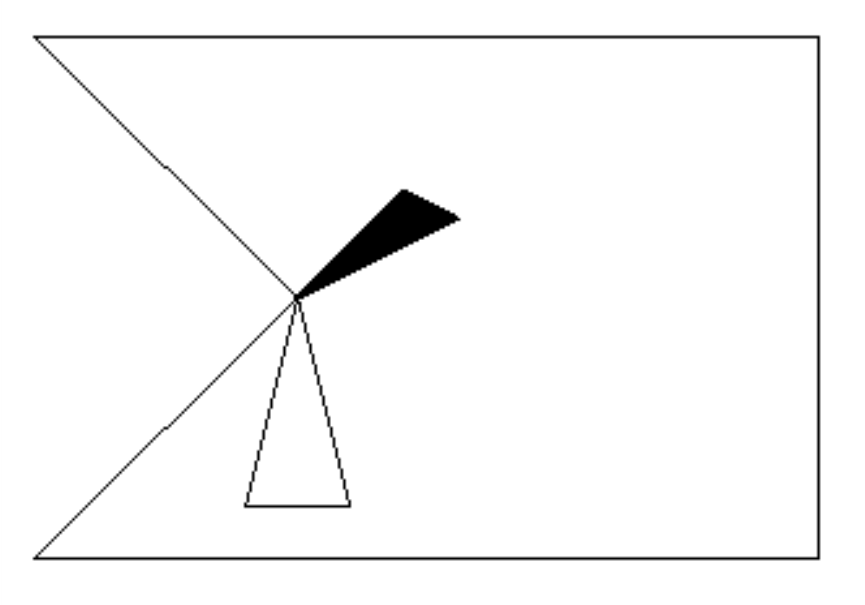}
\end{center}
\caption{The pentagon and the lower triangle represent the
boundaries of the sets $A$ and $B$ respectively. The black set is
contained in $\Omega _{A,B,\frac{1}{2}} $.} \label{fig:Triangle}
\end{figure}
In view of Corollary \ref{ZeroLevelSetMeasure}, by Remark
\ref{RemarkZeroMeasureCondtion} and Lemma
{\ref{FunctionContinuityLemma}}, we arrive at
\begin{corollary}\label{FunctionContinuity}
If $A,B \in \mathfrak{J}_n$ satisfy the zero-measure condition, then
$h(x)$ is continuous everywhere. Otherwise $h(x)$ is continuous at
all $x$ except at $x = \frac{1}{2}$.
\end{corollary}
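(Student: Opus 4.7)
The plan is to extend the argument of Lemma \ref{FunctionContinuityLemma} from its stated setting ($B \subseteq A$) to arbitrary $A,B \in \mathfrak{J}_n$, which is exactly what the corollary requires. The natural device is to split $\mathbb{R}^n$ into two pieces on which $x \mapsto f_{A,B,x}(p)$ is monotone with opposite senses:
\[
R_A = \{p \in \mathbb{R}^n : d_S(p,A) \geq d_S(p,B)\}, \qquad R_B = \mathbb{R}^n \setminus R_A.
\]
Rewriting $f_{A,B,x}(p) = f_{A,B,x^*}(p) + (x - x^*)\bigl(d_S(p,A) - d_S(p,B)\bigr)$, we see that $x \mapsto f_{A,B,x}(p)$ is non-decreasing on $R_A$ and strictly decreasing on $R_B$. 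Writing $D_x = xA\widetilde\bigoplus(1-x)B = \{p : f_{A,B,x}(p) \geq 0\}$, we then have
\[
h(x) = \mu(D_x \cap R_A) + \mu(D_x \cap R_B),
\]
so it suffices to prove continuity of each summand at any $x^*$ with $\mu(\Omega_{A,B,x^*}) = 0$.

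Fix such an $x^*$ together with a monotone sequence $x_n \nearrow x^*$. On $R_A$, the sets $D_{x_n}\cap R_A$ are nested increasingly and their union differs from $D_{x^*}\cap R_A$ only by a subset of $\Omega_{A,B,x^*}$, exactly as in the proof of Lemma \ref{FunctionContinuityLemma}; continuity from below of $\mu$ then yields $\mu(D_{x_n}\cap R_A)\to\mu(D_{x^*}\cap R_A)$. On $R_B$, the sets $D_{x_n}\cap R_B$ are nested decreasingly, and the strict monotonicity of $f_{A,B,x}(p)$ in $x$ forces $\bigcap_n(D_{x_n}\cap R_B) = D_{x^*}\cap R_B$ exactly, with no exceptional set. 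Finite measure (needed for continuity from above) is supplied by Property \ref{AuxiliaryBoundedProperty} of Lemma \ref{AuxiliaryAverageProperties}. The case $x_n\searrow x^*$ is handled symmetrically, proving continuity of $h$ at $x^*$.

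The two cases in the statement now follow from the earlier measure-zero results: Corollary \ref{ZeroLevelSetMeasure} yields $\mu(\Omega_{A,B,x^*}) = 0$ at every $x^* \neq \tfrac{1}{2}$ unconditionally, while Remark \ref{RemarkZeroMeasureCondtion} upgrades this to every $x^*$ under the zero-measure condition. The main obstacle I anticipate is precisely the loss of the inclusion $B \subseteq A$, which would have given global monotonicity of $D_x$ in $x$ via Property \ref{AuxiliaryInclusionProperty}; the decomposition into $R_A$ and $R_B$ is the key device that restores a piecewise monotone structure and lets the continuity-from-above/below arguments proceed as before.
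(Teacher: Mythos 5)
Your proof is correct, and its final assembly --- feeding the pointwise continuity statement into Corollary \ref{ZeroLevelSetMeasure} and Remark \ref{RemarkZeroMeasureCondtion} --- is exactly the paper's one-line derivation of the corollary. Where you genuinely depart from the paper is in re-proving, and extending, Lemma \ref{FunctionContinuityLemma}: the paper states that lemma only for $B \subset A$, where Property \ref{AuxiliaryInclusionProperty} of Lemma \ref{AuxiliaryAverageProperties} makes the family $xA\,\widetilde\bigoplus\,(1-x)B$ globally monotone in $x$, and then cites it to obtain the corollary for general $A,B$ without comment. Your splitting into $R_A$ and $R_B$ closes that gap: it restores a piecewise monotone structure for $x \mapsto \left\{ p : f_{A,B,x}(p) \geq 0 \right\}$, so continuity from below and from above of the Lebesgue measure apply on each piece separately, with the exceptional set on the expanding piece contained in $\Omega_{A,B,x^*}$ and with Property \ref{AuxiliaryBoundedProperty} supplying the finite-measure hypothesis needed on the shrinking piece. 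The upshot is that your argument proves the corollary as literally stated, for arbitrary $A,B \in \mathfrak{J}_n$, whereas the paper's chain of citations covers only the nested case --- which, to be fair, is the only case in which the corollary is subsequently invoked, since the measure average is built from simply different (hence nested) pairs. One small remark: on $R_B$ the identity $\bigcap_n \left( D_{x_n}\cap R_B \right) = D_{x^*}\cap R_B$ for your sets $D_x$ needs only the non-strict monotonicity together with the continuity of the affine map $x\mapsto f_{A,B,x}(p)$ (a sequence of non-negative values has a non-negative limit); strictness plays no essential role there.
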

Finally, we discuss the Jordan measurability of $xA \widetilde
\bigoplus (1- x) B$,
\begin{corollary}\label{AuxiliaryJordan} If $A,B \in \mathfrak{J}_n$ satisfy the zero-measure condition, then $xA \widetilde \bigoplus (1- x) B$ is Jordan measurable for all $x$.
Otherwise it is Jordan measurable for all $x$ except may be for $x =
\frac{1}{2}$.
\end{corollary}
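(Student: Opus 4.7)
The plan is to reduce Jordan measurability of the distance average to an already-established zero-measure statement for $\Omega_{A,B,x}$. Recall that by Property \ref{AuxiliaryBoundedProperty} of Lemma \ref{AuxiliaryAverageProperties}, the set $S_x := xA \widetilde\bigoplus (1-x)B$ is bounded, so the only thing to verify is $\mu(\partial S_x) = 0$.

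The key observation is that $f_{A,B,x}$ is continuous on $\mathbb{R}^n$. This is stated right after Definition \ref{DistanceAverageDef}, and it gives us that $S_x = \{p : f_{A,B,x}(p) \geq 0\}$ is closed, while its interior contains the open set $\{p : f_{A,B,x}(p) > 0\}$. Therefore
\[
\partial S_x \;\subseteq\; S_x \setminus \{p : f_{A,B,x}(p) > 0\} \;=\; \{p : f_{A,B,x}(p) = 0\} \;=\; \Omega_{A,B,x}.
\]
So it suffices to show that $\mu(\Omega_{A,B,x}) = 0$ in each case. (For $x$ such that the average was extended to include the empty set via the "center point" construction, the same argument goes through using continuity of the corresponding defining function; this should be checked but is routine.)

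Now one simply invokes the earlier results. If $A,B$ satisfy the zero-measure condition, then by Remark \ref{RemarkZeroMeasureCondtion} we have $\mu(\Omega_{A,B,x}) = 0$ for every $x \in \mathbb{R}$, so $S_x$ is Jordan measurable for every $x$. If the zero-measure condition fails, then Corollary \ref{ZeroLevelSetMeasure} still guarantees $\mu(\Omega_{A,B,x}) = 0$ for every $x \neq \tfrac{1}{2}$, and so $S_x$ is Jordan measurable for all such $x$; the value $x = \tfrac{1}{2}$ is the only one left uncovered, matching the statement of the corollary.

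The main (and essentially only) substantive step is the inclusion $\partial S_x \subseteq \Omega_{A,B,x}$, which hinges on the continuity of $f_{A,B,x}$; everything else is a bookkeeping application of Corollary \ref{ZeroLevelSetMeasure} and Remark \ref{RemarkZeroMeasureCondtion}. No real obstacle is expected here, since the heavy lifting (Lemma \ref{EuclideanDistanceMeasure} and its corollary) has already been done.
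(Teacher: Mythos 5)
Your proposal is correct and follows essentially the same route as the paper: the paper's proof is exactly the inclusion $\partial\bigl(xA\,\widetilde\bigoplus\,(1-x)B\bigr)\subseteq\Omega_{A,B,x}$ followed by an appeal to Corollary \ref{ZeroLevelSetMeasure} and Remark \ref{RemarkZeroMeasureCondtion}. You merely spell out the justification of that inclusion (closedness of the superlevel set and openness of the strict superlevel set of the continuous $f_{A,B,x}$), which the paper asserts directly from Definition \ref{DistanceAverageDef}.
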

\begin{proof}
By Definition \ref{DistanceAverageDef},
\begin{equation}
 {\partial \left( {xA \widetilde\bigoplus \left( {x - t} \right)B} \right)}  \subseteq  \Omega _{A,B,x}  \ ,
\end{equation}
which in view of Corollary \ref{ZeroLevelSetMeasure} and Remark
\ref{RemarkZeroMeasureCondtion}, leads to the claim of the
corollary. \qed
\end{proof}
However the distance average $tA \widetilde \bigoplus \left(1 -
t\right)B$ is not necessarily regular compact, even when $A$ and $B$
are.

\section{Construction of the "measure average" of sets}\label{sectionMeasureAverage}
Our construction aims to achieve several important properties of the
measure average, denoted by $tA \bigoplus \left( {1 - t} \right)B$.
We use these properties later on in the analysis of the subdivision
methods based on the measure average. For $A,B \in \mathfrak{J}_n$
and an averaging parameter $t \in \mathbb{R}$, the desired
properties of the measure average are
\begin{listprop}\label{PropertiesList}\textbf{}
\begin{enumerate}
\item \label{PropClousure} $tA \bigoplus \left( {1 - t} \right)B \in \mathfrak{J}_n$ (closure property)
\item \label{PropThroughTheEnds} $0A \bigoplus 1B  =  B$, $1A  \bigoplus 0B  =  A$ (interpolation property)
\item \label{PropSubMetric} $d_\mu  \left( {sA \bigoplus \left( {1 - s} \right)B,tA \bigoplus \left( {1 - t} \right)B} \right) \leq \left| {t - s} \right|d_\mu  \left( {A,B} \right)$ (submetric property) \\\\
In addition for $s,t \in \left[0,1\right]$,
\item \label{PropMeasure} $\mu \left( {tA \bigoplus \left( {1 - t} \right)B} \right) = t\mu \left( A \right) + \left( {1 - t} \right)\mu \left( B \right)$ (measure property)
\item \label{PropMetric} $d_\mu  \left( {sA \bigoplus \left( {1 - s} \right)B,tA \bigoplus \left( {1 - t} \right)B} \right) = \left| {t - s} \right|d_\mu  \left( {A,B} \right)$
(metric property)
\item \label{PropIncusion} $\operatorname{ci}\left(A \bigcap B\right) \subseteq tA \bigoplus \left( {1 - t} \right)B\subseteq A \bigcup B $ (inclusion property)
\end{enumerate}
\end{listprop}
The above properties are analogous to those of weighted averages
between non-negative numbers, defined by $\max \left\{ {0,tp +
\left( {1 - t} \right)q} \right\}$, for $p,q \in \mathbb{R}_+$ and
$t \in \mathbb{R}$. In this analogy, the measure of a set, the
measure of the symmetric difference of two sets ($d_\mu  \left( {
\cdot , \cdot } \right)$), and the relation $\subseteq$  are
replaced by the absolute value of a number, the absolute value of
the difference of two numbers and the relation $\leq$ respectively.

The measure average is constructed in three steps, each based on the
previous.
\subsection{The measure average of "simply different" sets}
We begin with the simple case of $A,B \in \mathfrak{J}_n$ such that
$B \subset A$ and $A \setminus B$ consists of only one
\emph{connected component}. We call two such sets \emph{simply
different}. First we assume that the sets $A,B$ satisfy the
zero-measure condition. In this case, the \emph{measure average} is
a reparametrization of the distance average, so that the measure of
the resulting set is as close as possible to the average of the
measures of $A,B$,
\begin{definition}\label{SimpleMeasureAverageSat}
Let $A,B$ be simply different sets satisfying the zero-measure
condition. The measure average of $A,B$ with the averaging parameter
$t \in \mathbb{R}$ is,
\newpage
\begin{equation}\label{SimpDiffMeasureAverage}
tA \bigoplus \left( {1 - t} \right)B = {\operatorname{ci}}\left(
{g\left( t \right)A \widetilde  \bigoplus \left( {1 - g\left( t
\right)} \right)B} \right)
 \ ,
\end{equation}
where $g(t)$ is any parameter in the collection,
\begin{equation}\label{MeasureReparam}
\left\{ {x:x = \mathop {\mathop {\arg \min }\limits_{\xi  \in \left[
{ - N,N} \right]} \left| {h\left( \xi  \right) - \left( {t\mu \left(
A \right) + \left( {1 - t} \right)\mu \left( B \right)} \right)}
\right|}\limits_{} } \right\} \ .
\end{equation}
Here $h$ is defined in (\ref{hFunction}) and $N >> 1$ is a large
positive number.
\end{definition}
The zero-measure condition satisfied by $A,B$ and Corollary
\ref{FunctionContinuity} imply the continuity of $h$. Therefore the
$\arg\min$ in (\ref{MeasureReparam}) is well defined. By Corollary
\ref{AuxiliaryJordan} and the relation (\ref{jordanCIA}),
\begin{equation}
\mu \left( {tA \bigoplus \left( {1 - t} \right)B} \right) = h\left(
{g\left( t \right)} \right) \ .
\end{equation}
The measure average in Definition \ref{SimpleMeasureAverageSat} is a
regular compact set. Since $h(x_1) = h(x_2)$, for any two parameters
$x_1, x_2$ in the collection (\ref{MeasureReparam}), the
corresponding averages defined using either $g(t) = x_1$ or $g(t) =
x_2$ in (\ref{SimpDiffMeasureAverage}) have the same measure. In
addition, by Property \ref{AuxiliaryInclusionProperty} of Lemma
\ref{AuxiliaryAverageProperties}, one of these sets is necessarily
contained in the other. Therefore, in view of
(\ref{IncludedDistance}) and because both sets are regular compact,
\begin{equation*}
x_1 A\widetilde \bigoplus \left( {1 - x_1 } \right)B = x_2
A\widetilde \bigoplus \left( {1 - x_2 } \right)B \ .
\end{equation*}
So any parameter in the collection (\ref{MeasureReparam}) can be
used in (\ref{SimpDiffMeasureAverage}).

\begin{lemma}\label{SimpDiffMeasureAverageProperties}
The measure average of simply different sets satisfying the
zero-measure condition has Properties
\ref{PropClousure}-\ref{PropIncusion} in List of Properties
\ref{PropertiesList}.
\end{lemma}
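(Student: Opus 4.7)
The plan is to verify Properties \ref{PropClousure}--\ref{PropIncusion} by exploiting two structural facts about the measure function $h$ from (\ref{hFunction}) under our hypotheses: (i) $h$ is continuous on $\mathbb{R}$ by Corollary \ref{FunctionContinuity}, since the zero-measure condition holds; (ii) because $B \subseteq A$, Property \ref{AuxiliaryInclusionProperty} of Lemma \ref{AuxiliaryAverageProperties} gives that $xA \widetilde{\bigoplus}(1-x)B$ is non-decreasing in $x$ for the set-inclusion order, so $h$ is non-decreasing on $\mathbb{R}$. In particular $h(0)=\mu(B)$ and $h(1)=\mu(A)$, and by the intermediate value theorem every value in $[\mu(B),\mu(A)]$ is attained.

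I would then dispatch the ``within $[0,1]$'' properties quickly. For Property \ref{PropClousure} (closure), Corollary \ref{AuxiliaryJordan} and Property \ref{AuxiliaryBoundedProperty} of Lemma \ref{AuxiliaryAverageProperties} say that the inner distance average is bounded and Jordan measurable, so its closure-of-interior lies in $\mathfrak{J}_n$ and has the same measure by (\ref{jordanCIA}). For Property \ref{PropThroughTheEnds} (interpolation), the targets for $t=0$ and $t=1$ are exactly $h(0)=\mu(B)$ and $h(1)=\mu(A)$, so I may choose $g(0)=0$, $g(1)=1$ and invoke Property \ref{AuxiliaryThroughTheEnds} of Lemma \ref{AuxiliaryAverageProperties} together with regularity of $A$ and $B$. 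For Property \ref{PropMeasure} (measure) with $t \in [0,1]$, the target $t\mu(A)+(1-t)\mu(B)$ lies in $[h(0),h(1)]$, so by (i)--(ii) the minimum in (\ref{MeasureReparam}) equals $0$ and $\mu(tA\bigoplus(1-t)B)=t\mu(A)+(1-t)\mu(B)$. For Property \ref{PropIncusion} (inclusion), the same $g(t)\in[0,1]$ and Property \ref{AuxiliaryMiddleProperty} of Lemma \ref{AuxiliaryAverageProperties} yield $B \subseteq g(t)A\widetilde{\bigoplus}(1-g(t))B \subseteq A$; the $\operatorname{ci}$ operation preserves this, since it fixes the regular compact sets $A$ and $B$.

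The metric and submetric estimates I would handle together. Assuming $s \le t$, write $T_s = \mu(B)+s\,d_\mu(A,B)$ and $T_t=\mu(B)+t\,d_\mu(A,B)$, so $T_t-T_s=(t-s)d_\mu(A,B)$. By monotonicity of $h$ I may select $g(s)\le g(t)$, whence Property \ref{AuxiliaryInclusionProperty} of Lemma \ref{AuxiliaryAverageProperties} and (\ref{IncludedDistance}) give
\begin{equation*}
d_\mu\bigl(sA\bigoplus(1-s)B,\; tA\bigoplus(1-t)B\bigr) = h(g(t))-h(g(s)).
\end{equation*}
When $s,t \in [0,1]$, Property \ref{PropMeasure} already proved shows $h(g(t))-h(g(s))=T_t-T_s$, giving the metric Property \ref{PropMetric}. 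For arbitrary $s,t \in \mathbb{R}$, I would split into cases based on whether each target falls inside the attainable range $[h(-N),h(N)]$ and use the argmin condition: if a target is attainable, $h(g(\cdot))$ equals it exactly; if it overshoots $h(N)$ (resp.\ undershoots $h(-N)$), the clipped value $h(N)$ (resp.\ $h(-N)$) can only decrease the gap $h(g(t))-h(g(s))$ below $T_t-T_s$.

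The main obstacle is precisely this last case analysis for the submetric Property \ref{PropSubMetric}: the clipping to $[-N,N]$ in (\ref{MeasureReparam}) must be shown not to inflate the distance beyond $|t-s|\,d_\mu(A,B)$. Everything else is a transparent consequence of monotonicity and continuity of $h$, plus regularity of $A$, $B$.
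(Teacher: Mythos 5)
Your proposal is correct and follows essentially the same route as the paper: monotonicity and continuity of $h$, the observation that $\mu(tA\bigoplus(1-t)B)$ is the clipping of the linear target $t\mu(A)+(1-t)\mu(B)$ to the attainable range $[h(-N),h(N)]$, the nesting $sA\bigoplus(1-s)B\subseteq tA\bigoplus(1-t)B$ for $s\le t$, and the identity (\ref{IncludedDistance}) to convert measure differences into $d_\mu$. The ``main obstacle'' you flag is resolved exactly as you indicate (and as in the paper's relation (\ref{SimpleMeasureTogether})): the clip map is monotone and $1$-Lipschitz, so it cannot inflate $h(g(t))-h(g(s))$ beyond $|t-s|\,d_\mu(A,B)$.
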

\begin{proof}
The closure property follows from Definition
\ref{SimpleMeasureAverageSat}. The interpolation property follows
from Definition \ref{SimpleMeasureAverageSat} and from Property
\ref{AuxiliaryThroughTheEnds} in Lemma
\ref{AuxiliaryAverageProperties}.

Notice that since $B \subset A$, Property
\ref{AuxiliaryInclusionProperty} of Lemma
\ref{AuxiliaryAverageProperties} implies that $h(x)$ defined in
(\ref{hFunction}) is monotone non-decreasing. To obtain the
submetric property, we denote $m_{A,B}  = h(-N)$ and  $M_{A,B} =
h(N)$, where  $\left[-N,N\right]$ is the domain used in
(\ref{MeasureReparam}). Since the average $\bigoplus$ is a
reparametrization of the average $\widetilde \bigoplus$,
\begin{equation}\label{SimpleMeasureIsBounded}
\mu \left( {tA \bigoplus \left( {1 - t} \right)B} \right) \in \left[
{m_{A,B} ,M_{A,B} } \right] \ .
\end{equation}
By the continuity of $h$, we obtain from Definition
\ref{SimpleMeasureAverageSat}, that for any $t$ satisfying,
\begin{equation}\label{SimpleMeasureEqualityCondition}
m_{A,B}  \leq t\mu \left( A \right) + \left( {1 - t} \right)\mu
\left( B \right) \leq M_{A,B} \ ,
\end{equation}
we have,
\begin{equation}\label{SimpleMeasureEquality}
\mu \left( {tA \bigoplus \left( {1 - t} \right)B} \right) = t\mu
\left( A \right) + \left( {1 - t} \right)\mu \left( B \right) \ .
\end{equation}
Summarizing (\ref{SimpleMeasureIsBounded}) and
(\ref{SimpleMeasureEquality}), we arrive at
\begin{equation}\label{SimpleMeasureTogether}
\mu \left( {tA \bigoplus \left( {1 - t} \right)B} \right) = \left\{
{\begin{array}{*{20}c}
   \begin{gathered}
  m_{A,B},  \hfill \\
  t\mu \left( A \right) + \left( {1 - t} \right)\mu \left( B \right) \hfill, \\
  M_{A,B},  \hfill \\
\end{gathered}  & \begin{gathered}
  \qquad t\mu \left( A \right) + \left( {1 - t} \right)\mu \left( B \right) \leq m_{A,B}  \hfill \\
  \qquad t\mu \left( A \right) + \left( {1 - t} \right)\mu \left( B \right) \in \left( {m_{A,B} ,M_{A,B} } \right) \hfill \\
  \qquad t\mu \left( A \right) + \left( {1 - t} \right)\mu \left( B \right) \geq M_{A,B}  \hfill \\
\end{gathered}   \\
 \end{array} } \right.
\end{equation}
Assume without loss of generality that $s \leq t$, then from the
monotonicity of $h$ and Definition \ref{SimpleMeasureAverageSat},
$g(s) \leq g(t)$.  By Property \ref{AuxiliaryInclusionProperty} of
Lemma \ref{AuxiliaryAverageProperties} we have,
\begin{equation}\label{SimpDiffInclusionEq}
sA \bigoplus \left( {1 - s} \right)B \subseteq tA \bigoplus \left(
{1 - t} \right)B \ , \ s \leq t \ .
\end{equation}
From (\ref{SimpleMeasureTogether}) we obtain,
\begin{equation*}
\left| {\mu \left( {tA \bigoplus \left( {1 - t} \right)B} \right) -
\mu \left( {sA \bigoplus \left( {1 - s} \right)B} \right)} \right|
\leqslant \left| {t - s} \right|\left( {\mu \left( A \right) - \mu
\left( B \right)} \right) \ ,
\end{equation*}
which in view of (\ref{SimpDiffInclusionEq}) and
(\ref{IncludedDistance}) leads to the proof of the submetric
property.

To prove the measure property, we have to show that for $t \in
[0,1]$ (\ref{SimpleMeasureEqualityCondition}) holds. Observe from
Property \ref{AuxiliaryThroughTheEnds} of Lemma
\ref{AuxiliaryAverageProperties} that,
\begin{equation}
h\left( 0 \right) = \mu \left( B \right),h\left( 1 \right) = \mu
\left( A \right) \ ,
\end{equation}
thus by the monotonicity of $h$,
\begin{equation*}
m_{A,B}  \leq \mu \left( B \right) \leq \mu \left( A \right) \leq
M_{A,B} \ ,
\end{equation*}
and for $t \in [0,1]$,
\begin{equation*}
m_{A,B}  \leq \mu \left( B \right) \leq t\mu \left( A \right) +
\left( {1 - t} \right)\mu \left( B \right) \leq \mu \left( A \right)
\leq M_{A,B} .
\end{equation*}
The metric property follows from the measure property,
(\ref{SimpDiffInclusionEq}) and (\ref{IncludedDistance}). Finally
the inclusion property, follows from the assumption $B \subset A$,
the interpolation property and (\ref{SimpDiffInclusionEq}). \qed
\end{proof}

Next we define the measure average in case of simply different sets
$A,B$ that do not satisfy the zero-measure condition. In view of
Remark \ref{Nongenerity}, this case is non-generic and in
applications can be resolved by a small perturbation of the input
sets. For completeness we provide a formal construction treating
this case.

An $r$-\emph{offset} of a set $B$ with $r \geq 0$, is defined as,
\begin{equation*}
O\left( {B,r} \right) = \left\{ {p:d \left( {p,B} \right) \leq r}
\right\} \ .
\end{equation*}
In case of $A,B$ not satisfying the zero-measure condition, we
intersect the set $A$ with an $r$-\emph{offset} of $B$, where $r$ is
chosen so that the measure of the intersection equals the average of
the measures of $A,B$.
\begin{definition}\label{SimpleMeasureAverageNotSat}
Let $A,B$ be simply different sets that do not satisfy the
zero-measure condition. For $t \in [0,1]$, the measure average of
$A,B$ is defined by,
\begin{equation}\label{SimpleSpecial}
tA \bigoplus \left( {1 - t} \right)B = \operatorname{ci} \left(
O\left( {B,r_{A,B} (t)} \right) \bigcap A \right),
\end{equation}
where $r_{A,B} (t)$ is any number in the collection,
\begin{equation}\label{BallMeasure}
\left\{ {r:\mu \left( {O\left( {B,r} \right)\bigcap A } \right) =
t\mu \left( A \right) + \left( {1 - t} \right)\mu \left( B \right)}
\right\} \ ,
\end{equation}
which is not empty, as is proved in Lemma
\ref{SimpDiffMeasurePropsSpecial}. For $t \notin [0,1]$ the measure
average is defined as in Definition \ref{SimpleMeasureAverageSat}.
\end{definition}
Note that for any $r_1,r_2$ in the collection (\ref{BallMeasure}),
\begin{equation*}
{\operatorname{ci}}\left( {O\left( {B,r_1 } \right)\bigcap A }
\right) = {\operatorname{ci}}\left( {O\left( {B,r_2 } \right)\bigcap
A } \right) \ ,
\end{equation*}
so any $r$ in the collection (\ref{BallMeasure}) can be used in
(\ref{SimpleSpecial}).

In the next lemma we show that the average defined above has the
desired properties,
\begin{lemma}\label{SimpDiffMeasurePropsSpecial}
The measure average of simply different sets in Definition
\ref{SimpleMeasureAverageNotSat} is well defined and satisfies
Properties \ref{PropClousure}-\ref{PropIncusion} in List of
Properties \ref{PropertiesList}.
\end{lemma}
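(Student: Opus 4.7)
My approach is to reduce everything to properties of the scalar function $h_B(r):=\mu(O(B,r)\cap A)$ for $r\geq 0$, which plays here the same role that $h$ played in Lemma~\ref{SimpDiffMeasureAverageProperties}.

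First I would establish the well-definedness of $r_{A,B}(t)$, i.e., the non-emptiness of the collection in (\ref{BallMeasure}). Because $B$ is closed and $B\subseteq A$, $O(B,0)\cap A = B$, so $h_B(0)=\mu(B)$; and for $r\geq R:=\max_{a\in A}d(a,B)$, $O(B,r)\supseteq A$, so $h_B(r)=\mu(A)$. Monotonicity of $h_B$ is immediate from the inclusion $O(B,r_1)\subseteq O(B,r_2)$ for $r_1\leq r_2$. Right-continuity of $h_B$ follows from continuity from above of the Lebesgue measure together with $O(B,r^*)=\bigcap_{r_n\downarrow r^*}O(B,r_n)$; left-continuity at $r^*$ requires $\mu(\{d(\cdot,B)=r^*\}\cap A)=0$, which by Fubini applied to the Lipschitz function $d(\cdot,B)$ fails at most on a countable set of $r^*$. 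Combined with monotonicity this suffices to show that $h_B$ maps $[0,\infty)$ onto $[\mu(B),\mu(A)]$, so for every $t\in[0,1]$ some $r$ achieves $h_B(r)=t\mu(A)+(1-t)\mu(B)$. Independence of the particular choice of $r$ in (\ref{BallMeasure}) follows because any two $r_1\leq r_2$ with $h_B(r_1)=h_B(r_2)$ yield $O(B,r_1)\cap A \subseteq O(B,r_2)\cap A$ with a null set as difference, hence identical~$\operatorname{ci}$.

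With well-definedness in hand, Properties~\ref{PropClousure}--\ref{PropIncusion} follow from the construction and from the monotone nesting $r_1\leq r_2 \Rightarrow O(B,r_1)\cap A\subseteq O(B,r_2)\cap A$. For Property~\ref{PropClousure}: the $\operatorname{ci}$ in the definition gives regularity, and Jordan measurability follows from $\partial(\operatorname{ci}(O(B,r)\cap A))\subseteq\partial O(B,r)\cup\partial A$ (both null, invoking the selection freedom above to pick an $r$ for which $\partial O(B,r)$ is null). Property~\ref{PropThroughTheEnds} follows from $r_{A,B}(0)=0$, $\operatorname{ci}(B)=B$ and $r_{A,B}(1)\geq R$, $\operatorname{ci}(A)=A$. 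The measure property~\ref{PropMeasure} is built into the construction via $\mu(\operatorname{ci}(O(B,r_{A,B}(t))\cap A))=h_B(r_{A,B}(t))=t\mu(A)+(1-t)\mu(B)$, using (\ref{jordanCIA}). For Properties~\ref{PropSubMetric} and~\ref{PropMetric}, choosing $r_{A,B}(\cdot)$ consistently so that $s\leq t$ implies $r_{A,B}(s)\leq r_{A,B}(t)$ gives the nested inclusion $sA\bigoplus(1-s)B\subseteq tA\bigoplus(1-t)B$, so by (\ref{IncludedDistance}) the symmetric-difference distance equals the difference of measures, which by Property~\ref{PropMeasure} equals $|t-s|(\mu(A)-\mu(B))=|t-s|d_\mu(A,B)$. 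The inclusion Property~\ref{PropIncusion} is direct, since $B=A\cap B\subseteq O(B,r)\cap A\subseteq A\subseteq A\cup B$. For $t\notin[0,1]$ the definition reverts to Definition~\ref{SimpleMeasureAverageSat}, and the arguments of Lemma~\ref{SimpDiffMeasureAverageProperties} still go through because $h(x)$ is continuous at every $x\neq\tfrac12$ by Corollary~\ref{FunctionContinuity}.

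The hard part is justifying that the range of $h_B$ truly covers all of $[\mu(B),\mu(A)]$ and that Jordan measurability survives at exceptional $r$. Both are handled by the same observation: since any two $r$'s with the same $h_B$ value produce the same $\operatorname{ci}$, the definition lets us select $r$ from a dense set of "good" values where $\mu(\{d(\cdot,B)=r\})=0$, bypassing the countable set of pathological $r^*$.
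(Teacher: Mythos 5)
Your overall strategy mirrors the paper's: reduce everything to the scalar function $\psi(r)=\mu\left(O(B,r)\cap A\right)$, use its monotonicity and continuity to get existence of $r_{A,B}(t)$ and the measure property, derive the metric property from the nesting in $r$ together with (\ref{IncludedDistance}), and handle $t\notin[0,1]$ by falling back on the behaviour of $h$ away from $x=\tfrac12$. However, there is a genuine gap at the very step you yourself flag as "the hard part". Your continuity argument for $\psi$ only establishes that left-continuity can fail at no more than countably many radii (via the Fubini-type argument of Remark \ref{Nongenerity}). For a monotone non-decreasing function, even a single jump discontinuity creates a gap in the range: if $\mu\left(\{p\in A: d(p,B)=r^*\}\right)>0$ for some $r^*$, then $\psi$ is right-continuous but jumps upward at $r^*$, and no value in $\left[\psi(r^{*-}),\psi(r^*)\right)$ is ever attained. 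So "monotone and continuous off a countable set" does not imply that $\psi$ maps onto $[\mu(B),\mu(A)]$, and the collection (\ref{BallMeasure}) could then be empty for some $t\in[0,1]$. Your proposed repair --- selecting $r$ from a dense set of good values --- resolves only the independence-of-choice and Jordan-measurability questions; it cannot produce an $r$ achieving a target value that lies in a gap of the range of $\psi$.

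The paper closes exactly this hole by invoking a theorem of Erd\H{o}s (\cite{erdos1945some}): for \emph{every} $\lambda>0$ and every set $B$, $\mu\left(\{p: d(p,B)=\lambda\}\right)=0$. This is a pointwise statement, not an almost-everywhere one, proved by a density-cone argument of the same flavour as Lemma \ref{EuclideanDistanceMeasure}. It yields continuity of $\psi$ at every $r>0$ (the case $r=0$ being handled by $\partial B$ having measure zero since $B\in\mathfrak{J}_n$), and then the intermediate value theorem gives surjectivity onto $[\mu(B),\mu(A)]$ and, as a by-product, Jordan measurability of $O(B,r)\cap A$ for every $r$ without any selection. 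You need this (or an equivalent cone/Lebesgue-density argument) to make the well-definedness claim go through; the rest of your proof, including the treatment of the submetric property for $t\notin[0,1]$ via the confinement of the possible jump of $h$ at $x=\tfrac12$ to the interval $[\mu(B),\mu(A)]$, is essentially the paper's argument.
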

\begin{proof}
First we consider the function $\psi\left( r \right) = \mu \left(
{O\left( {B,r} \right) \bigcap A} \right)$. It is easy to observe
that, $\psi\left(0\right)$ = $\mu(B)$ and that for $r$ large enough
$\psi\left(r\right)$ = $\mu(A)$. To show that $\psi$ is continuous,
we use a result from \cite{erdos1945some}, guaranteeing that for any
$B \subset \mathbb{R}^n$ and $\lambda > 0$, $\mu \left( {p:d\left(
{p,B} \right) = \lambda } \right) = 0$, followed by arguments as in
the proof of Lemma \ref{FunctionContinuityLemma}. By the continuity
of $\psi$, for any $t \in [0,1]$ the collection (\ref{BallMeasure})
is non-empty. Moreover the set $O\left( {B,r_{A,B} (t)}
\right)\bigcap A$ is Jordan measurable, consequently in view of
(\ref{jordanCIA}),
\begin{equation}\label{SimpDiffSpecialMeasureEq}
\mu \left( {tA \bigoplus \left( {1 - t} \right)B} \right) = t\mu
\left( A \right) + \left( {1 - t} \right)\mu \left( B \right) \ ,
\end{equation}
implying the measure property.

Next we observe that the closure, the interpolation and the
inclusion properties follow directly from Definition
\ref{SimpleMeasureAverageNotSat}.

Furthermore the construction in Definition
\ref{SimpleMeasureAverageNotSat} yields,
\begin{equation}\label{SimpDiffInclusionEq1}
sA \bigoplus \left( {1 - s} \right)B \subseteq tA \bigoplus \left(
{1 - t} \right)B \ , \ \ 0 \leq s \leq t \leq 1 \ .
\end{equation}
In all other cases of  $s \leq t$, $s,t \in \mathbb{R}$,
(\ref{SimpDiffInclusionEq1}) follows from
(\ref{SimpDiffInclusionEq}) and the inclusion property of the
measure average. In view of relations (\ref{IncludedDistance}) and
(\ref{SimpDiffInclusionEq1}), the metric property follows from the
measure property.

It remains to prove the submetric property. Let $h(x)$, $m_{A,B}$
and $M_{A,B}$ be defined as in the proof of Lemma
\ref{SimpDiffMeasureAverageProperties}.  From Corollary
\ref{FunctionContinuity}, we know that $h$ is continuous anywhere
except at $x = \frac{1}{2}$. From Lemma
\ref{AuxiliaryAverageProperties}, it follows that $h(0) = \mu(B)$,
$h(1) = \mu(A)$, so since $h$ is monotone non-decreasing, $h\left(
\frac{1}{2} \right) \in \left[\mu(B), \mu(A) \right]$. We conclude
that $z$ satisfying,
\begin{equation}\label{zInRange}
m_{A,B}< z <\mu(B) \text{ or } \mu(A) < z < M_{A,B} \ ,
\end{equation}
is in the range of $h(x)$.  This combined with
(\ref{SimpDiffSpecialMeasureEq}) gives a set of relations similar to
(\ref{SimpleMeasureTogether}), which in view of
(\ref{SimpDiffInclusionEq1}) and (\ref{IncludedDistance}) leads to
the submetric property. \qed
\end{proof}

Although the List of Properties \ref{PropertiesList} is satisfied by
the measure average of two simply different sets satisfying or not
satisfying the zero-measure condition, the distinction between the
two cases is important from the geometric point of view. This is so,
since the average defined in Definition
\ref{SimpleMeasureAverageSat} takes into account the geometric
structure of both sets, while the average in Definition
\ref{SimpleMeasureAverageNotSat} is biased towards the smaller set.

It follows from relations (\ref{SimpDiffInclusionEq}),
(\ref{SimpDiffInclusionEq1}), the interpolation property and the
inclusion property, that
\begin{equation}\label{NegativeT}
 tA \bigoplus (1-t)B \mathop  \subseteq  B, \ t < 0 \ ,
\end{equation}
\begin{equation} \label{IntervalT}
B \subseteq tA \bigoplus (1-t)B  \subseteq A, \ t \in [0,1] \ ,
\end{equation}
and
\begin{equation}\label{BiggerT}
B \subseteq A \subseteq tA \bigoplus (1-t)B, \ t > 1 \ .
\end{equation}

Relations (\ref{NegativeT})-(\ref{BiggerT}), imply that the average
$tA \bigoplus (1-t)B$ "cuts" into the set $B$ for $t < 0$,
interpolates between the two sets for $t \in [0,1]$ and expands
beyond the set $A$ for $t > 1$. This geometric behavior brings the
ideas of interpolation and extrapolation into the context of sets.
An example of the measure average of two simply different sets in
$\mathfrak{J}_2$ for varying values of the averaging parameter $t$
is given in Figure \ref{fig:DifferentValues}.

\begin{figure}[t!!!]
\begin{center}
\begin{tabular} {c c c c c c c}
\footnotesize $t = 1+\frac{1}{8}$ & \footnotesize  $t = 1$ (set $A$)
& \footnotesize $t = \frac{3}{4} $ &  \footnotesize $t = \frac{1}{2}
$ & \footnotesize $t = \frac{1}{4} $ & \footnotesize $t = 0$ (set
$B$) & \footnotesize $t = -\frac{1}{8}$
 \tabularnewline
\includegraphics[scale=0.8]{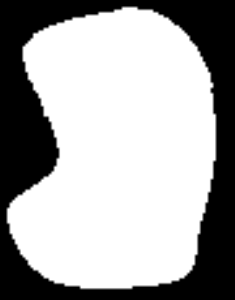} &
\includegraphics[scale=0.8]{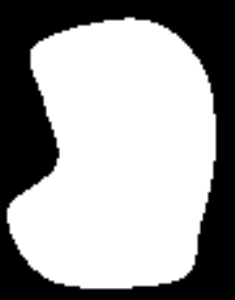} &
\includegraphics[scale=0.8]{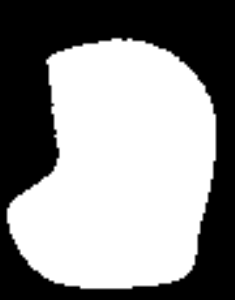} &
\includegraphics[scale=0.8]{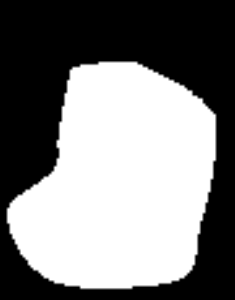} &
\includegraphics[scale=0.8]{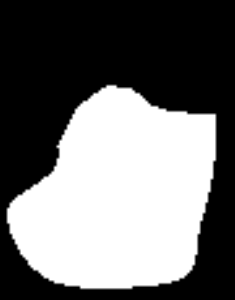}&
\includegraphics[scale=0.8]{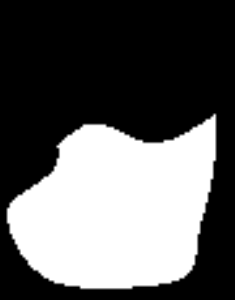} &
\includegraphics[scale=0.8]{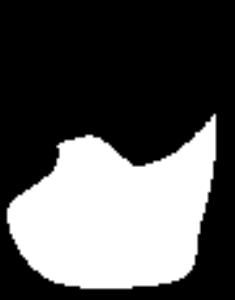}
\end{tabular}
\end{center}
\caption{The average $tA \bigoplus (1-t)B$ of simply different sets
for various values of the averaging parameter $t$.}
\label{fig:DifferentValues}
\end{figure}

\subsection{The measure average of "nested sets"}
We continue the construction with the case of $A,B \in
\mathfrak{J}_n$ such that $B \subset A$. We term two such sets
\emph{nested}. Formally, the measure average ${\bigoplus}$ for
simply different sets can be straightforwardly applied to any two
nested sets, preserving all the properties in List of Properties
\ref{PropertiesList}. However in such a construction, the function
$g(t)$ in (\ref{MeasureReparam}) dictating the change of the
averaging parameter in (\ref{SimpDiffMeasureAverage}) is global.
Consequently, when used for two nested sets $A,B$ that are not
simply different, the reparametrization creates interdependence
between different connected components of $A \setminus B$, and so
may lead to unsatisfactory results from the geometric point of view.
To reflect the local changes of the geometry of the two sets, we
decompose the average of $A,B$ into several averages of simply
different sets, using $B$ and each of the connected components of $A
\setminus B$. We denote the collection of all connected components
of a set $D$ by $\mathfrak{C}\left(D\right)$.

Let $A,B $ be nested sets, and assume at first that the number of
elements in $\mathfrak{C}\left(A \setminus B\right)$ is finite. For
any $C \in \mathfrak{C}\left(A \setminus B\right)$, we define using
the measure average $\bigoplus$ for simply different sets the set,
\begin{equation}\label{ComponentAverage}
R_{C,t} = t\left(B \bigcup C\right) \bigoplus \left( {1 - t}
\right)B \ .
\end{equation}
Note that $B \bigcup C$ and $ B$ are simply different sets.

The results of the averages of simply different sets obtained in
(\ref{ComponentAverage}) for all $C \in \mathfrak{C}\left(A
\setminus B\right)$ are merged into the average of $A$ and $B$,
taking into account the interpolation and the extrapolation induced
by relations (\ref{NegativeT})-(\ref{BiggerT}). For $t > 0$, it is
logical to take the union of the averages in
(\ref{ComponentAverage}), while for $t < 0$, it is logical to remove
from  $B$ the  union of the parts that are "cut" from $B$ by each
connected component (see (\ref{NegativeT})), which is equivalent to
the intersection of the averages in (\ref{ComponentAverage}). We
formalize the above procedure in the next definition,
\begin{definition}\label{ContainedMeasureAverage}
Let $A,B$ be nested sets such that the number of  elements in
$\mathfrak{C}\left(A \setminus B\right)$ is finite. The measure
average of $A,B$ is,
\begin{equation}\label{ContainedAverageEqu}
tA \bigoplus \left( {1 - t} \right)B = \left\{
{\begin{array}{*{20}c}
   {\begin{array}{*{20}c}
   {\bigcup\limits_{C \in \mathfrak{C}\left( {A \setminus B} \right)} {R_{C,t} } } & \qquad \ \ \ \ \ \ {t \geqslant 0}  \\

 \end{array} }  \\
   {\begin{array}{*{20}c}
   \operatorname{ci}\left( {\bigcap\limits_{C \in \mathfrak{C}\left( {A \setminus B} \right)} {R_{C,t} } }\right) & \qquad {t < 0} \\

 \end{array} }  \\

 \end{array} } \right.
\end{equation}
\end{definition}
The operator $\operatorname{ci}$ is applied in case $t < 0$, because
regular compact sets are not closed under finite intersections.
However since ${\bigcap\limits_{C \in \mathfrak{C}\left( {A
\setminus B} \right)} {R_{C,t} } }$ is Jordan measurable,
from(\ref{jordanCIA})
\begin{equation*}
\mu \left( {\bigcap\limits_{C \in \mathfrak{C}\left( {A \setminus B}
\right)} {R_{C,t} } } \right) = \mu \left( {\operatorname{ci} \left(
{\bigcap\limits_{C \in \mathfrak{C}\left( {A \setminus B} \right)}
{R_{C,t} } } \right)} \right)  \ .
\end{equation*}

In the study of the properties of the measure average in Definition
\ref{ContainedMeasureAverage}, we need the simple observation below,
\begin{lemma}\label{SetsIncusion}Let $F_1,...,F_n$, $E_1,...,E_n$ be sets such that $F_i \subseteq E_i, i = 1,...,n$, then
\begin{enumerate}
\item \label{Inclusion1}$\left( {\bigcup\limits_{i = 1}^n {E_i } } \right)\setminus \left( {\bigcup\limits_{i = 1}^n {F_i } } \right) \subseteq \bigcup\limits_{i = 1}^n {\left( {E_i \setminus F_i } \right)}$

\item \label{Inclusion2} $\left( {\bigcap\limits_{i = 1}^n {E_i } } \right)\setminus \left( {\bigcap\limits_{i = 1}^n {F_i } } \right) \subseteq \bigcup\limits_{i = 1}^n {\left( {E_i \setminus F_i } \right)} $\\\\

Under the stronger assumption that for all $i,j \in \left\{
{1,...,n} \right\},F_j  \subseteq E_i$,
\item \label{Inclusion3} $\left( {\bigcup\limits_{i = 1}^n {E_i } } \right)\setminus \left( {\bigcap\limits_{i = 1}^n {F_i } } \right) \subseteq \bigcup\limits_{i = 1}^n {\left( {E_i \setminus F_i } \right)} $

\end{enumerate}
\end{lemma}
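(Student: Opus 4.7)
The three inclusions are elementary set-theoretic facts, and I would prove each by a direct element-chase, producing, for every $p$ in the left-hand side, an index $i$ such that $p \in E_i \setminus F_i$.

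For Property \ref{Inclusion1}, fix $p \in \bigcup_i E_i \setminus \bigcup_i F_i$. Then $p \in E_k$ for some $k$, and $p \notin F_i$ for every $i$; in particular $p \notin F_k$, so $p \in E_k \setminus F_k$, which lies in the right-hand side. For Property \ref{Inclusion2}, if $p \in \bigcap_i E_i \setminus \bigcap_i F_i$, then $p \in E_i$ for all $i$, and there exists $j$ with $p \notin F_j$; hence $p \in E_j \setminus F_j$.

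For Property \ref{Inclusion3} I would use the stronger hypothesis in a two-case argument, and this is the only step with any real content. Let $p \in \bigcup_i E_i \setminus \bigcap_i F_i$; pick $k$ with $p \in E_k$ and $j$ with $p \notin F_j$. If $p \notin F_k$, then $p \in E_k \setminus F_k$ and we are done. Otherwise $p \in F_k$, and the hypothesis $F_k \subseteq E_i$ for every $i$ gives in particular $p \in E_j$; combined with $p \notin F_j$ this yields $p \in E_j \setminus F_j$. In either case $p$ lies in $\bigcup_i (E_i \setminus F_i)$.

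The only place where the proof is not immediate is Property \ref{Inclusion3}, where the index $k$ witnessing membership in $\bigcup_i E_i$ need not coincide with the index $j$ witnessing $p \notin \bigcap_i F_i$; the hypothesis $F_j \subseteq E_i$ for all $i,j$ is precisely what allows one to bridge between the two indices. No other technical machinery is required.
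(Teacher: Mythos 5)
Your proof is correct and follows essentially the same route as the paper: the first two inclusions by a direct element chase, and the third by the same two-case argument (whether or not $p$ lies in $F_k$), using the stronger hypothesis to transfer membership from the index witnessing $p \in \bigcup_i E_i$ to the index witnessing $p \notin \bigcap_i F_i$. Nothing is missing.
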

\begin{proof} Properties \ref{Inclusion1}-\ref{Inclusion2} are immediate to verify. To observe  Property \ref{Inclusion3},
let $p \in \left( {\bigcup\limits_{i = 1}^n {E_i } }
\right)\setminus \left( {\bigcap\limits_{i = 1}^n {F_i } } \right)$,
then $p \in E_l$ for some $l$. Now if $p \notin F_l$, then $p \in
E_l \setminus F_l$. If $p \in F_l$, then due to the stronger
assumption, $p \in E_i$ for all $i$. Since there is $j$ such that $p
\notin F_j$, $p \in E_j \setminus F_j$. \qed We can now show that,
\begin{lemma}\label{ContainedMeasureAverageProps}
The measure average of nested sets satisfies Properties
\ref{PropClousure}-\ref{PropIncusion} in List of Properties
\ref{PropertiesList}.
\end{lemma}
\proof The closure property, the interpolation property and the
inclusion property follow immediately from Definition
\ref{ContainedMeasureAverage} and the corresponding properties of
the measure average of simply different sets. Assume without loss of
generality that $s \leq t$, then by relations
(\ref{SimpDiffInclusionEq}) and (\ref{SimpDiffInclusionEq1}),
$R_{C,s}  \subseteq R_{C,t}$, leading to
\begin{equation}\label{NestedInclusionEq}
sA \bigoplus \left( {1 - s} \right)B \subseteq tA \bigoplus \left(
{1 - t} \right)B \ .
\end{equation}
To prove the submetric property, we have to consider three cases:
(i). $0 \leq s \leq t$ (ii).$s \leq t \leq 0$  and (iii).$s < 0 \leq
t$. Observe that the averages $sA \bigoplus \left( {1 - s}
\right)B$, $tA \bigoplus \left( {1 - t} \right)B$ in cases
(i)-(iii), when written in terms of $\left\{ {R_{C,s} :C \in
\mathfrak{C}\left( {A\backslash B} \right)} \right\}$ and $\left\{
{R_{C,t} : C\in \mathfrak{C}\left( {A\backslash B} \right)}
\right\}$, correspond to cases 1 - 3 in Lemma \ref{SetsIncusion}. In
particular, the assumptions required for case \ref{Inclusion3} in
Lemma \ref{SetsIncusion} are satisfied due to relations
(\ref{NegativeT})-(\ref{BiggerT}). We conclude that in view of
(\ref{NestedInclusionEq}) and Lemma \ref{SetsIncusion},
\begin{equation*}
d_\mu  \left( {tA \bigoplus \left( {1 - t} \right)B, sA \bigoplus
\left( {1 - s} \right)B} \right)= \mu \left( {tA \bigoplus \left( {1
- t} \right)B\setminus s A \bigoplus \left( {1 - s} \right)B}
\right) \leqslant \mu \left( {\bigcup\limits_{C \in
\mathfrak{C}\left( {A \setminus B} \right)} {R_{C,t} }  \setminus
R_{C,s} } \right) \ .
\end{equation*}
Now by the submetric property of the measure average of simply
different sets,
\begin{equation*}
\sum\limits_{C \in \mathfrak{C}\left( {A\setminus B} \right)} {\mu
\left( {R_{C,t} \setminus R_{C,s} } \right)}  \leq \sum\limits_{C
\in \mathfrak{C}\left( {A\setminus B} \right)} {\left| {t - s}
\right|} \mu \left( C \right) = \left| {t - s} \right|\left( {\mu
\left( A \right) - \mu \left( B \right)} \right)\ ,
\end{equation*}
yielding the submetric property.

To prove the measure property we take $t \in [0,1]$, and observe
that for $C_1,C_2 \in \mathfrak{C}\left(A\setminus B\right)$, $C_1
\neq C_2$ we have by (\ref{IntervalT}) that $R_{C_1 ,t}  \bigcap
R_{C_2 ,t}  = B$. Therefore,
\begin{equation*}
\mu \left( {tA \bigoplus \left( {1 - t} \right)B} \right) = \mu
\left( {\bigcup\limits_{C \in \mathfrak{C}\left( {A \setminus B}
\right)} {R_{C,t} } } \right) = \mu \left( B \right) +
\sum\limits_{C \in \mathfrak{C}\left( {A \setminus B} \right)} {\mu
\left( {R_{C,t}  \setminus B} \right)} \ ,
\end{equation*}
and by the measure property of the measure average of simply
different sets,
\begin{equation*}
\mu \left( {tA \bigoplus \left( {1 - t} \right)B} \right) = \mu
\left( B \right) + \sum\limits_{C \in \mathfrak{C}\left( {A
\setminus B} \right)} {t\mu \left( C \right)}  = \mu \left( B
\right) + t\left( {\mu \left( A \right) - \mu \left( B \right)}
\right) \ .
\end{equation*}
Finally the metric property follows from the measure property, by
(\ref{IncludedDistance}) and (\ref{NestedInclusionEq}). \qed
\end{proof}
The measure average of nested sets satisfies  relations
(\ref{NegativeT})-(\ref{BiggerT}) as well.

Although the measure average of $A$ with itself is not defined by
Definition \ref{ContainedMeasureAverage}, it follows from continuity
arguments.
\begin{remark}
If follows from the interpolation property and the metric property,
that the sequence of measure averages $tA_i \bigoplus \left( {1 - t}
\right)B$, with $A_i = B \bigcup D_i$ and $\mathop {\lim }\limits_{i
\to \infty } \mu \left( {D_i } \right) = 0$, satisfies $\mathop
{\lim }\limits_{i \to \infty } tA_i \bigoplus \left( {1 - t}
\right)B = B$. Therefore by continuity, we define for any $t \in
\mathbb{R}$, $A \in \mathfrak{J}_n$, $tA \bigoplus \left( {1 - t}
\right)A = A$.
\end{remark}

For the sake of completeness, we consider the case of nested $A,B$,
when $A \setminus B$ has an infinite number of connected components.
In this case, since the measure of $A$ is bounded, there is only a
finite number of connected components of $A$ with measure greater
that a preassigned $\varepsilon > 0$, and all connected components
of $A \setminus B$ with measure smaller than $\varepsilon$ are
joined into one set,
\begin{equation*}
U_{A,B,\varepsilon }  = \bigcup\limits_{C \in \mathfrak{C}\left(
{A,B} \right),\mu \left( C \right) < \varepsilon } C \ .
\end{equation*}
The set $U_{A,B,\varepsilon }$ is treated as a "single component" in
Definition \ref{ContainedMeasureAverage}. It is not difficult to
show that $B \bigcup U_{A,B,\varepsilon }$ is in $\mathfrak{J}_n$.
One can verify that all properties in List of Properties
\ref{PropertiesList} are preserved in this case.

\subsection{The measure average of general sets}
We are now in position to define the measure average $tA \bigoplus
\left( {1 - t} \right)B$ of two general sets $A,B \in
\mathfrak{J}_n$. The average is decomposed into two averages of
nested sets,
\begin{equation}\label{R1}
R_{1,t}  = tA \bigoplus (1 - t)\operatorname{ci}\left( {A\bigcap B }
\right) \ ,
\end{equation}
and
\begin{equation}\label{R2}
R_{2,t}  = (1 - t)B \bigoplus t\operatorname{ci}\left( {A\bigcap B }
\right) \ .
\end{equation}
The two averages are merged preserving the geometry of the
interpolation and the extrapolation of sets.
\begin{definition}\label{GeneralAverage}
Let $A,B \in \mathfrak{J}_n$, the measure average of $A,B$ with $t
\in \mathbb{R}$ is,
\begin{equation}
tA \bigoplus (1 - t)B = \left\{ {\begin{array}{l}
   {R_{1,t} \bigcup {R_{2,t} } }  \medskip\\
   {{\operatorname{ci}\left( {R_{1,t} \backslash A \bigcap B} \right) \bigcup R_{2,t}} }  \medskip\\
   {(1 - t)A \bigoplus tB}  \\
\end{array}
\qquad \qquad
\begin{array}{l}
   {t \in \left[ {0,1} \right]}  \medskip\\
   {t > 1}  \medskip\\
   {t < 0}  \\
\end{array}} \right.
\end{equation}
with the sets $R_{1,t}$,$R_{2,t}$ defined in (\ref{R1})-(\ref{R2}).
\end{definition}

Notice that for $t > 1$, $A \bigcap B$ is removed from $R_{1,t}$, so
that the "cutting" from $A \bigcap B$ by the extrapolation in
$R_{2,t}$ will affect the resulting average.

\begin{remark}\label{Symmetry}
It follows from Definition \ref{GeneralAverage} that for any $t \in
\mathbb{R}$, $A,B \in \mathfrak{J}_n$, $tA \bigoplus (1 - t)B =
\left( {1 - t} \right)B \bigoplus tA$.
\end{remark}

\begin{theorem}\label{GeneralMeasureAverageProperties}
The measure average of any two sets $A,B \in \mathfrak{J}_n$
satisfies Properties \ref{PropClousure}-\ref{PropIncusion} in List
of Properties \ref{PropertiesList}.
\end{theorem}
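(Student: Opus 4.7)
The plan is to verify the six properties in turn, using Lemma \ref{ContainedMeasureAverageProps} (the nested case) as the workhorse, since $\operatorname{ci}(A \cap B) \subseteq A$ and $\operatorname{ci}(A \cap B) \subseteq B$ make both $R_{1,t}$ and $R_{2,t}$ averages of nested sets. By Remark \ref{Symmetry}, the case $t < 0$ in Definition \ref{GeneralAverage} reduces to the case $1-t > 1$ with $A$ and $B$ interchanged, so I may restrict attention to $t \geq 0$. Closure is immediate from the closure of $\mathfrak{J}_n$ under finite unions and under the $\operatorname{ci}$ operation on Jordan-measurable sets. Interpolation follows by substituting $t = 0$ and $t = 1$ into $R_{1,t}$ and $R_{2,t}$ and invoking the nested interpolation property, using $A \cup \operatorname{ci}(A \cap B) = A$ and similarly for $B$. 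The inclusion property is inherited directly from the nested inclusion property, since $\operatorname{ci}(A \cap B) \subseteq R_{i,t}$, $R_{1,t} \subseteq A$, and $R_{2,t} \subseteq B$.

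For the measure property on $t \in [0,1]$, the key step is to show $\mu(R_{1,t} \cap R_{2,t}) = \mu(A \cap B)$. The nested inclusion gives $\operatorname{ci}(A \cap B) \subseteq R_{1,t} \cap R_{2,t}$, while $R_{1,t} \subseteq A$ and $R_{2,t} \subseteq B$ give $R_{1,t} \cap R_{2,t} \subseteq A \cap B$. Jordan measurability of $A \cap B$ (its boundary lies in $\partial A \cup \partial B$) together with (\ref{jordanCIA}) yields $\mu(\operatorname{ci}(A \cap B)) = \mu(A \cap B)$, so the intersection has measure $\mu(A \cap B)$. Combined with the nested measure property $\mu(R_{1,t}) = t\mu(A) + (1-t)\mu(A \cap B)$ and $\mu(R_{2,t}) = (1-t)\mu(B) + t\mu(A \cap B)$, inclusion--exclusion gives $\mu(R_{1,t} \cup R_{2,t}) = t\mu(A) + (1-t)\mu(B)$.

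For the metric property with $0 \leq s \leq t \leq 1$, monotonicity of the nested average in its parameter gives $R_{1,s} \subseteq R_{1,t}$, while the coefficient of $B$ in $R_{2,t}$ is $1 - t$ which decreases in $t$, so $R_{2,t} \subseteq R_{2,s}$. The ``new'' part $R_{1,t} \setminus R_{1,s}$ is contained in $A \setminus \operatorname{ci}(A \cap B)$, which equals $A \setminus B$ modulo measure zero and is hence disjoint mod measure zero from $R_{2,s} \subseteq B$; symmetrically $R_{2,s} \setminus R_{2,t} \subseteq B \setminus A$ is disjoint mod measure zero from $R_{1,t} \subseteq A$. Summing these two disjoint contributions via the nested metric property gives $\mu((R_{1,t} \cup R_{2,t}) \Delta (R_{1,s} \cup R_{2,s})) = |t-s|(\mu(A \setminus B) + \mu(B \setminus A)) = |t-s|\, d_\mu(A,B)$.

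The submetric property for general $s,t \in \mathbb{R}$ I would handle by case analysis. The case $s,t \in [0,1]$ follows from the metric property just established. The case $s,t > 1$ is treated analogously, noting that for $t > 1$ relation (\ref{NegativeT}) applied to $R_{2,t}$ gives $R_{2,t} \subseteq \operatorname{ci}(A \cap B)$, so $R_{2,t}$ and $\operatorname{ci}(R_{1,t} \setminus (A \cap B))$ are disjoint modulo measure zero, and neither $\operatorname{ci}$ nor the set-difference with $A \cap B$ increases the symmetric difference. Mixed cases (e.g.\ $s \in [0,1] < t$) are reduced by the triangle inequality across the boundary $t = 1$, after checking that the two piecewise formulas agree at $t=1$ modulo measure zero (namely $\operatorname{ci}(A \setminus B) \cup \operatorname{ci}(A \cap B) = A$ mod measure zero for $A, B \in \mathfrak{J}_n$). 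The main obstacle I anticipate is the careful bookkeeping of these ``mod measure zero'' errors across the piecewise boundaries $t = 0, 1$, and verifying in each case that the decomposition into $R_{1,\cdot}$ and $R_{2,\cdot}$ captures the entire symmetric difference without leakage through the common intersection $\operatorname{ci}(A \cap B)$.
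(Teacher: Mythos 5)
Your proposal is correct and follows essentially the same route as the paper: both rest on the decomposition into the two nested averages $R_{1,t}$ and $R_{2,t}$, the identification of their common part with $\operatorname{ci}(A\cap B)$ (the paper asserts the set equality $R_{1,t}\cap R_{2,t}=\operatorname{ci}(A\cap B)$ outright, while you more cautiously derive only the measure equality, which is all that is needed), inclusion--exclusion for the measure property, and the monotonicity $R_{1,s}\subseteq R_{1,t}$, $R_{2,t}\subseteq R_{2,s}$ for the metric and submetric properties. The only difference is cosmetic: for the submetric property across the piecewise cases the paper states a single inequality $d_\mu\bigl(sA\bigoplus(1-s)B,\,tA\bigoplus(1-t)B\bigr)\leq d_\mu\left(R_{1,s},R_{1,t}\right)+d_\mu\left(R_{2,s},R_{2,t}\right)$, whereas you split by cases and chain through the breakpoints $t=0,1$ with the triangle inequality, which supplies bookkeeping the paper leaves implicit.
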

\begin{proof} The closure, the interpolation and the inclusion properties follow from the similar properties
of the measure average of nested sets. Next, we observe that from
the inclusion property in the nested case, we have for $t \in
[0,1]$,
\begin{equation}\label{OnlyIntersection}
R_{1,t} \bigcap R_{2,t}  = \operatorname{ci}\left(A \bigcap B\right)
\ ,
\end{equation}
so,
\begin{equation}\label{MeasureMath}
\mu \left( {tA \bigoplus \left( {1 - t} \right)B} \right) = \mu
\left( {R_{1,t} } \right) + \mu \left( {R_{2,t} } \right) - \mu
\left( {A\bigcap B } \right) \ .
\end{equation}
Using (\ref{MeasureMath}) and the measure property in the nested
case we obtain that for $t \in [0,1]$,
\begin{equation*}
\mu \left( {tA \bigoplus \left( {1 - t} \right)B} \right) = t\mu
\left( A \right) + \left( {1 - t} \right)\mu \left( {A\bigcap B }
\right) + \left( {1 - t} \right)\mu \left( B \right) + t\mu \left(
{A\bigcap B } \right) - \mu \left( {A\bigcap B } \right)\ ,
\end{equation*}
which yields the measure property. The metric property is proved
using (\ref{OnlyIntersection}) and the metric property of the
measure average in the nested case.

To prove the submetric property, we observe that for $s \leq t$,
$R_{1,s}  \subseteq R_{1,t} $ and $R_{2,t}  \subseteq R_{2,s} $.
Consequently,
\begin{equation}
d_\mu  \left( {sA \bigoplus \left( {1 - s} \right)B,tA \bigoplus
\left( {1 - t} \right)B} \right) \leq d_\mu  \left( {R_{1,s}
,R_{1,t} } \right) + d_\mu  \left( {R_{2,s} ,R_{2,t} } \right) \ ,
\end{equation}
leading to the submetric property in view of the submetric property
of the measure average in the nested case.
\end{proof}
\qed

As an immediate consequence of the metric property of the measure
average, we obtain a result about the approximation of H{\"o}lder
continuous SVFs by the piecewise interpolant based on the measure
average. We term a SVF $F$ H{\"o}lder-$\nu$ continuous if,
\begin{equation}
d_\mu  \left( {F\left( {t_1 } \right),F\left( {t_2 } \right)}
\right) \leq C\left| {t_1  - t_2 } \right|^\nu \ ,
\end{equation}
where $C$ is a constant (termed the H{\"o}lder constant of $F$)
depending on $F$ but not on $t_1,t_2$ and $\nu \in
\left(0,1\right]$. A H{\"o}lder-$1$ continuous SVF is also termed
Lipschitz continuous.
\begin{corollary}\label{PieceApprox} Let $F:\left[ {0,1} \right] \to \mathfrak{J}_n $ be H{\"o}lder-$\nu$ continuous. We define $P_N\left( x \right):[0,1] \to \mathfrak{J}_n $ to be
the piecewise interpolant,
\begin{equation}\label{PieceApproxEq}
P_N\left( x \right) = \left( {\frac{x} {h} - i} \right)F\left( {ih}
\right) \bigoplus \left( {\left( {i + 1} \right) - \frac{x} {h}}
\right)F\left( {\left( {i + 1} \right)h} \right),\ \ x \in \left[
{ih,\left( {i + 1} \right)h} \right], \ h = 1/N, \ i = 0,...N + 1 \
.
\end{equation}
Then for any $x \in [0,1]$,
\begin{equation*}
d_\mu \left( {F\left( x \right),P_N\left( x \right)} \right) \leq
Ch^\nu  \ ,
\end{equation*}
where $C$ is the H{\"o}lder constant of $F$.
\end{corollary}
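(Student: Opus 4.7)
The plan is to estimate $d_\mu(F(x), P_N(x))$ by splitting it with the triangle inequality into a piece controlled by H\"older continuity of $F$ and a piece controlled by the metric property of the measure average (Property \ref{PropMetric} of List of Properties \ref{PropertiesList}, established in Theorem \ref{GeneralMeasureAverageProperties}).

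Fix $x \in [0,1]$ and let $i$ be the index with $x \in [ih,(i+1)h]$. From \eqref{PieceApproxEq}, $P_N(x)$ is a measure average of the two endpoint values $F(ih)$ and $F((i+1)h)$ with weights in $[0,1]$ that depend affinely on $x$. Using the interpolation property to write each of $F(ih)$ and $F((i+1)h)$ as a trivial measure average of the same pair (with weights $1,0$ and $0,1$ respectively), the metric property applied to $A = F(ih)$, $B = F((i+1)h)$ gives that the $d_\mu$-distances from $P_N(x)$ to $F(ih)$ and to $F((i+1)h)$ are $\alpha\, d_\mu(F(ih),F((i+1)h))$ and $(1-\alpha)\, d_\mu(F(ih),F((i+1)h))$ for the appropriate $\alpha \in [0,1]$ read from \eqref{PieceApproxEq}. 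In particular each is at most $d_\mu(F(ih),F((i+1)h)) \le C h^\nu$ by H\"older continuity of $F$ applied to the pair $ih,(i+1)h$.

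For the other piece, H\"older continuity of $F$ directly yields $d_\mu(F(x), F(jh)) \le C|x-jh|^\nu \le C h^\nu$ for either $j = i$ or $j = i+1$. Combining with the triangle inequality,
\[
d_\mu(F(x), P_N(x)) \le d_\mu(F(x), F(jh)) + d_\mu(F(jh), P_N(x)),
\]
and choosing $j$ to be the closer of $i,i+1$ while using that $P_N(x)$ lies on a $d_\mu$-geodesic between $F(ih)$ and $F((i+1)h)$ (precisely the content of the metric property), the two contributions balance and give a bound of the stated form $C h^\nu$.

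I do not expect any real obstacle: once Property \ref{PropMetric} is in hand, the proof is a one-line triangle-inequality estimate. The only mildly delicate point is bookkeeping the additive constant—matching the exact constant $C$ claimed in the statement requires pairing $P_N(x)$ with the nearer of the two knots rather than an arbitrary one, but it is otherwise routine.
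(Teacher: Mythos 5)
Your argument is correct and is exactly the one the paper intends: the corollary is presented there with no proof beyond the remark that it is an immediate consequence of the metric property, and your triangle inequality through a knot, combined with Property \ref{PropMetric} (plus the interpolation property) and the H\"older bound on $d_\mu\left(F(ih),F((i+1)h)\right)$, is that argument. One quantitative caveat: writing $\beta=\min\{x-ih,\,(i+1)h-x\}/h\le\tfrac12$, the route through the nearer knot gives $d_\mu\left(F(x),P_N(x)\right)\le C\left(\beta^\nu+\beta\right)h^\nu$, which is bounded by $Ch^\nu$ only when $\nu=1$; for $\nu<1$ the best this yields is $C\left(2^{-\nu}+\tfrac12\right)h^\nu$, so your claim that pairing with the nearer knot recovers the exact constant $C$ is not quite right --- the constant is off by a harmless factor at most $\tfrac32$, a slip already present in the paper's own statement rather than in your method.
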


In the following sections we study subdivision methods based on the
measure average of sets.

\section{Spline subdivision schemes adapted to sets with the measure average}\label{sectionSplineSubdivision}
In this section, we use the measure average in the adaptation to
sets of spline subdivision schemes (\emph{see.e.g}
\cite{dyn2003subdivision}, Section 3.1) . First, the refinement rule
is expressed by repeated binary averages using the Lane-Riesenfeld
algorithm \cite{lane1980theoretical}. Binary averages of numbers are
then replaced by the measure averages of sets. Our approach is
similar to \cite{dyn2002spline}, where spline subdivision schemes
are adapted to sets using the metric average.

An $m$-degree spline subdivision scheme ($ m \geq 1$) in the
real-valued setting refines the numbers,
\begin{equation*}
\left\{ {f_i^k :i \in \mathbb{Z}} \right\} \subset \mathbb{R} ,
\end{equation*}
according to the refinement rule,
\begin{equation}\label{splineRule}
f_i^{k + 1}  = \sum\limits_{j \in \mathbb{Z}} {a_{\left\lceil
{\frac{{m + 1}} {2}} \right\rceil  + i - 2j}^{\left( m \right)}
f_j^k } \ \ ,i \in \mathbb{Z}\ \ ,k = 0,1,2,...,
\end{equation}
where $\lceil \cdot \rceil$ is the \emph{ceiling function}, $a_{l}^{(m)}  = \left( {\begin{array}{*{20}c}   {m + 1}  \\   l  \\
 \end{array} } \right)/2^m$ for $l = 0,1,...,m + 1$,  and $a_{l}^{(m)}  = 0$ for $l \in \mathbb{Z}\setminus \left\{ {0,1,...,m + 1} \right\}$.

The case $m = 2$ is the Chaikin subdivision scheme with the
refinement rule,
\begin{equation*}
f_{2i}^{k + 1}  = \frac{3}{4}f_i^k  + \frac{1}{4}f_{i + 1}^k \ ,
\end{equation*}
\begin{equation*}
f_{2i + 1}^{k + 1}  = \frac{1}{4}f_i^k  + \frac{3}{4}f_{i + 1}^k \ .
\end{equation*}
Chaikin scheme is the simplest which generates $C^1$ limits, and is
widely used.

At each level $k$, the piecewise linear interpolant to the data
$\left( {2^{ - k} i,f_i^k } \right),k \in \mathbb{Z}$ is defined on
$\mathbb{R}$ by,
\begin{equation}\label{realfk}
f_k \left( x \right) = \lambda \left( x \right)f_i^k  + \left( {1 -
\lambda \left( x \right)} \right)f_{i + 1}^k \;,\qquad i2^{ - k}
\leq x \leq \left( {i + 1} \right)2^{ - k}  \ ,
\end{equation}
where $\lambda \left( x \right) = \left( {i + 1} \right) - x2^k$.
The sequence $\left\{ {f_k }\left(x\right) \right\}$ converges
uniformly to a continuous function $f^\infty\left(x\right)$, which
is the spline of degree $m$ with the control points $\left( {i,f_i^0
} \right),i \in \mathbb{Z}$.

The Lane-Riesenfeld algorithm evaluates (\ref{splineRule}), by first
doubling the values at level $k$ and then performing $m$ steps of
repeated averages. We apply the above procedure to sets, by
replacing averages of numbers with the measure average of sets. We
combine the doubling step with one averaging step. So first the
sequence of sets at level $k$, $\left\{ {F_i^k :i \in \mathbb{Z}}
\right\}$ is refined using the measure average,
\begin{equation}\label{SetSplineRule1}
F_{2i}^{k + 1,0}  = F_i^k, \ \ F_{2i + 1}^{k + 1,0}  =
\frac{1}{2}F_i^k  \bigoplus \frac{1}{2}F_{i + 1}^k,\ \ i \in
\mathbb{Z} \ .
\end{equation}
Then for $1 \leq  j \leq m - 1$, the sequence $\left\{ {F_i^{k + 1,j
- 1} :i \in \mathbb{Z}} \right\}$ is replaced by the measure
averages of pairs of consecutive sets,
\begin{equation}\label{SetSplineRule2}
F_i^{k + 1,j}  = \frac{1} {2}F_i^{k + 1,j - 1}  \bigoplus \frac{1}
{2}F_{i + 1}^{k+1,j - 1} ,\qquad i \in \mathbb{Z}\ .
\end{equation}
Finally, the refined sets at level $k+1$ are given by,
\begin{equation}\label{SetSplineRule3}
\begin{array}{*{20}c}
 F_i^{k + 1}  = F_{i - \left\lfloor {\frac{{m - 1}}
{2}} \right\rfloor }^{k + 1,m - 1}  \ , \qquad i \in \mathbb{Z} \ .
 \end{array}
\end{equation}
For the analysis of the above family of subdivision schemes we use,
similarly to the real-valued case, the piecewise interpolant $F_k
\left( x \right)$ in terms of the measure average through the sets
al the $k$-th level,
\begin{equation}\label{Fk}
F_k \left( x \right) = \lambda \left( x \right)F_i^k  \bigoplus
\left( {1 - \lambda \left( x \right)} \right)F_{i + 1}^k \ , \qquad
i2^{ - k} \leq x \leq  \left( {i + 1} \right)2^{ - k} \ ,
\end{equation}
 where
$\lambda \left( x \right) = \left( {i + 1} \right) - x2^k $.

The following results on convergence and approximation order of
spline subdivision schemes based on the measure average are
analogous to results on spline subdivision schemes bases on the
metric average in \cite{dyn2001spline}.
\begin{theorem} \label{SplineConvergence} The sequence of set-valued functions $\left\{ {F_k \left( x \right)} \right\}_{k \in \mathbb{Z} + }$ converges uniformly to a continuous set-valued function $F^\infty\left( x \right) : \mathbb{R} \to \mathfrak{L}_n$,
which is Lipschitz continuous relative to the symmetric difference
metric with the Lipschitz constant $L  = \mathop {\sup }\limits_i
d_\mu  \left( {F_i^0 ,F_{i + 1}^0 } \right)$.
\end{theorem}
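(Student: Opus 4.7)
The plan is to first establish a geometric contraction of the maximal consecutive-set distance at each refinement level, and then deduce uniform convergence and Lipschitz continuity from it. Setting $d_k := \sup_{i \in \mathbb{Z}} d_\mu(F_i^k, F_{i+1}^k)$, the key intermediate claim is that $d_{k+1} \leq \tfrac{1}{2} d_k$, so that $d_k \leq 2^{-k} L$. Granting this, the submetric property of Theorem \ref{GeneralMeasureAverageProperties} applied within each level-$k$ interval of $F_k$, together with a triangle inequality across grid points, will give the uniform Lipschitz bound $d_\mu(F_k(x), F_k(y)) \leq L |x-y|$ (the scale factor $2^k$ from the reparametrization $\lambda$ cancels the factor $2^{-k}$ in $d_k$), a bound that persists in any uniform limit.

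To prove $d_{k+1} \leq \tfrac{1}{2} d_k$ I would split the Lane--Riesenfeld algorithm into its two kinds of steps. In the combined doubling and first averaging step (\ref{SetSplineRule1}), each new consecutive pair has the form $(F_i^k, \tfrac{1}{2} F_i^k \bigoplus \tfrac{1}{2} F_{i+1}^k)$ or $(\tfrac{1}{2} F_i^k \bigoplus \tfrac{1}{2} F_{i+1}^k, F_{i+1}^k)$; rewriting the lone operand $F_i^k$ as the degenerate average $1 \cdot F_i^k \bigoplus 0 \cdot F_{i+1}^k$ via the interpolation property and applying the submetric property bounds each such distance by $\tfrac{1}{2} d_k$. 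For every subsequent averaging step in (\ref{SetSplineRule2}), the triangle inequality through $F_{i+1}^{k+1,j-1}$, combined twice with the submetric property, shows that the supremum of consecutive distances does not increase. Composing these two kinds of steps gives the claimed contraction.

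For uniform convergence I would bound $\sup_x d_\mu(F_k(x), F_{k+1}(x))$ by $C_m 2^{-k} L$ and sum a geometric series; completeness of $(\mathfrak{L}_n, d_\mu)$ then yields the uniform limit $F^\infty$. The pointwise bound is obtained by tracing through the Lane--Riesenfeld construction: each refined set $F_j^{k+1}$ is a nested weight-$1/2$ average of at most $m+1$ consecutive level-$k$ sets, and repeated use of the submetric property, always via the device of rewriting an operand as a degenerate average, delivers $d_\mu(F_\ell^k, F_j^{k+1}) \leq C_m d_k$ for an appropriate nearby index $\ell$. Extending the estimate from grid points to arbitrary $x$ inside a level-$(k+1)$ interval is another application of the submetric property, and passing to the limit in the Lipschitz bound for $F_k$ then yields $d_\mu(F^\infty(x), F^\infty(y)) \leq L |x-y|$.

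The principal obstacle is the nonlinearity of the measure average: the submetric property controls only changes in the averaging parameter with the operand sets held fixed, whereas the Lane--Riesenfeld binary averages mix operands drawn from several previous sub-levels. Every distance estimate therefore has to be chained through the triangle inequality via an intermediate operand set, typically expressed as a degenerate average using the interpolation property, so that the submetric property is actually applicable at each link. Once this chaining template is fixed, the remainder is a routine propagation of the geometric bound $d_k \leq 2^{-k} L$ and standard Cauchy-sequence reasoning in the complete metric space $(\mathfrak{L}_n, d_\mu)$, paralleling the analysis carried out for the metric average in \cite{dyn2001spline}.
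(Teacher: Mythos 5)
Your proposal is correct and follows essentially the same route as the paper, which omits the proof by deferring to the metric-average argument of Dyn and Farkhi \cite{dyn2001spline}: the geometric contraction $d_{k+1}\leq \tfrac{1}{2}d_k$ obtained by combining the interpolation, metric and submetric properties with the triangle inequality through shared operands, the resulting uniform Cauchy estimate for the piecewise interpolants $F_k$, and the persistence of the level-wise Lipschitz bound $L\,|x-y|$ in the uniform limit. Your explicit attention to the chaining device (rewriting an operand as a degenerate average so the metric/submetric properties become applicable) is exactly the point that makes the real-valued argument transfer to the set-valued setting.
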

\begin{theorem}\label{SplineKApproximation} Let $G : \mathbb{R} \to \mathfrak{L}_n$ be Lipschitz continuous,
and let the initial sets be given by $F_i^0  = G\left( {\delta  +
ih} \right) \in \mathfrak{J}_n,$  $i \in \mathbb{Z}$ with $\delta
\in [0,h)$ and $h > 0$. Then,
\begin{equation}
d_\mu \left( {G \left( x \right),F_k \left( x \right)} \right) \leq
Ch \ ,
\end{equation}
where $F_k\left(x\right)$ is given by (\ref{Fk}) and $C$ is a
constant depending on the degree of the scheme.
\end{theorem}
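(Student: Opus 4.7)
The strategy is to introduce the level-$0$ piecewise measure-average interpolant $F_0(x)$ through the initial samples and split via the triangle inequality:
\begin{equation*}
d_\mu(G(x), F_k(x)) \leq d_\mu(G(x), F_0(x)) + \sum_{j=0}^{k-1} d_\mu(F_j(x), F_{j+1}(x)).
\end{equation*}
The first term is bounded by $Lh$ via a direct translation of Corollary \ref{PieceApprox} with $\nu = 1$ (absorbing the shift $\delta$ into the indexing), where $L$ is the Lipschitz constant of $G$. So the task reduces to bounding the telescoping sum by $O(h)$ with a constant depending only on the degree $m$.

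The crucial step is to prove by induction on $k$ that the level-$k$ local variation
\begin{equation*}
\Delta_k := \sup_i d_\mu(F_i^k, F_{i+1}^k)
\end{equation*}
satisfies $\Delta_k \leq L h / 2^k$. The base case follows from the Lipschitz continuity of $G$. For the inductive step, the combined doubling-and-averaging stage (\ref{SetSplineRule1}) halves the supremum: since $F_i^k = 1 \cdot F_i^k \bigoplus 0 \cdot F_{i+1}^k$ by the interpolation property, the metric property (Property \ref{PropMetric}) with parameters in $[0,1]$ yields $d_\mu(F_{2i}^{k+1,0}, F_{2i+1}^{k+1,0}) = \tfrac{1}{2} d_\mu(F_i^k, F_{i+1}^k)$, and symmetrically for the odd-indexed consecutive pairs. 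Each subsequent averaging stage (\ref{SetSplineRule2}) preserves this bound: inserting $F_{i+1}^{k+1,j-1}$ as an intermediate term in $d_\mu(F_i^{k+1,j}, F_{i+1}^{k+1,j})$ and applying the metric property twice bounds the result by the supremum at stage $j-1$. Then, using the definition (\ref{Fk}) and the submetric property (Property \ref{PropSubMetric}), one obtains $d_\mu(F_j(x), F_{j+1}(x)) \leq C_m \Delta_j \leq C_m L h / 2^j$ for each $x$ and $j$, where $C_m$ depends on $m$. Summing the geometric series gives $\sum_j d_\mu(F_j(x), F_{j+1}(x)) \leq 2 C_m L h$, and combining with the initial bound yields $d_\mu(G(x), F_k(x)) \leq (1 + 2 C_m) L h$, uniformly in $k$ and $x$.

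The main obstacle is the last ingredient above: expressing both $F_j(x)$ and $F_{j+1}(x)$ on a common dyadic subinterval as controlled combinations of level-$j$ sets, so that the submetric property applies termwise and the constant $C_m$ emerges naturally from the $m$ stages of Lane-Riesenfeld averaging. The analogous result for metric-average subdivision in \cite{dyn2001spline} provides the template; the present setting substitutes the metric and submetric properties of the measure average where the corresponding properties of the metric average were used there.
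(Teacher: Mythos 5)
Your proposal is correct and follows essentially the route the paper intends: the paper omits the proof of Theorem \ref{SplineKApproximation}, deferring to the proof of Theorem 4.4 in \cite{dyn2001spline}, which is exactly this combination of a triangle-inequality split through the piecewise interpolants, the contraction $d_{k+1}\leq \tfrac{1}{2}d_k$ derived from the interpolation and metric properties of the binary average in the Lane--Riesenfeld stages, and a geometric-series summation. The only point to watch is that the first term should be bounded using the level-$0$ interpolant with the shift $\delta$ handled explicitly (giving a constant like $2L$ rather than $L$), which does not affect the stated conclusion.
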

\begin{corollary}\label{SplineInfApproximation}Under the assumptions of Theorem
\ref{SplineKApproximation}, the distance between the original
set-valued function $G\left(x\right)$ and the limit set-valued
function $F^\infty \left( x \right)$  is bounded by,
\begin{equation*}
\mathop {\max }\limits_x d_\mu  \left( {F^\infty  \left( x
\right),G\left( x \right)} \right) \leq Ch \ .
\end{equation*}
\end{corollary}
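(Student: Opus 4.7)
The plan is to derive this bound by a straightforward triangle inequality argument that couples the two preceding results: the uniform convergence $F_k \to F^\infty$ from Theorem \ref{SplineConvergence} and the $k$-uniform approximation estimate $d_\mu(G(x),F_k(x)) \le Ch$ from Theorem \ref{SplineKApproximation}.

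First I would fix an arbitrary $x \in \mathbb{R}$ and, for each subdivision level $k$, insert the intermediate set $F_k(x)$ via the triangle inequality in the metric space $(\mathfrak{L}_n, d_\mu)$:
\begin{equation*}
d_\mu\bigl(F^\infty(x), G(x)\bigr) \;\le\; d_\mu\bigl(F^\infty(x), F_k(x)\bigr) + d_\mu\bigl(F_k(x), G(x)\bigr).
\end{equation*}
By Theorem \ref{SplineKApproximation}, the second term is bounded by $Ch$ with a constant $C$ that depends only on the degree $m$ of the scheme and in particular \emph{not} on $k$; this is the crucial point that makes the argument work. The first term, on the other hand, does not have an $h$-dependent bound, but by Theorem \ref{SplineConvergence} the sequence $\{F_k\}$ converges to $F^\infty$ uniformly on $\mathbb{R}$, so $d_\mu(F^\infty(x), F_k(x)) \to 0$ as $k \to \infty$, uniformly in $x$.

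Letting $k \to \infty$ in the displayed inequality then yields
\begin{equation*}
d_\mu\bigl(F^\infty(x), G(x)\bigr) \;\le\; Ch,
\end{equation*}
and since the right-hand side is independent of $x$, taking the supremum over $x$ gives the claimed estimate. There is essentially no substantive obstacle here: all the real work has already been done in Theorems \ref{SplineConvergence} and \ref{SplineKApproximation}, and this corollary is simply the observation that passing from the $k$-th iterate to the limit preserves the order-$h$ approximation bound thanks to the uniformity of $C$ in $k$ and the uniformity of the convergence $F_k \to F^\infty$ in $x$.
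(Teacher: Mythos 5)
Your argument is correct and is exactly the standard derivation the paper has in mind: the paper omits the proof of Corollary \ref{SplineInfApproximation}, stating only that it parallels Corollary 4.5 of \cite{dyn2001spline}, and that parallel proof is precisely your triangle inequality $d_\mu(F^\infty(x),G(x))\le d_\mu(F^\infty(x),F_k(x))+d_\mu(F_k(x),G(x))$ followed by letting $k\to\infty$, using the $k$-independence of $C$ from Theorem \ref{SplineKApproximation} and the uniform convergence from Theorem \ref{SplineConvergence}. No issues.
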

The properties of the metric average relative to the Hausdorff
metric, used in the proofs of results analogues to the above
results, are also possessed by the measure average relative to the
symmetric difference metric. Therefore the proofs of Theorems
\ref{SplineConvergence},\ref{SplineKApproximation} and Corollary
\ref{SplineInfApproximation} are similar to the proofs of Theorems
4.3, 4.4 and Corollary 4.5 in \cite{dyn2001spline} respectively, and
are omitted here. Theorem \ref{SplineKApproximation} and  Corollary
\ref{SplineInfApproximation} can be straightforwardly extended to
H\"older-$\nu$ SVFs to obtain approximation order
$O\left(h^\nu\right)$.

Next we use the measure property and the inclusion property, which
are specific to the measure average, to derive further properties of
spline subdivision schemes based on the measure average. As a
consequence of the measure property we have,
\begin{corollary}\label{GeneralMeasureTransition}Let $S$ be an averaging rule defined
for a sequence $\left\{ {f_i } \right\}_{i \in \mathbb{Z}}  \subset
R$ by,
\begin{equation*}
S\left( {\left\{ {f_i } \right\}_{i \in \mathbb{Z}} } \right) =
\sum\limits_i {a_i } f_i \ ,
\end{equation*}
with $a_i  \geq 0,\sum\limits_i {a_i }  = 1$. Let $S^*$ be an
adaptation of $S$ to sets by representing $S$ as a sequence of
repeated binary averages of numbers with non-negative averaging
parameters, and replacing averages of numbers by the measure
averages of sets. Then for any sequence of sets
$\left\{F_i^0\right\}_{i \in \mathbb{Z}}  \subset \mathfrak{J}_n$,
\begin{equation*}
\mu \left( {S^* \left( {\left\{ {F_i } \right\}_{i \in \mathbb{Z}} }
\right)} \right) = S\left( {\left\{ {\mu \left( {F_i } \right)}
\right\}_{i \in \mathbb{Z}} } \right) \ .
\end{equation*}
\end{corollary}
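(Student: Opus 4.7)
The plan is to proceed by induction on the number of binary averaging steps in the representation of $S^*$ as a composition of binary measure averages. Since $S$ is a convex combination with $a_i \geq 0$ and $\sum_i a_i = 1$, any representation of $S$ as a sequence of binary averages of numbers with non-negative weights at each node (as in the Lane--Riesenfeld construction) has the property that the two weights at each binary node sum to $1$ and are both non-negative, hence both lie in $[0,1]$. The corresponding binary measure averages $tA \bigoplus (1-t) B$ therefore fall entirely under the scope of Property \ref{PropMeasure} (the measure property) of List of Properties \ref{PropertiesList}, which is proved in Theorem \ref{GeneralMeasureAverageProperties}.

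For the base case (a single binary node) the claim is exactly the measure property:
\begin{equation*}
\mu\bigl(tA \bigoplus (1-t)B\bigr) = t\mu(A) + (1-t)\mu(B), \qquad t \in [0,1].
\end{equation*}
For the inductive step, assume $S^*$ is given by a binary measure average at its root, $S^* = t\, S^*_L \bigoplus (1-t)\, S^*_R$, where $S^*_L$ and $S^*_R$ are the adaptations to sets of the two sub-expressions $S_L$ and $S_R$ of $S$, each using strictly fewer binary nodes. The inductive hypothesis gives $\mu(S^*_L) = S_L(\{\mu(F_i)\})$ and $\mu(S^*_R) = S_R(\{\mu(F_i)\})$. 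Applying the measure property to the outermost average and then linearity of the real-valued rule $S = tS_L + (1-t)S_R$ yields the claim.

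The only subtle point, and the main thing to verify carefully, is that the closure property (Property \ref{PropClousure}) ensures that the output of every intermediate binary measure average lies in $\mathfrak{J}_n$, so that the measure property can legitimately be invoked at the next step up. This is guaranteed by Theorem \ref{GeneralMeasureAverageProperties}, which asserts $\mathfrak{J}_n$ is closed under the measure average, so the induction proceeds without leaving the class $\mathfrak{J}_n$, and no genericity or zero-measure assumption on the intermediate sets is required.
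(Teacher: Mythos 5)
Your proof is correct and follows the same route the paper intends: the paper states the corollary as an immediate consequence of the measure property and omits the details, and your induction over the binary expression tree, using the closure property to keep intermediate results in $\mathfrak{J}_n$ and the measure property (with $t\in[0,1]$ at every node) to push the weighted average of measures through each step, is exactly the argument being left to the reader. Your observation that the conclusion is independent of the particular binary representation also matches the remark the paper makes immediately after the corollary.
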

Note that the result of Corollary \ref{GeneralMeasureTransition} is
independent of the specific representation of the averaging rule $S$
by repeated binary averages.

It follows from Theorem \ref{SplineConvergence} and the definition
of the metric $d_\mu\left(\cdot, \cdot\right)$ that,
\begin{equation}\label{MeasureLimit}
\mu \left( {F^\infty  \left( x \right)} \right) = \mathop {\lim
}\limits_{k \to \infty } \mu \left( {F_k \left( x \right)} \right)\
.
\end{equation}
Corollary \ref{GeneralMeasureTransition} together with
(\ref{MeasureLimit}) leads to,
\begin{corollary}\label{MeasureTransition}. Let $\left\{F_i^0\right\}_{i \in \mathbb{Z}}  \subset \mathfrak{J}_n$. Let
$F^\infty$ be the limit SVF of the set-valued spline subdivision
scheme applied to $\left\{F_i^0\right\}_{i \in \mathbb{Z}}$ and let
$f^\infty$  be the limit function of the real-valued spline
subdivision scheme applied to $\left\{ \mu\left(F_i^0\right)
\right\}_{i \in \mathbb{Z}}$ . Then,
\begin{equation*}
\mu \left( {F^\infty  \left( x \right)} \right) = f^\infty  \left( x
\right) \ .
\end{equation*}
\end{corollary}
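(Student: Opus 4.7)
The plan is to chain together three ingredients that are already in place: (i) the measure property of the measure average, which is inherited at every binary-averaging stage of the Lane--Riesenfeld algorithm, (ii) Corollary \ref{GeneralMeasureTransition}, which packages this inheritance into a statement about the whole refinement rule, and (iii) equation (\ref{MeasureLimit}), which lets us pass measures through the limit.

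First I would show, by induction on the level $k$, that $\mu(F_i^k) = f_i^k$ for every $i \in \mathbb{Z}$, where $f_i^k$ denotes the real-valued spline iterates starting from the initial data $f_i^0 = \mu(F_i^0)$. The base case is immediate. For the inductive step, observe that the set-valued refinement rule (\ref{SetSplineRule1})--(\ref{SetSplineRule3}) at level $k+1$ is, by construction, the Lane--Riesenfeld expression of the mask $\{a_l^{(m)}\}$ as a sequence of binary measure averages with non-negative weights (all equal to $\tfrac{1}{2}$). Since these weights are non-negative and the mask coefficients sum to one, Corollary \ref{GeneralMeasureTransition} applies and yields $\mu(F_i^{k+1}) = \sum_{j} a_{\lceil (m+1)/2 \rceil + i - 2j}^{(m)}\,\mu(F_j^k) = f_i^{k+1}$.

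Next I would handle the piecewise interpolant $F_k(x)$ defined in (\ref{Fk}). For $x \in [i2^{-k},(i+1)2^{-k}]$ the interpolant is a single binary measure average of $F_i^k$ and $F_{i+1}^k$ with non-negative weights $\lambda(x),\,1-\lambda(x) \in [0,1]$, so the measure property of the measure average (Property \ref{PropMeasure} in List of Properties \ref{PropertiesList}) combined with the step above gives
\begin{equation*}
\mu(F_k(x)) = \lambda(x)\,\mu(F_i^k) + (1-\lambda(x))\,\mu(F_{i+1}^k) = \lambda(x) f_i^k + (1-\lambda(x))f_{i+1}^k = f_k(x),
\end{equation*}
where $f_k(x)$ is precisely the real-valued piecewise linear interpolant in (\ref{realfk}).

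Finally, letting $k \to \infty$, (\ref{MeasureLimit}) gives $\mu(F^\infty(x)) = \lim_{k\to\infty}\mu(F_k(x)) = \lim_{k\to\infty} f_k(x) = f^\infty(x)$, where the last equality is the standard convergence of the real-valued spline subdivision scheme. I do not anticipate a serious obstacle: the work has already been done in proving Corollary \ref{GeneralMeasureTransition} and the measure property of $\bigoplus$. The only small point that needs care is making sure the representation of the refinement rule as binary averages uses non-negative weights throughout, which is guaranteed by the Lane--Riesenfeld form of the spline mask, so Corollary \ref{GeneralMeasureTransition} applies without the extrapolation cases entering the picture.
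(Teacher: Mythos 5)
Your proposal is correct and follows essentially the same route as the paper, which derives this corollary directly from Corollary \ref{GeneralMeasureTransition} combined with (\ref{MeasureLimit}); you have merely spelled out the intermediate induction $\mu(F_i^k)=f_i^k$ and the identification $\mu(F_k(x))=f_k(x)$ that the paper leaves implicit. No gaps.
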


Next we state several results concerning set-valued spline
subdivision schemes applied to monotone data. It is well knows that
real-valued spline subdivision schemes are monotonicity preserving.
Due to the inclusion property of the measure average and relations
(\ref{SetSplineRule1})-(\ref{SetSplineRule3}), spline subdivision
schemes adapted to sets with the measure average are also
monotonicity preserving. A sequence of sets ${\left\{ {F_i }
\right\}_{i \in \mathbb{Z}} }$ is termed monotone non-decreasing
(non-increasing) if $F_i  \subseteq F_{i + 1} \left( {F_i  \supseteq
F_{i + 1} } \right)$. With a similar definition for a non-decreasing
(non-increasing) set valued function we obtain,

\begin{corollary}\label{Monotonicity}Let $\left\{F_i^0\right\}_{i \in \mathbb{Z}}  \subset \mathfrak{J}_n,$ be monotone
non-decreasing (non-increasing). Let $F^\infty$ be the limit SVF of
the set-valued spline subdivision scheme applied to
$\left\{F_i^0\right\}_{i \in \mathbb{Z}}$. Then $F^{\infty}$ is
monotone non-decreasing (non-increasing).
\end{corollary}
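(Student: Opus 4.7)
The plan is to carry monotonicity from level to level through the Lane-Riesenfeld rule, then push it through to the piecewise interpolant at each level, and finally transfer it to the limit via $d_\mu$-convergence.

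First I would show by induction on $k$ that if $\{F_i^0\}_{i\in\mathbb{Z}}$ is monotone non-decreasing then so is $\{F_i^k\}_{i\in\mathbb{Z}}$ for every $k \geq 0$. The inductive step uses the refinement (\ref{SetSplineRule1})--(\ref{SetSplineRule3}). If the sequence $\{F_i^k\}$ is monotone, then $F_i^k \subseteq F_{i+1}^k$, and by the inclusion property in List of Properties \ref{PropertiesList} applied to the nested pair $F_i^k \subseteq F_{i+1}^k$, one has
\begin{equation*}
F_i^k \subseteq \tfrac{1}{2}F_i^k \bigoplus \tfrac{1}{2}F_{i+1}^k \subseteq F_{i+1}^k ,
\end{equation*}
so that the interleaved sequence $\{F_i^{k+1,0}\}$ is monotone non-decreasing. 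A further sub-induction on $j=1,\dots,m-1$ shows each averaging stage (\ref{SetSplineRule2}) preserves monotonicity: assuming $\{F_i^{k+1,j-1}\}$ is monotone, the key observation is that for consecutive indices,
\begin{equation*}
F_i^{k+1,j} \subseteq F_{i+1}^{k+1,j-1} \subseteq F_{i+1}^{k+1,j},
\end{equation*}
where both inclusions come from the inclusion property applied to the corresponding nested pair. After the shift (\ref{SetSplineRule3}), $\{F_i^{k+1}\}$ is monotone, completing the induction.

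Next I would transfer monotonicity to the piecewise interpolant $F_k(x)$ defined in (\ref{Fk}). For $x_1 < x_2$ inside a single dyadic interval $[i 2^{-k},(i+1)2^{-k}]$, using Remark \ref{Symmetry} I write $F_k(x) = (1-\lambda(x))F_{i+1}^k \bigoplus \lambda(x) F_i^k$ so that $1-\lambda(x)$ is the weight on the larger nested set; since $1-\lambda(\cdot)$ is increasing in $x$, the monotonicity of the measure average in its averaging parameter on nested sets (relations (\ref{SimpDiffInclusionEq}) and (\ref{SimpDiffInclusionEq1}), valid for the nested case by Lemma \ref{ContainedMeasureAverageProps}) gives $F_k(x_1) \subseteq F_k(x_2)$. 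For $x_1,x_2$ in different intervals I chain through the knot values: the inclusion property sandwiches $F_k(x_1) \subseteq F_{j}^k \subseteq F_k(x_2)$ at any knot $j2^{-k}$ between them, using monotonicity of $\{F_i^k\}$.

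Finally, I would use Theorem \ref{SplineConvergence} to pass to the limit. Fix $x_1 < x_2$; I want $\mu(F^\infty(x_1)\setminus F^\infty(x_2)) = 0$, which is the meaning of $F^\infty(x_1) \subseteq F^\infty(x_2)$ in $(\mathfrak{L}_n,d_\mu)$. The elementary set-theoretic inclusion
\begin{equation*}
F^\infty(x_1)\setminus F^\infty(x_2) \subseteq \bigl(F^\infty(x_1)\setminus F_k(x_1)\bigr) \cup \bigl(F_k(x_1)\setminus F_k(x_2)\bigr) \cup \bigl(F_k(x_2)\setminus F^\infty(x_2)\bigr)
\end{equation*}
combined with $F_k(x_1) \subseteq F_k(x_2)$ from the previous step makes the middle term empty, and the two remaining terms have $\mu$-measure bounded by $d_\mu(F^\infty(x_1),F_k(x_1)) + d_\mu(F_k(x_2),F^\infty(x_2))$, which tends to $0$ by Theorem \ref{SplineConvergence}. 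The non-increasing case is identical after swapping roles. I expect the main subtlety to be the second step, namely confirming that monotonicity in the averaging parameter of $\bigoplus$ for nested sets does survive the piecewise-defined formula (\ref{Fk}) across and between dyadic intervals; once that is in hand, the metric passage to the limit is routine.
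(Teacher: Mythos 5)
Your proof is correct and follows exactly the route the paper indicates (the paper itself offers only the one-line remark that monotonicity preservation follows from the inclusion property and the refinement rules (\ref{SetSplineRule1})--(\ref{SetSplineRule3})): monotonicity of the data is propagated level-to-level via the inclusion property specialized to nested sets, carried to the piecewise interpolants $F_k$ by the monotonicity of $t \mapsto tA \bigoplus (1-t)B$ for nested $A,B$, and transferred to $F^\infty$ by $d_\mu$-convergence. Your elaboration supplies all the details the paper omits, including the correct interpretation of $F^\infty(x_1) \subseteq F^\infty(x_2)$ as $\mu\left(F^\infty(x_1)\setminus F^\infty(x_2)\right) = 0$ in $\left(\mathfrak{L}_n, d_\mu\right)$.
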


The notion of the speed of a curve in a metric space (\emph{see
e.g.} \cite{burago2001course}, Chapter 2), can be used as an
indication of the "smoothness" of a set-valued function. For a
real-valued $f$ the speed at a point $x$ is,
\begin{equation*}
v_f \left( x \right) = \mathop {\lim }\limits_{\varepsilon \to 0 }
\frac{{\left| {f\left( x \right) - f\left( {x + \varepsilon }
\right)} \right|}} {{\left| \varepsilon  \right|}} \ ,
\end{equation*}
whenever the limit exists. For differentiable $f$, $v_f$ is the
absolute value of the derivative of $f$. We define the velocity of a
SVF $F$,
\begin{equation*}
v_F \left( x \right) = \mathop {\lim }\limits_{\varepsilon \to 0 }
\frac{{d_\mu  \left( {F\left( x \right),F\left( {x + \varepsilon }
\right)} \right)}} {{\left| \varepsilon  \right|}} \ .
\end{equation*}
By combining relation (\ref{IncludedDistance}) with Corollaries
\ref{MeasureTransition} and \ref{Monotonicity} we arrive at,
\begin{corollary}\label{SplineMetricDerivative} Let $\left\{F_i^0\right\}_{i \in \mathbb{Z}}  \subset \mathfrak{J}_n,$ be monotone
non-decreasing (non-increasing). Let $F^\infty$ and $f^\infty$ be
defined as in Corollary \ref{MeasureTransition}, then
\begin{equation*}
v_{F^\infty}\left(x\right)  = v_{f^\infty}\left(x\right)  \ .
\end{equation*}
\end{corollary}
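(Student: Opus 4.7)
The plan is to reduce the set-valued velocity to a real-valued speed by exploiting monotonicity, which lets me convert the symmetric difference metric into an absolute difference of measures via relation (\ref{IncludedDistance}).

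First I would fix $x \in \mathbb{R}$ and consider $\varepsilon > 0$ (the case $\varepsilon < 0$ is symmetric). By Corollary \ref{Monotonicity}, $F^\infty$ is monotone in the set-inclusion sense. In the non-decreasing case, this gives $F^\infty(x) \subseteq F^\infty(x+\varepsilon)$, so by (\ref{IncludedDistance}),
\begin{equation*}
d_\mu\bigl(F^\infty(x), F^\infty(x+\varepsilon)\bigr) = \mu\bigl(F^\infty(x+\varepsilon)\bigr) - \mu\bigl(F^\infty(x)\bigr).
\end{equation*}
In the non-increasing case the roles are exchanged, but in either case the right-hand side equals $\bigl|\mu(F^\infty(x+\varepsilon)) - \mu(F^\infty(x))\bigr|$.

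Next I would apply Corollary \ref{MeasureTransition} to replace $\mu(F^\infty(\cdot))$ by $f^\infty(\cdot)$, obtaining
\begin{equation*}
d_\mu\bigl(F^\infty(x), F^\infty(x+\varepsilon)\bigr) = \bigl|f^\infty(x+\varepsilon) - f^\infty(x)\bigr|.
\end{equation*}
Dividing by $|\varepsilon|$ and letting $\varepsilon \to 0$, the left-hand side tends to $v_{F^\infty}(x)$ whenever the right-hand side tends to $v_{f^\infty}(x)$, and the two limits exist simultaneously with the same value. Since $f^\infty$ is the limit of a real-valued spline subdivision scheme, it is at least continuous (in fact smoother for $m \geq 2$), and the equality holds pointwise wherever either limit exists.

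I do not anticipate a substantive obstacle here; the work is essentially bookkeeping, since Corollaries \ref{MeasureTransition} and \ref{Monotonicity} together with (\ref{IncludedDistance}) already carry the entire content. The only subtle point is ensuring that the absolute value obtained from (\ref{IncludedDistance}) matches $|f^\infty(x+\varepsilon) - f^\infty(x)|$ in both the non-decreasing and non-increasing cases, which is handled by the observation above that monotonicity of $F^\infty$ in the inclusion sense corresponds to monotonicity of $f^\infty = \mu \circ F^\infty$ in the usual sense.
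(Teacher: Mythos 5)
Your argument is correct and is exactly the route the paper takes: the paper derives this corollary by "combining relation (\ref{IncludedDistance}) with Corollaries \ref{MeasureTransition} and \ref{Monotonicity}," which is precisely the chain of reductions you spell out. The only content beyond that citation is the bookkeeping you already supply (inclusion from monotonicity, conversion of $d_\mu$ to a difference of measures, substitution of $f^\infty$ for $\mu\circ F^\infty$, and passage to the limit), so nothing is missing.
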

Under the assumptions of Corollary \ref{SplineMetricDerivative}, we
have that for the spline subdivision of degree $m \geq 2$,
$v_{F^\infty}$ is continuous and has continuous derivatives up to
order $m-2$.

In the next section we adapt to SVFs the 4-point interpolatory
subdivision scheme using the measure average with both positive and
negative averaging parameters.

\section{The 4-point subdivision scheme adapted to sets with the measure average}\label{section4pointScheme}
In the real-valued setting, the 4-point subdivision scheme is
defined by the following refinement rule
\begin{equation}\label{InterpolationRule}
f_{2i}^{k + 1}  = f_i^k  \ ,
\end{equation}
and
\begin{equation}\label{FourPoint}
f_{2i+1}^{k + 1}  =  - w\left( {f_{i - 1}^k  + f_{i + 2}^k }
\right)\\
 + \left( {1/2 + w} \right)\left( {f_i^k  + f_{i + 1}^k }
\right) \ ,
\end{equation}
repeatedly applied to  refine the values $\left\{ {f_i^k :i \in
\mathbb{Z}} \right\} \subset \mathbb{R}$ for $k = 0,1,2...$. Here
$w$ is a \emph{fixed tension parameter}. Usually $w$ is chosen to be
$1/16$, since this value yields the highest approximation order
\cite{dyn19874} and the maximal Holder exponent of the first
derivative of the limit function \cite{deslauriers1989symmetric}.
The rule can also be applied to refine a sequence of points in
$\mathbb{R}^n$.

One can see that the coefficients in (\ref{FourPoint}) sum to one,
so it is a weighted average of the four values
$f_{i-1},...,f_{i+2}$. Rewriting the insertion rule
(\ref{FourPoint}) in terms of binary weighted averages as,
\begin{eqnarray}\label{FourPointRewritten1}
  f_{2i + 1}^{k + 1}  & = & \frac{1}{2}\left( {\left( { - 2w} \right)f_{i - 1}^k  + \left( {1 + 2w} \right)f_i^k } \right) \\
                    \nonumber  & + & \frac{1} {2}\left( {\left( { - 2w} \right)f_{i + 2}^k  + \left( {1 + 2w}\right)f_{i + 1}^k } \right)
                    \ ,
\end{eqnarray}
we adapt the refinement rule defined by (\ref{InterpolationRule})
and (\ref{FourPointRewritten1}) to sets by replacing binary averages
of points by the measure averages of sets.

The refinement rule for the sets $\left\{ {F_i^k :i \in } \right\}
\subset \mathfrak{J}_n $ is,
\begin{equation}\label{RefinementRuleForSets1}
F_{2i}^{k + 1}  = F_i^k  \ ,
\end{equation}
and
\begin{equation}\label{RefinementRuleForSets2}
F_{2i + 1}^{k + 1}  = \frac{1} {2}E_{2i + 1}^{k + 1}  \bigoplus
\frac{1} {2}H_{2i + 1}^{k + 1} \ ,
\end{equation}
with
\begin{equation}E_{2i + 1}^{k + 1}  = \left( { - 2w } \right)F_{i - 1}^k
\bigoplus \left( {1 + 2w } \right)F_i^k \ ,
\end{equation}
 and
\begin{equation}H_{2i + 1}^{k + 1} = \left( { - 2w } \right)F_{i + 2}^k
\bigoplus \left( {1 + 2w } \right)F_{i + 1}^k \ .
\end{equation}
Note that the subdivision with the refinement rule
(\ref{RefinementRuleForSets1})-(\ref{RefinementRuleForSets2}) is
interpolatory.

First we study the convergence of the scheme.
\begin{lemma}\label{SetConvergence} Let $\left\{ {F_i^k :i \in \mathbb{Z}}
\right\} \subset \mathfrak{J}_n$  and define,
\begin{equation*}
 d_k  = \mathop
 {\sup
}\limits_{i \in \mathbb{Z}} d_\mu \left( {F_i^k ,F_{i + 1}^k }
\right) \ ,
\end{equation*}
then
\begin{equation}\label{GeometricDk}
d_k  \leq \left( {\frac{1} {2} + 4w} \right)^k d_0 \ .
\end{equation}
\end{lemma}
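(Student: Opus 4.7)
The plan is to establish the one-step contraction $d_{k+1} \leq (\tfrac{1}{2}+4w)d_k$; iterating this inequality then yields the geometric bound~(\ref{GeometricDk}). Because the even-indexed sets are preserved by~(\ref{RefinementRuleForSets1}), I need to estimate the two quantities $d_\mu(F_{2i}^{k+1},F_{2i+1}^{k+1})$ and $d_\mu(F_{2i+1}^{k+1},F_{2i+2}^{k+1})$, and show both are bounded by $(\tfrac{1}{2}+4w)d_k$. By the symmetry of the rule, the two computations are essentially identical, so I would carry out the first and remark that the second is analogous.

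For the first, I would apply the triangle inequality through the auxiliary set $E_{2i+1}^{k+1}$, splitting
\[
 d_\mu\bigl(F_i^k,F_{2i+1}^{k+1}\bigr)\;\leq\; d_\mu\bigl(F_i^k,E_{2i+1}^{k+1}\bigr)\;+\;d_\mu\bigl(E_{2i+1}^{k+1},F_{2i+1}^{k+1}\bigr).
\]
The first term is controlled by Property~\ref{PropSubMetric} of List of Properties~\ref{PropertiesList} (the submetric property, which holds for all $s,t\in\mathbb{R}$ and is thus the right tool in the presence of a negative weight). Writing $E_{2i+1}^{k+1}$ as the measure average of $F_{i-1}^k$ and $F_i^k$ with parameter $t=-2w$, and using that $F_i^k$ is the same average with parameter $s=0$, the submetric property gives $d_\mu(F_i^k,E_{2i+1}^{k+1})\leq 2w\,d_\mu(F_{i-1}^k,F_i^k)\leq 2w\,d_k$. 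An analogous bound gives $d_\mu(F_{i+1}^k,H_{2i+1}^{k+1})\leq 2w\,d_k$. The second term is controlled by applying the submetric property to the outer average $F_{2i+1}^{k+1}=\tfrac{1}{2}E_{2i+1}^{k+1}\bigoplus\tfrac{1}{2}H_{2i+1}^{k+1}$ (with $s=1$, $t=\tfrac12$), yielding $d_\mu(E_{2i+1}^{k+1},F_{2i+1}^{k+1})\leq\tfrac{1}{2}\,d_\mu(E_{2i+1}^{k+1},H_{2i+1}^{k+1})$.

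The remaining task is to bound $d_\mu(E_{2i+1}^{k+1},H_{2i+1}^{k+1})$, which I would do by a triangle inequality through $F_i^k$ and $F_{i+1}^k$:
\[
 d_\mu\bigl(E_{2i+1}^{k+1},H_{2i+1}^{k+1}\bigr)\;\leq\; d_\mu\bigl(E_{2i+1}^{k+1},F_i^k\bigr)+d_\mu\bigl(F_i^k,F_{i+1}^k\bigr)+d_\mu\bigl(F_{i+1}^k,H_{2i+1}^{k+1}\bigr)\;\leq\;(1+4w)d_k,
\]
using the two bounds obtained above and the definition of $d_k$. Combining everything gives
\[
 d_\mu\bigl(F_{2i}^{k+1},F_{2i+1}^{k+1}\bigr)\;\leq\;2w\,d_k+\tfrac{1}{2}(1+4w)d_k\;=\;\bigl(\tfrac{1}{2}+4w\bigr)d_k,
\]
and the odd-indexed estimate follows by the same argument, interchanging the roles of $E_{2i+1}^{k+1}$ and $H_{2i+1}^{k+1}$. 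Taking suprema in $i$ gives $d_{k+1}\leq(\tfrac12+4w)d_k$, and induction on $k$ completes the proof.

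There is no substantive obstacle in this argument; the only care needed is to invoke the submetric property (Property~\ref{PropSubMetric}) rather than the metric property (Property~\ref{PropMetric}), because the inner averages use the parameter $-2w\notin[0,1]$, where equality of distances is not guaranteed but the Lipschitz-in-parameter bound still applies.
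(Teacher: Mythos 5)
Your proposal is correct and follows essentially the same route as the paper's proof: the same triangle-inequality decomposition through $E_{2i+1}^{k+1}$ and $H_{2i+1}^{k+1}$, the same $2w\,d_k$ bounds on the inner averages, and the same combination giving the contraction factor $\tfrac{1}{2}+4w$. The only cosmetic difference is that the paper invokes the metric property (equality) for the outer average with weights $\tfrac12,\tfrac12$, while you use the submetric inequality there, which suffices.
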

\begin{proof} By the interpolation property and
the submetric property of the measure average (see List of
Properties \ref{PropertiesList}),
\begin{equation}\label{fromEtoFi}
d_\mu \left( {E_{2i + 1}^{k + 1} ,F_i^k } \right) \leq 2wd_\mu
\left( {F_{i - 1}^k ,F_i^k } \right) \leq 2wd_k \ ,
\end{equation}
and,
\begin{equation}
d_\mu \left( {H_{2i + 1}^{k + 1} ,F_{i + 1}^k } \right) \leq 2wd_\mu
\left( {F_{i + 1}^k ,F_{i + 2}^k } \right) \leq 2wd_k \ .
\end{equation}
Therefore,
\begin{equation}
d_\mu \left( {E_{2i + 1}^{k + 1} ,H_{2i + 1}^{k + 1} } \right) \leq
d_\mu \left( {E_{2i + 1}^{k + 1} ,F_i^k } \right) + d_\mu \left(
{F_i^k ,F_{i + 1}^k } \right) + d_\mu \left( {H_{2i + 1}^{k + 1}
,F_{i + 1}^k } \right) \leq \left( {1 + 4w} \right)d_k \ ,
\end{equation}
from which we obtain by using (\ref{RefinementRuleForSets2}) and the
metric property,
\begin{equation}\label{fromEtoFipluls}
d_\mu \left( {F_{2i + 1}^{k + 1} ,E_{2i + 1}^{k + 1} } \right) \leq
\left( {\frac{1} {2} + 2w} \right)d_k \ .
\end{equation}
Thus by the triangle inequality we get from (\ref{fromEtoFi}) and
(\ref{fromEtoFipluls}),
\begin{equation*}
d_\mu \left( {F_{2i}^{k + 1} ,F_{2i + 1}^{k + 1} } \right) \leq
\left( {\frac{1} {2} + 4w} \right)d_k \ .
\end{equation*}
Similarly one gets,
\begin{equation*}
d_\mu \left( {F_{2i + 1}^{k + 1} ,F_{2i + 2}^{k + 1} } \right) \leq
\left( {\frac{1} {2} + 4w} \right)d_k \ .
\end{equation*}
Therefore
\begin{equation*}
 d_{k + 1}  \leq \left( {\frac{1} {2} + 4w}
\right)d_k \ ,
\end{equation*}
and (\ref{GeometricDk}) holds. \qed
\end{proof}
 We conclude from Lemma \ref{SetConvergence},
\begin{lemma}\label{LemmaDistanceAtT}Let $\left\{ {F_k \left( x \right)} \right\}_{k \in \mathbb{Z} +
}$ be the sequence of piecewise interpolants defined as in
(\ref{Fk}), then
\begin{equation}\label{DistanceAtT}
d_\mu \left( {F_k \left( x \right),F_{k + 1} \left( x \right)}
\right) \leq C\left( {\frac{1} {2} + 4w} \right)^{k } \ ,
\end{equation}
with $C = d_0\left( {1 + 4w} \right)$.
\end{lemma}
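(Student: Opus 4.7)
The plan is to bound $d_\mu(F_k(x), F_{k+1}(x))$ uniformly in $x$ by a multiple of $d_k$, and then invoke Lemma \ref{SetConvergence} for the geometric decay $d_k \leq (\frac{1}{2}+4w)^k d_0$. First I would fix $x$ in a dyadic interval $[i2^{-k}, (i+1)2^{-k}]$ and, using Remark \ref{Symmetry}, reduce to the left half $x \in [i2^{-k}, (i+\frac{1}{2})2^{-k}]$; the right half is handled by the same argument with the roles of $F_i^k$ and $F_{i+1}^k$ interchanged. Setting $\lambda = (i+1)-x2^k \in [\frac{1}{2},1]$ gives $F_k(x) = \lambda F_i^k \bigoplus (1-\lambda) F_{i+1}^k$, while, since $F_{2i}^{k+1} = F_i^k$, at level $k+1$ we have $F_{k+1}(x) = \mu F_i^k \bigoplus (1-\mu) F_{2i+1}^{k+1}$ with $\mu = (2i+1)-x2^{k+1} \in [0,1]$; a direct check shows $1-\mu = 2(1-\lambda)$.

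Next I would split by the triangle inequality through $F_i^k$. The metric property (Property \ref{PropMetric}) applies to both pieces because all averaging parameters lie in $[0,1]$, giving $d_\mu(F_k(x), F_i^k) \leq (1-\lambda) d_k$ and $d_\mu(F_i^k, F_{k+1}(x)) = (1-\mu)\, d_\mu(F_i^k, F_{2i+1}^{k+1})$. The key auxiliary bound $d_\mu(F_i^k, F_{2i+1}^{k+1}) \leq (\frac{1}{2}+4w) d_k$ is already implicit in the proof of Lemma \ref{SetConvergence}: combining (\ref{fromEtoFi}) and (\ref{fromEtoFipluls}) via the triangle inequality yields exactly this estimate. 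Substituting $1-\mu = 2(1-\lambda)$ and using $1-\lambda \leq \frac{1}{2}$ collapses the two contributions to $(1-\lambda)(2+8w) d_k \leq (1+4w) d_k$, and Lemma \ref{SetConvergence} then produces the claim with $C = (1+4w) d_0$.

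The main obstacle is purely bookkeeping: correctly identifying the averaging parameters at two consecutive subdivision levels and verifying the identity $1-\mu = 2(1-\lambda)$, which is what makes the two metric-property bounds telescope into a constant multiple of $d_k$ rather than something worse. Once this is in place, the rest is direct application of the properties of the measure average together with the intermediate estimates already derived in the proof of Lemma \ref{SetConvergence}.
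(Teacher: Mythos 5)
Your proposal is correct and follows essentially the same route as the paper: split at the midpoint of the dyadic interval, apply the triangle inequality through the common value $F_i^k = F_{2i}^{k+1}$, bound each piece via the metric property, and invoke the geometric decay of $d_k$ from Lemma \ref{SetConvergence}. Your explicit tracking of $1-\mu = 2(1-\lambda)$ is a slightly sharper bookkeeping of what the paper does with the cruder bounds $1-\lambda \leq \tfrac{1}{2}$ and $1-\mu \leq 1$, but it collapses to the same constant $C = d_0(1+4w)$.
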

\begin{proof}
For $i2^{ - k} \leq x \leq  \left( {i + \frac{1}{2}} \right)2^{ -
k}$, using $F_i^k = F_{2i}^{k+1}$,
\begin{equation*}
d_\mu  \left( {F_k \left( x \right),F_{k + 1} \left( x \right)}
\right) \leq d_\mu  \left( {F_k \left( x \right),F_i^k } \right) +
d_\mu  \left( {F_{2i}^{k + 1} ,F_{k + 1} \left( x \right)} \right)
\leq \frac{1} {2}d_k  + d_{k + 1}  \ ,
\end{equation*}
and in view of (\ref{GeometricDk}),
\begin{equation*}
d_\mu \left( {F_k \left( x \right),F_{k+1} \left(x\right)} \right)
\leq \frac{1} {2}d_0 \left( {\frac{1} {2} + 4w} \right)^{k }  + d_0
\left( {\frac{1} {2} + 4w} \right)^{k+1} \ ,
\end{equation*}
leading to the claim of the lemma in this case. A similar argument
for $\left( {i + \frac{1} {2}} \right)2^{ - k}  \leq x \leq \left(
{i + 1} \right)2^{ - k}$ with $F_i^k$ replaced by $F_{i+1}^k$
completes the proof. \qed
\end{proof}

\begin{theorem}\label{FourPointConvergence} The sequence of set-valued functions $\left\{ {F_k \left( x \right)} \right\}_{k \in \mathbb{Z} + }
$ converges uniformly to a continuous set-valued function
$F^\infty\left( x \right) : \mathbb{R} \to \mathfrak{L}_n$ whenever
$w < \frac{1}{8}$.
\end{theorem}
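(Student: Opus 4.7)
The plan is to show that $\{F_k(x)\}_{k\in\mathbb{Z}_+}$ is uniformly Cauchy in the complete metric space $(\mathfrak{L}_n,d_\mu)$ and that each $F_k$ is continuous, so that the uniform limit is itself continuous.

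First I would set $q = \tfrac{1}{2}+4w$ and observe that $w<\tfrac{1}{8}$ gives $q<1$. Lemma~\ref{LemmaDistanceAtT} then yields $d_\mu(F_k(x),F_{k+1}(x))\le Cq^k$ with a constant $C$ independent of $x$. For any $k<l$, a telescoping triangle inequality gives
\begin{equation*}
d_\mu\bigl(F_k(x),F_l(x)\bigr) \le \sum_{j=k}^{l-1} Cq^j \le \frac{Cq^k}{1-q},
\end{equation*}
uniformly in $x\in\mathbb{R}$. Hence $\{F_k(x)\}$ is Cauchy in $(\mathfrak{L}_n,d_\mu)$ for every fixed $x$, and by completeness of this metric space (noted in Section~\ref{sectionPreliminaries}) the pointwise limit $F^\infty(x)\in\mathfrak{L}_n$ exists. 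Since the bound is uniform in $x$, letting $l\to\infty$ gives $\sup_x d_\mu(F_k(x),F^\infty(x))\le Cq^k/(1-q)$, which is exactly uniform convergence.

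Next I would verify that each $F_k$ is continuous on $\mathbb{R}$. On a dyadic interval $[i2^{-k},(i+1)2^{-k}]$ we have $F_k(x)=\lambda(x)F_i^k\bigoplus(1-\lambda(x))F_{i+1}^k$ with $\lambda(x)\in[0,1]$, so the metric property (Property~\ref{PropMetric} in the List of Properties) yields
\begin{equation*}
d_\mu\bigl(F_k(x_1),F_k(x_2)\bigr) = |\lambda(x_1)-\lambda(x_2)|\,d_\mu(F_i^k,F_{i+1}^k) = 2^k|x_1-x_2|\,d_\mu(F_i^k,F_{i+1}^k),
\end{equation*}
so $F_k$ is Lipschitz on each dyadic piece. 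The interpolation property (Property~\ref{PropThroughTheEnds}) ensures that the two one-sided limits at a dyadic knot coincide with $F_i^k$, so $F_k$ is continuous on all of $\mathbb{R}$.

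Finally, a standard $\varepsilon/3$ argument in the metric space $(\mathfrak{L}_n,d_\mu)$ shows that the uniform limit of continuous SVFs is continuous: choose $k$ so that $2Cq^k/(1-q)<\varepsilon/3$, and then use continuity of $F_k$ at the given point. I expect the argument to be essentially routine given Lemma~\ref{LemmaDistanceAtT}; the only mildly delicate point is confirming continuity of $F_k$ at the dyadic knots, which follows cleanly from the interpolation property as sketched above.
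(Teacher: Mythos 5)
Your proposal is correct and follows essentially the same route as the paper: apply Lemma~\ref{LemmaDistanceAtT}, telescope via the triangle inequality to get a geometric tail bound uniform in $x$ (noting $1-q=\tfrac12-4w>0$ exactly when $w<\tfrac18$), invoke completeness of $(\mathfrak{L}_n,d_\mu)$, and conclude continuity of the uniform limit. The only difference is that you spell out the continuity of each $F_k$ via the metric and interpolation properties, which the paper simply asserts "by definition."
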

\begin{proof} By definition the functions $\left\{ {F_k \left( x \right)} \right\}_{k \in \mathbb{Z} + } $
are continuous. By the triangle inequality,
\begin{equation*}
d_\mu \left( {F_k \left( x \right),F_{k + M} \left( x \right)}
\right) \leq \sum\limits_{i = k}^{k + M - 1} {d_\mu \left( {F_i
\left( x \right),F_{i + 1} \left( x \right)} \right)} \ .
\end{equation*}
From Lemma \ref{LemmaDistanceAtT} and by the assumption $w <
\frac{1}{8}$,
\begin{equation}\label{ConvergenceBound}
d_\mu  \left( {F_k \left( x \right),F_{k + M} \left( x \right)}
\right) \leq \sum\limits_{i = k}^{k + M - 1} {d_0 \left( {1 + 4w}
\right)} \left( {\frac{1} {2} + 4w} \right)^i  \leq d_0 \left( {1 +
4w} \right)\left( {\frac{1} {2} + 4w} \right)^k \frac{1}
{{\frac{1}{2} - 4w}} \ .
\end{equation}
We observe from (\ref{ConvergenceBound}) that for $w < \frac{1}{8}$,
$ \left\{ {F_k \left\{ x \right\}} \right\}_{k \in \mathbb{Z}_ +  }$
is a Cauchy sequence in the metric space $\left\{ {\mathfrak{J}_n
,d_\mu  } \right\}$, and consequently it is also a Cauchy sequence
in the metric space of Lebesgue measurable sets with the metric
$d_\mu\left(\cdot, \cdot\right)$, $\left\{ {\mathfrak{L}_n ,d_\mu  }
\right\}$. Since $\left\{ {\mathfrak{L}_n ,d_\mu  } \right\}$ is a
complete metric space, the sequence $ \left\{ {F_k \left\{ x
\right\}} \right\}_{k \in \mathbb{Z}_ +  }$ converges to $ F^\infty
\left( x \right) \in \mathfrak{L}_n$. The convergence is uniform in
$x$ due to (\ref{ConvergenceBound}), consequently $ F^\infty \left(
x \right)$ is continuous. \qed
\end{proof}
Next we derive results concerning approximation of H\"older
continuous SVFs by the 4-point subdivision scheme.
\begin{theorem}\label{FourPointKApprox} Let $G : \mathbb{R} \to \mathfrak{J}_n$ be $\nu$-H\"older continuous,
and let the initial sets be given by $F_i^0  = G\left( {\delta  +
ih} \right)$  $i \in \mathbb{Z}$ with $\delta \in [0,h)$ and $h >
0$. Then for $F_k\left(x\right)$ given by (\ref{Fk}),
\begin{equation}\label{FkApproximationOrder}
d_\mu \left( {G \left( x \right),F_k \left( x \right)} \right) \leq
Ch^\nu \ ,
\end{equation}
where $C = \left( {\frac{1}{{2^\nu  }} + \frac{1}{2} + \frac{{1 +
4w}}{{1/2 - 4w}}} \right)H$ and $H$ is the H\"older constant of $G$.

\end{theorem}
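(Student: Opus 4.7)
The plan is to split the error via the triangle inequality,
\begin{equation*}
d_\mu(G(x), F_k(x)) \le d_\mu(G(x), F_0(x)) + d_\mu(F_0(x), F_k(x)),
\end{equation*}
and to bound the two summands separately. The first term measures how well the initial piecewise interpolant resembles $G$, and the second measures how much the iterative refinement moves this interpolant.

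For the first term, I fix $x$, choose $i$ so that $x \in [\delta+ih,\, \delta+(i+1)h]$, and set $t = (x - \delta - ih)/h \in [0,1]$, so that $F_0(x) = (1-t)F_i^0 \bigoplus t F_{i+1}^0$ (with the piecewise interpolant read from (\ref{Fk}) after rescaling to the $h$-spaced grid, exactly as in Corollary \ref{PieceApprox}). I route the triangle inequality through whichever endpoint sample is closer: $F_i^0 = G(\delta + ih)$ when $t \le 1/2$, otherwise $F_{i+1}^0 = G(\delta + (i+1)h)$. One leg is bounded by $\nu$-H\"older continuity of $G$, the other by the metric property of the measure average (Property \ref{PropMetric}), which gives $d_\mu(F_i^0, F_0(x)) = t\, d_\mu(F_i^0, F_{i+1}^0) \le t\, H h^\nu$. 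For $t \le 1/2$ this yields
\begin{equation*}
d_\mu(G(x), F_0(x)) \le H(th)^\nu + t \cdot Hh^\nu \le Hh^\nu \Bigl(\tfrac{1}{2^\nu} + \tfrac{1}{2}\Bigr),
\end{equation*}
and the symmetric argument through $F_{i+1}^0$ produces the same bound for $t \ge 1/2$.

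For the second term, I telescope through the refinement levels,
\begin{equation*}
d_\mu(F_0(x), F_k(x)) \le \sum_{j=0}^{k-1} d_\mu(F_j(x), F_{j+1}(x)),
\end{equation*}
and invoke Lemma \ref{LemmaDistanceAtT}. The H\"older sampling gives $d_0 = \sup_i d_\mu(F_i^0, F_{i+1}^0) \le Hh^\nu$, so the lemma delivers $d_\mu(F_j(x), F_{j+1}(x)) \le Hh^\nu(1+4w)(\tfrac{1}{2}+4w)^j$. The standing assumption $w < 1/8$ makes the ratio strictly less than $1$, and summing the geometric series produces the bound $Hh^\nu \cdot (1+4w)/(1/2 - 4w)$, independent of $k$. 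Adding the two contributions assembles exactly the constant $C = \bigl(\tfrac{1}{2^\nu} + \tfrac{1}{2} + \tfrac{1+4w}{1/2-4w}\bigr) H$ claimed in the theorem.

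The main obstacle is nothing conceptually new; it is merely the bookkeeping needed to pass from the unit-spaced parameterization of (\ref{Fk}) to the $h$-spaced grid implicit in the samples $F_i^0 = G(\delta+ih)$, so that $d_0$ acquires the correct factor $h^\nu$. This is the same rescaling used implicitly in the proof of the spline analogue (Theorem \ref{SplineKApproximation}), and once it is accounted for, no further technical subtleties intervene.
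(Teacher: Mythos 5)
Your proposal is correct and follows essentially the same route as the paper: the paper's proof also passes through the nearest sample $F_i^0$ and the level-zero interpolant $F_0(x)$ (writing the three-term triangle inequality in one step rather than two), bounds the first leg by H\"older continuity as $H(h/2)^\nu$, the second by the metric property as $\tfrac{1}{2}d_0$, and the third by the already-established geometric bound (\ref{ConvergenceBound}) — which is precisely the telescoping of Lemma \ref{LemmaDistanceAtT} that you carry out explicitly — before substituting $d_0 \le Hh^\nu$. No substantive difference.
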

\begin{proof}
Without loss of generality assume that $\delta = 0$. Let $x$ be such
that $ih \leq x \leq  \left(i + \frac{1}{2}\right)h$. From the
triangle inequality,
\begin{equation*}
d_\mu \left( {G\left( x \right),F_k \left( x \right)} \right) \leq
d_\mu \left( {G\left( x \right),F_i } \right) + d_\mu \left( {F_i
,F_0 \left( x \right)} \right) + d_\mu \left( {F_0 \left( x
\right),F_k \left( x    \right)} \right) \ .
\end{equation*}
Using the H\"older continuity of $G$, the metric property of the
measure average and (\ref{ConvergenceBound}) we get,
\begin{equation}\label{SomeMoreConvergence}
d_\mu  \left( {G\left( x \right),F_k \left( x \right)} \right)
\leqslant H\left( {\frac{1} {2}h} \right)^\nu   + \frac{1} {2}d_0  +
\frac{{1 + 4w}} {{\frac{1} {2} - 4w}}d_0  \ .
\end{equation}
Now $d_0 \leq Hh^\nu$ with $H$ the H\"older constant of $G$, and
therefore (\ref{SomeMoreConvergence}) implies
(\ref{FkApproximationOrder}). \qed
\end{proof}
\begin{corollary}\label{FourPointInfApprox}Under the assumptions of Theorem
\ref{FourPointKApprox}, the distance between the original set-valued
function $G\left(x\right)$ and the limit set-valued function
$F^\infty \left( x \right)$  is bounded by,
\begin{equation*}
\mathop {\max d_\mu \left( {F^\infty  \left( x \right),G\left( x
\right)} \right)}  \leq C h^\nu \ ,
\end{equation*}
with $C$ as in Theorem \ref{FourPointKApprox} .
\end{corollary}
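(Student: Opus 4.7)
The plan is to deduce this from Theorem \ref{FourPointKApprox} together with the uniform convergence $F_k(x) \to F^\infty(x)$ established in Theorem \ref{FourPointConvergence}. Both bounds live in the same metric space $(\mathfrak{L}_n, d_\mu)$, so the argument is just a passage-to-the-limit inside the triangle inequality, uniform in $x$.

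First I would fix an arbitrary $x$ and apply the triangle inequality in $(\mathfrak{L}_n, d_\mu)$ to write
\begin{equation*}
d_\mu\left(G(x), F^\infty(x)\right) \leq d_\mu\left(G(x), F_k(x)\right) + d_\mu\left(F_k(x), F^\infty(x)\right).
\end{equation*}
Theorem \ref{FourPointKApprox} bounds the first term uniformly in $k$ and $x$ by $Ch^\nu$ with the constant $C$ as stated. The second term tends to $0$ as $k \to \infty$, uniformly in $x$, by the uniform convergence established in Theorem \ref{FourPointConvergence} (equivalently, by the Cauchy bound (\ref{ConvergenceBound}) for $w < \tfrac{1}{8}$, which passes to the limit $M \to \infty$ and gives $d_\mu(F_k(x), F^\infty(x)) \to 0$ uniformly).

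Letting $k \to \infty$ in the triangle inequality then yields $d_\mu(G(x), F^\infty(x)) \leq Ch^\nu$ for every $x$, and taking the supremum over $x$ gives the claimed bound with the same constant $C$.

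There is no real obstacle; the only point to verify is that the sets $F^\infty(x)$ are meaningful limits in $(\mathfrak{L}_n, d_\mu)$, which is guaranteed by the completeness of this space used in the proof of Theorem \ref{FourPointConvergence}. Consequently the proof is a direct corollary and would be written in essentially one display plus one sentence.
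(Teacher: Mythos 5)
Your argument is correct and is precisely the intended one: the paper omits the proof of this corollary, but the standard passage to the limit — triangle inequality through $F_k(x)$, the uniform-in-$k$ bound $d_\mu(G(x),F_k(x))\leq Ch^\nu$ from Theorem \ref{FourPointKApprox}, and the uniform convergence $F_k\to F^\infty$ from Theorem \ref{FourPointConvergence} — is exactly what is meant, and it preserves the constant $C$. No gaps.
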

Since in extrapolation the measure property does not hold, there is
no result for the 4-point scheme similar to that in Corollary
\ref{MeasureTransition} for spline subdivision schemes.

\section{Computational examples}\label{sectionComputation}
The algorithms for the computation of the measure average along with
implementation details and the numerical evaluation of our methods
applied to the reconstruction of 3D objects from cross-sections are
given in \cite{kels2011reconstruction}. Here we provide several
computational examples illustrating the theory presented in the
previous sections.

In our examples we use the adaptations to sets with the measure
average of the piecewise linear interpolation (\emph{c.f.} Section
\ref{sectionMeasureAverage}), Chaikin subdivision scheme
(\emph{c.f.} Section \ref{sectionSplineSubdivision}) and the 4-point
subdivision scheme  (\emph{c.f.} Section \ref{section4pointScheme}).
Chaikin scheme is implemented using the Lane-Riesenfeld algorithm as
described in Section \ref{sectionSplineSubdivision}.

While the subdivision schemes discussed so far are defined over the
whole real axis, computations require to deal with a finite number
of sets, ant therefore to consider boundary rules for these schemes.
Boundary rules for the real-valued subdivision schemes are discussed
in (\cite{sabin2010analysis}, Chapter 32).

Assume that at the k-th level of the subdivision, we have the sets
$F_0^k,...,F_n^k$, assigned to equidistant points $x_0^k,...,x_n^k$.
For Chaikin scheme, the refinement rules at the boundaries are,
\begin{equation*}
\begin{array}{*{20}c}
   {F_0^{k + 1}  = F_0^k ,} & \qquad \qquad{F_1^{k + 1}  = \frac{1}
{2}F_0^k  \bigoplus \frac{1}
{2}F_1^k ,}   \\\\

   {F_{2n - 2}^{k + 1}  = \frac{1}
{2}F_{n - 1}^k  \bigoplus \frac{1}
{2}F_n^k ,} & \qquad \qquad{F_{2n - 1}^{k + 1}  = F_n^k ,}\\
\end{array}
\end{equation*}
and ${F_{i}^k }$, $i = 2,...,2n-3$ are computed using relations
(\ref{SetSplineRule1})-(\ref{SetSplineRule3}). With these boundary
rules the limit of the Chaikin scheme interpolates the sets at the
endpoints. For the 4-point scheme, the modified refinement rules at
the boundaries are,
\[
\begin{array}{*{20}c}
   {F_1^{k + 1}  = \frac{1}
{2}F_0^k  \bigoplus \frac{1} {2}F_1^k ;} & \qquad \qquad{F_{2n -
1}^{k + 1}  = \frac{1} {2}F_{n - 1}^k  \bigoplus \frac{1}
{2}F_n^k } , \\
\end{array}
\]
while $F_i^{k + 1}$,  $i = 0,2,3,...,2n-3,2n-2,2n$ are obtained
using relations
(\ref{RefinementRuleForSets1})-(\ref{RefinementRuleForSets2}). In
both schemes, the refined sets at level $k+1$ are assigned to
points, which are obtained from $x_0^k,...,x_n^k$  by applying rules
analogous to those applied to the sets.

In the first example we apply the three aforementioned methods to a
collection of eight sets $F_0 ...,F_7 $ with varying topologies,
located on equidistant parallel planes. The results are visualized
in Figures \ref{fig:PeiceWiseInterpolation} -
\ref{fig:RecontructionProcess4Point}. Note that in all examples only
boundaries of the sets are depicted. It can be observed from Figure
\ref{fig:PeiceWiseInterpolation}, that the piecewise interpolation
indeed passes through the original sets, but the transitions between
pairs of original consecutive sets are noticeable. Figure
\ref{fig:RecontructionProcessSpline} demonstrates the smoothing
effect of Chaikin subdivision, however the limit function does not
pass through the original sets. Finally, the limit of the 4-point
subdivision scheme in Figure \ref{fig:RecontructionProcess4Point}
has a certain smoothing effect, and yet it passes through the
original sets. The above geometric behavior of the three set-valued
methods is in analogy with the well known features of the
corresponding real-valued methods.

In Figure \ref{fig:Measure1} we plot $\mu
\left(F^{[i]}\left(x\right)\right)$, where $F^{[i]}\left(x\right)$,
$i = 1,2,3$, are the SVFs in Figures
\ref{fig:PeiceWiseInterpolation} -
\ref{fig:RecontructionProcess4Point} respectively. In accordance
with Corollaries \ref{GeneralMeasureTransition} and
\ref{MeasureTransition}, $\mu \left( {F^{[1]} \left( x \right)}
\right)$ and $\mu \left( {F^{[2]} \left( x \right)} \right)$ are
equal to the piecewise linear interpolant and the limit of Chaikin
subdivision applied to $\mu \left( {F_i } \right)$, $i = 0,...,7$.
While a result similar to Corollary \ref{MeasureTransition} does not
hold for the 4-point scheme, $\mu \left( {F_3 \left( x \right)}
\right)$ appears as a smooth curve interpolating the data $\mu
\left( {F_0 } \right),...,\mu \left( {F_7 } \right)$.

\begin{figure}[h!!!!!]
\begin{center}
\begin{tabular} {c c c}
\emph{a} & \emph{b} & \emph{c} \tabularnewline
\includegraphics[scale=0.16]{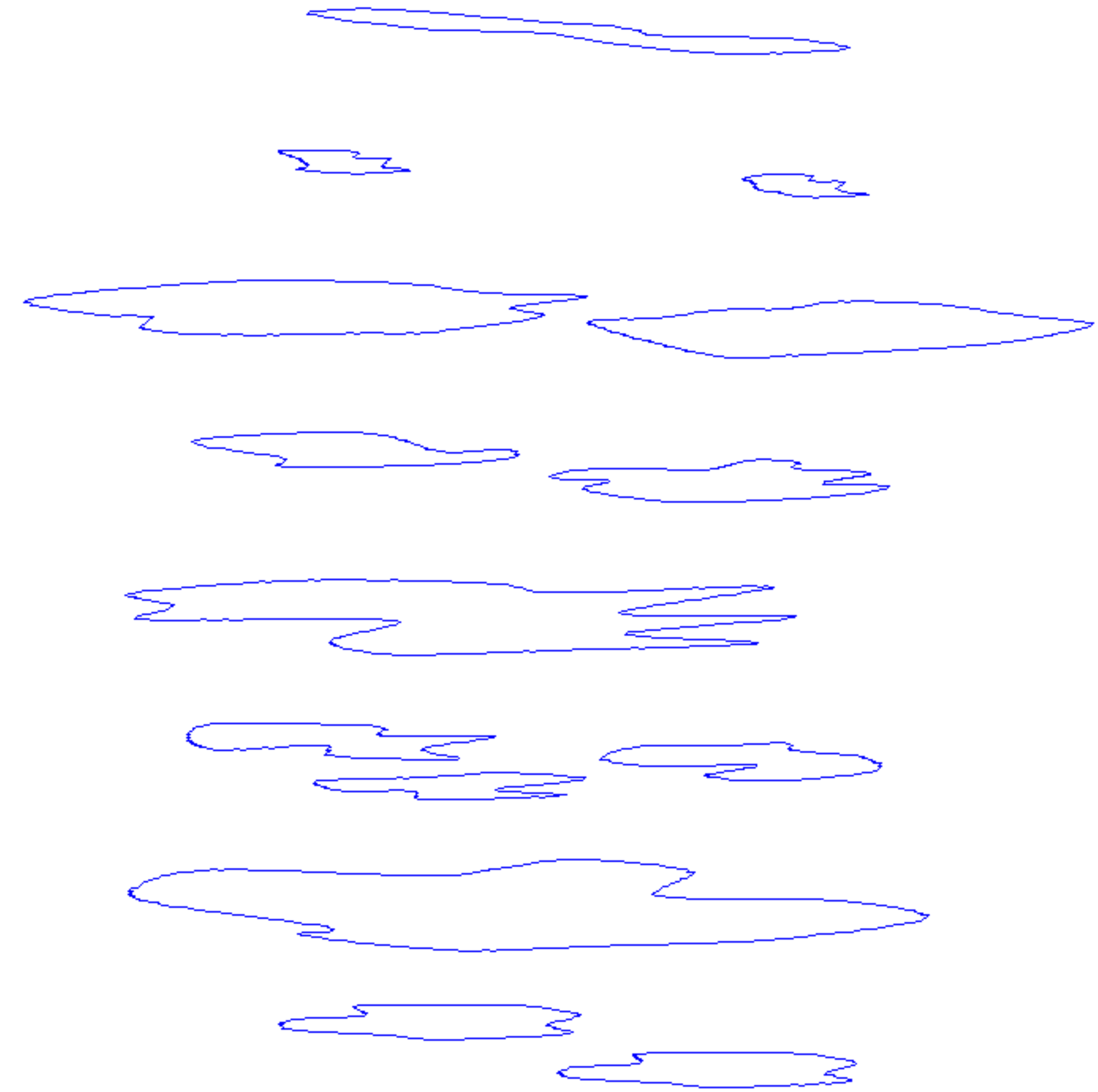} &
\includegraphics[scale=0.16]{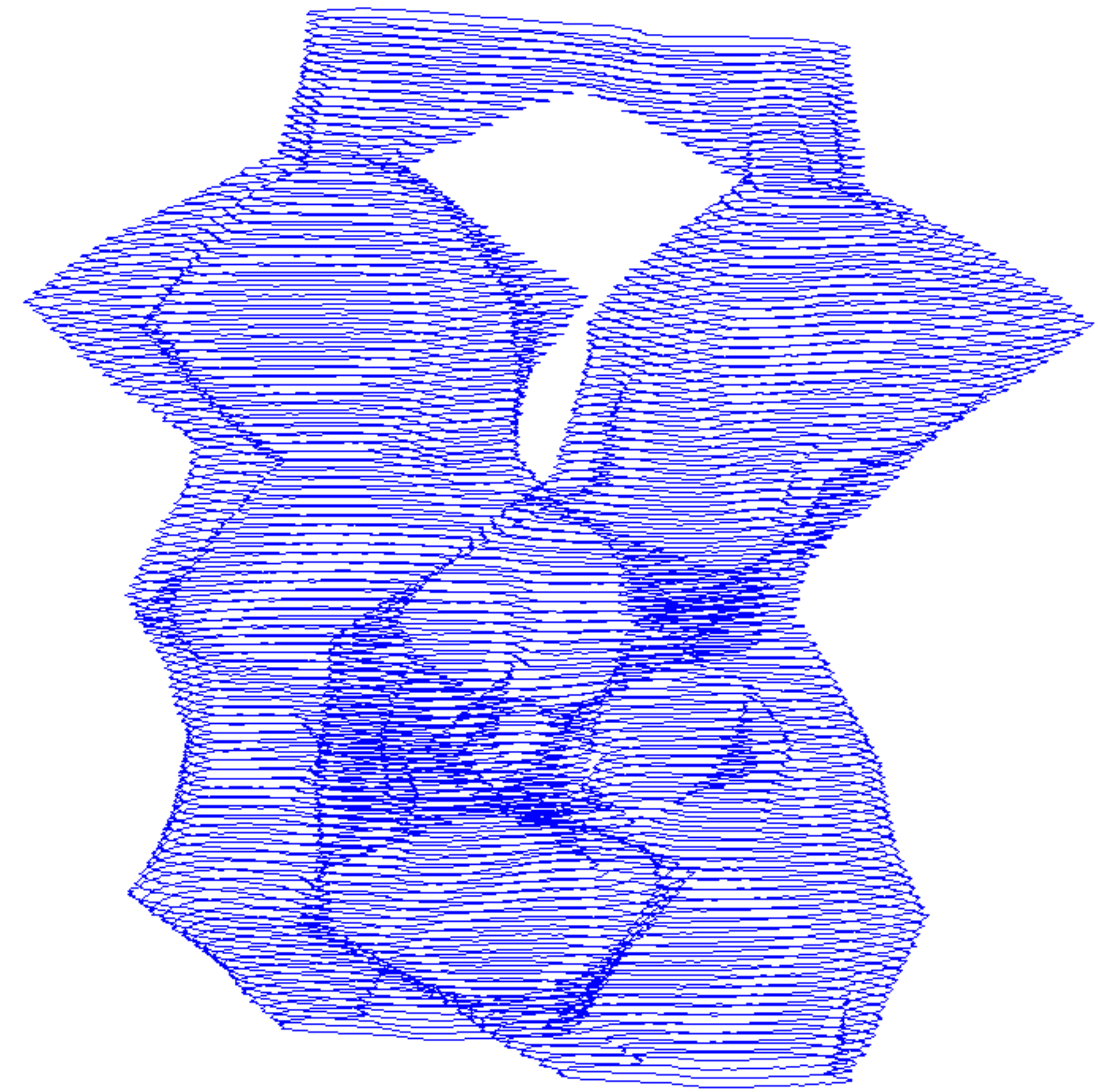} &
\includegraphics[scale=0.16]{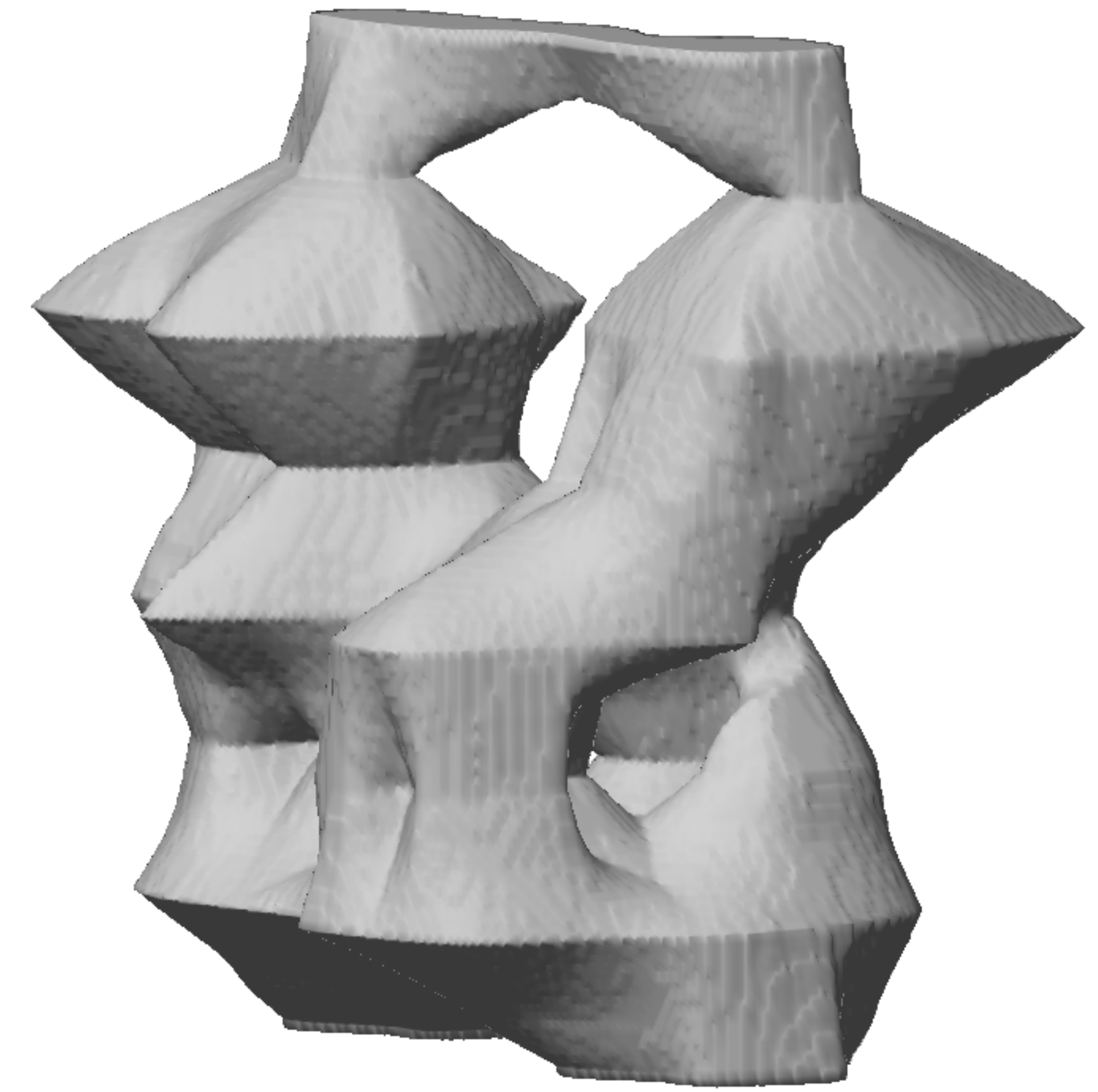}
\end{tabular}
\end{center}
\caption{Piecewise interpolation of sets: a. the initial sets; b.
15 sets introduced between each pair of consecutive sets using the
measure average; c. visualization of the resulting SVF as a 3D
object.} \label{fig:PeiceWiseInterpolation}
\end{figure}

\begin{figure}[h!!!!!]
\begin{center}
\begin{tabular}{c   c  c}
\emph{a} & \emph{b} & \emph{c} \tabularnewline
\includegraphics[scale=0.16]{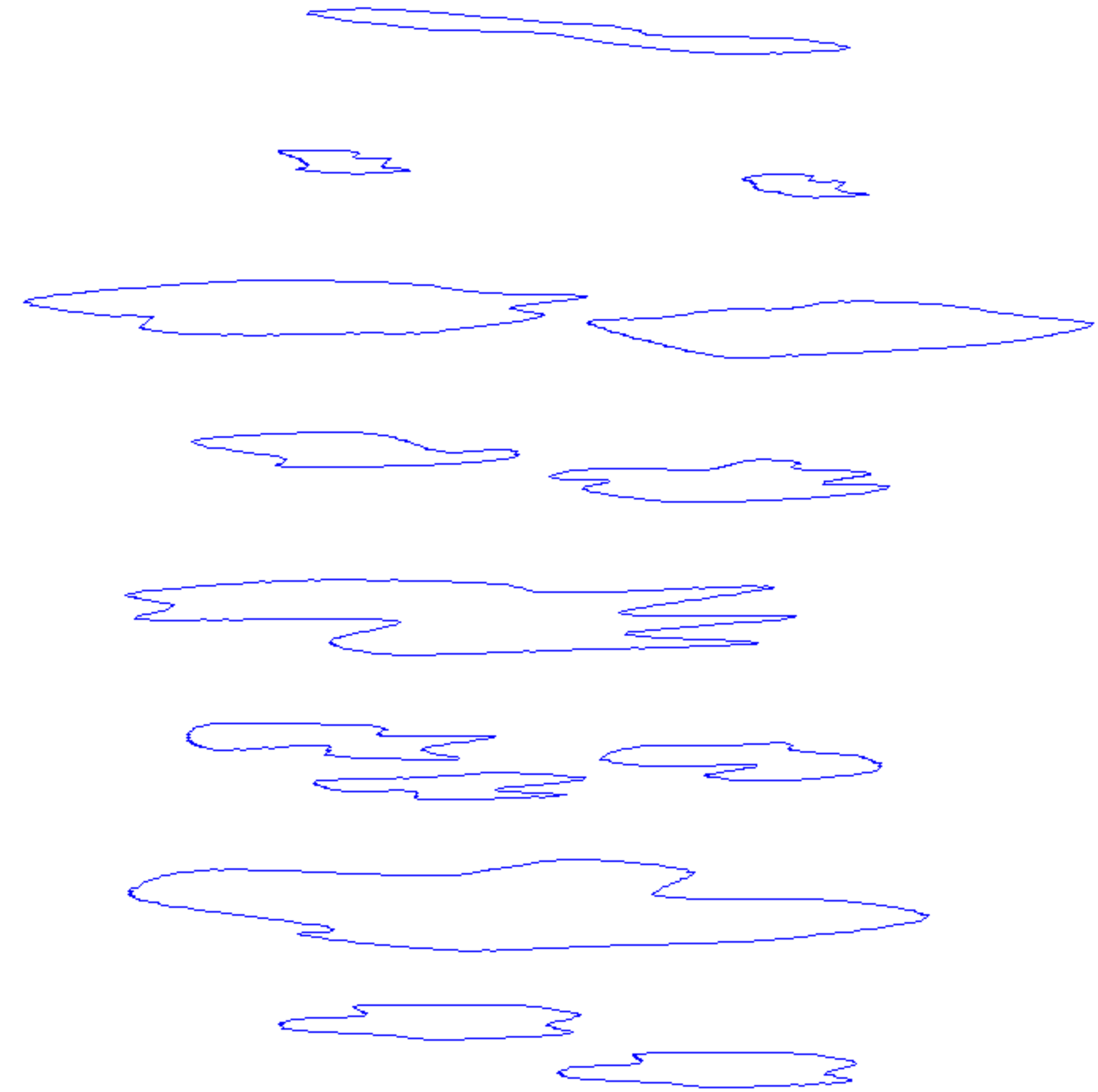} & \includegraphics[scale=0.16]{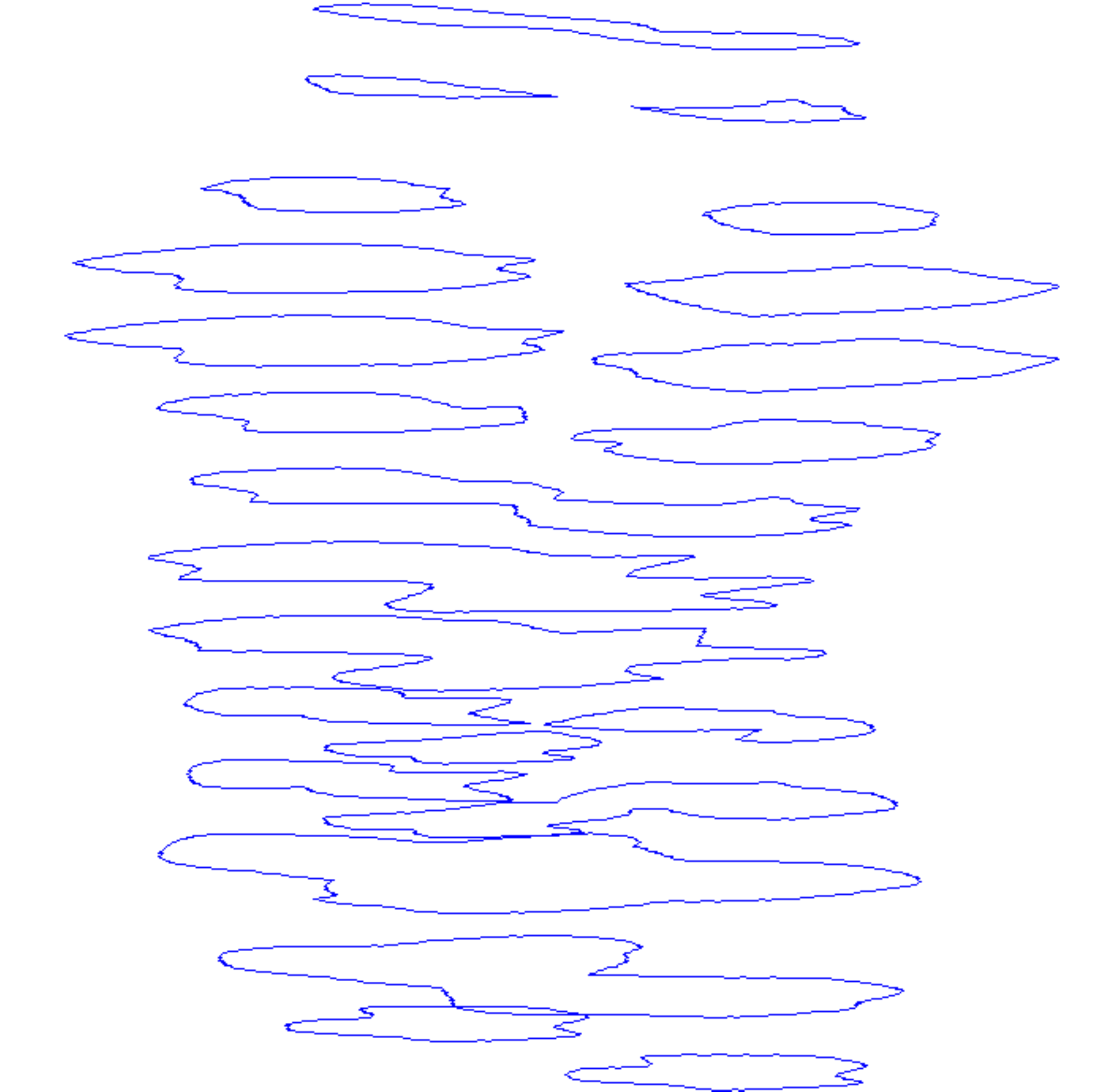} & \includegraphics[scale=0.16]{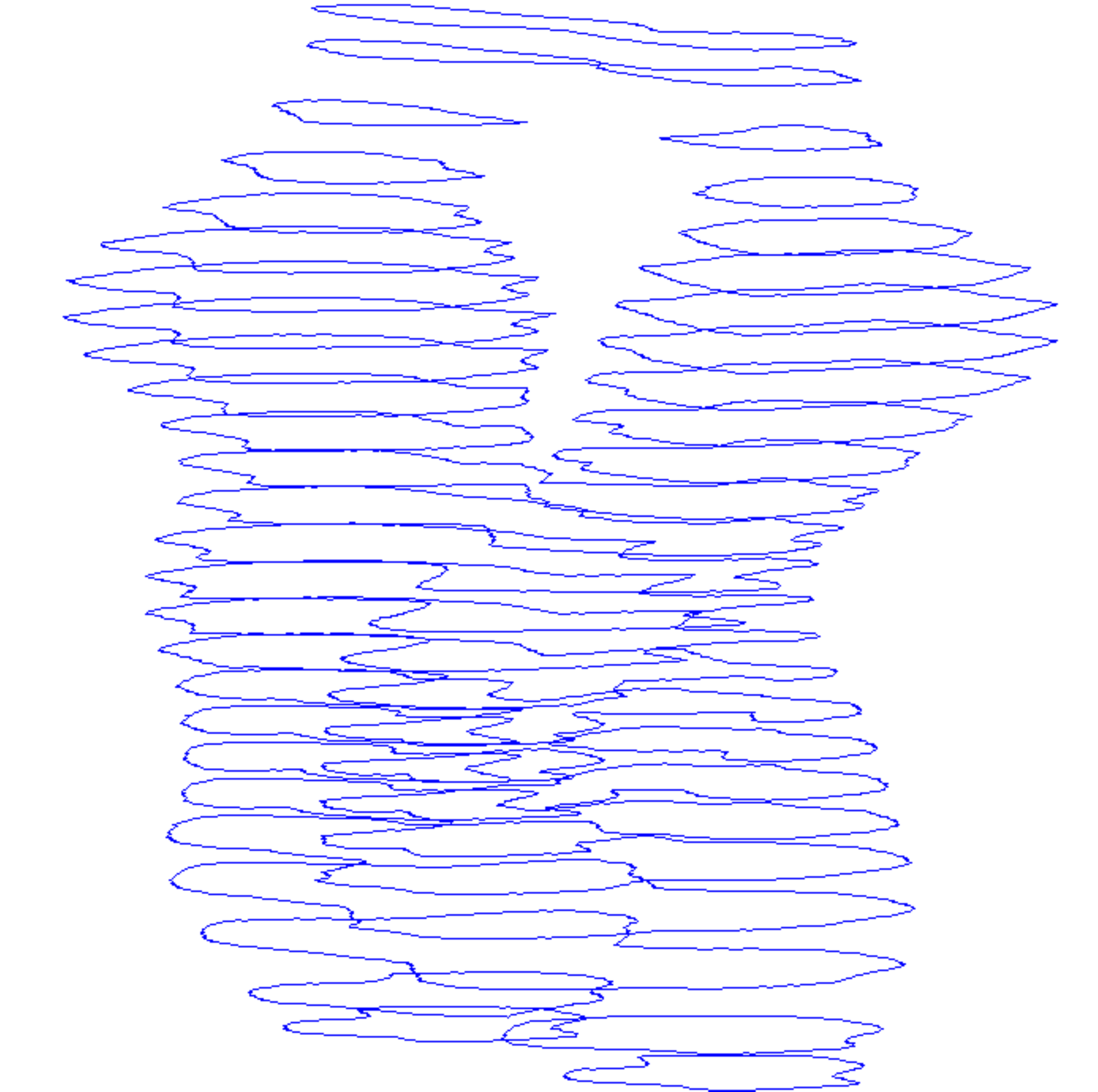}
 \tabularnewline
\emph{d} & \emph{e} & \emph{f} \tabularnewline
\includegraphics[scale=0.16]{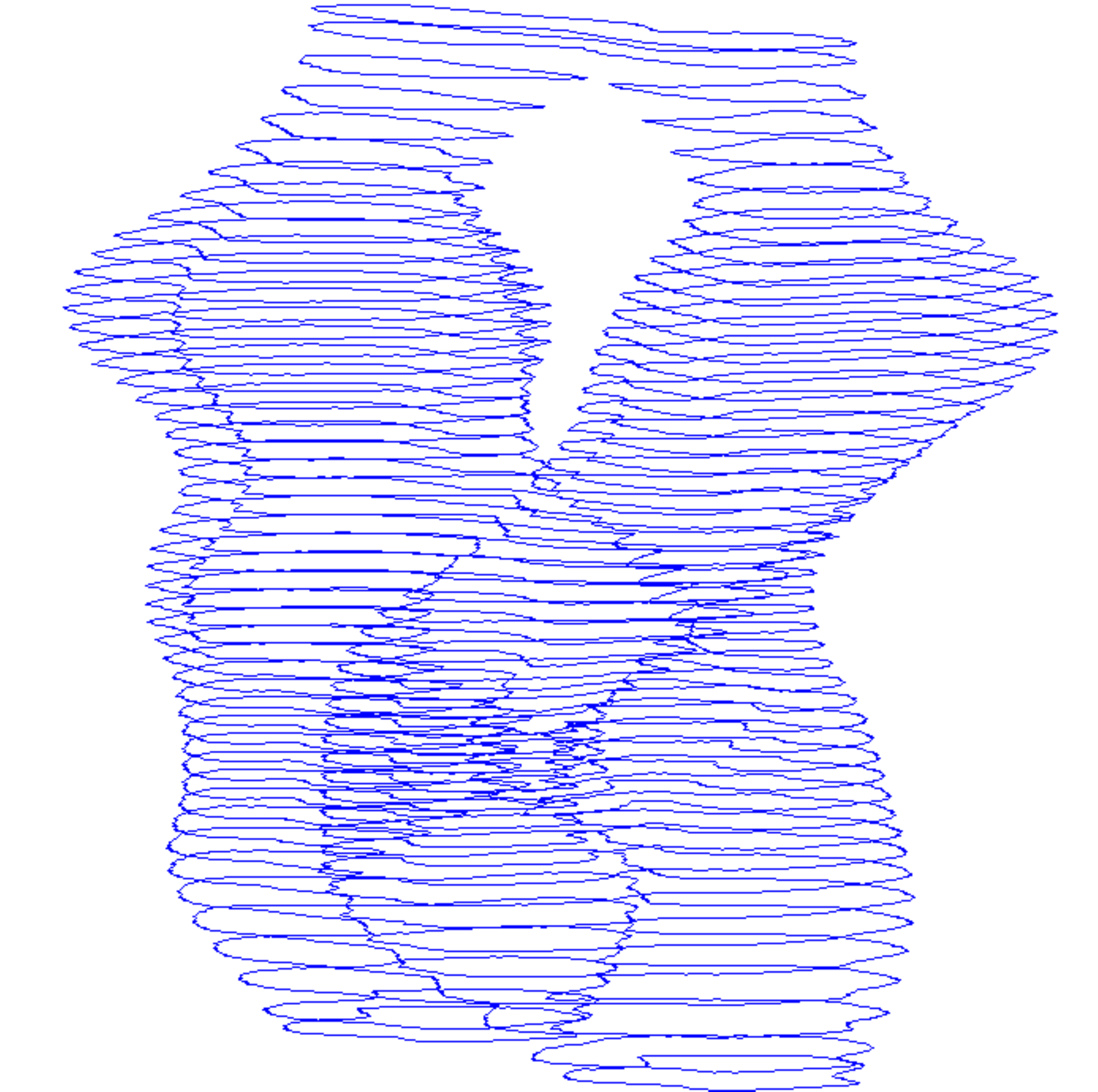} & \includegraphics[scale=0.16]{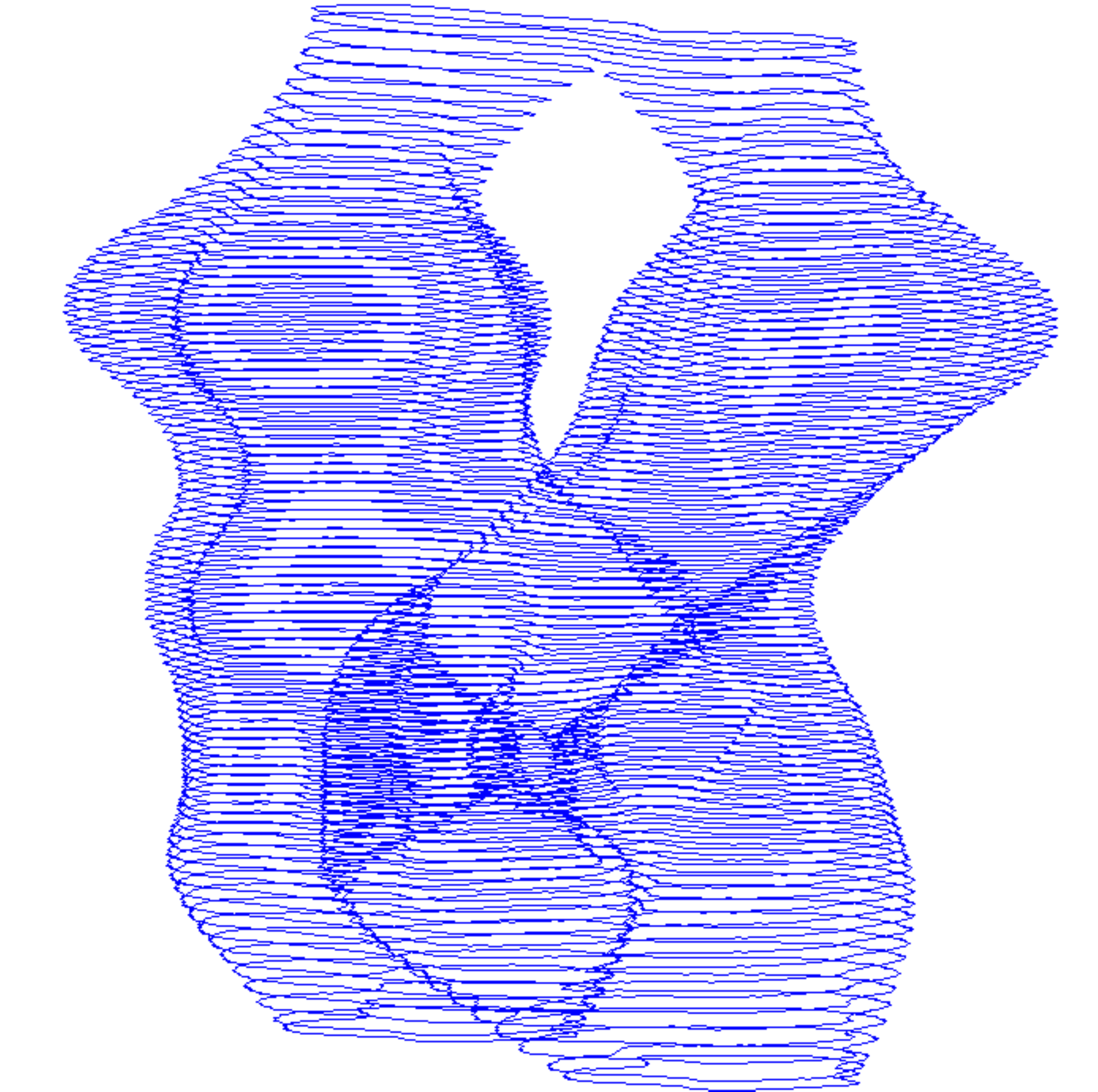} & \includegraphics[scale=0.16]{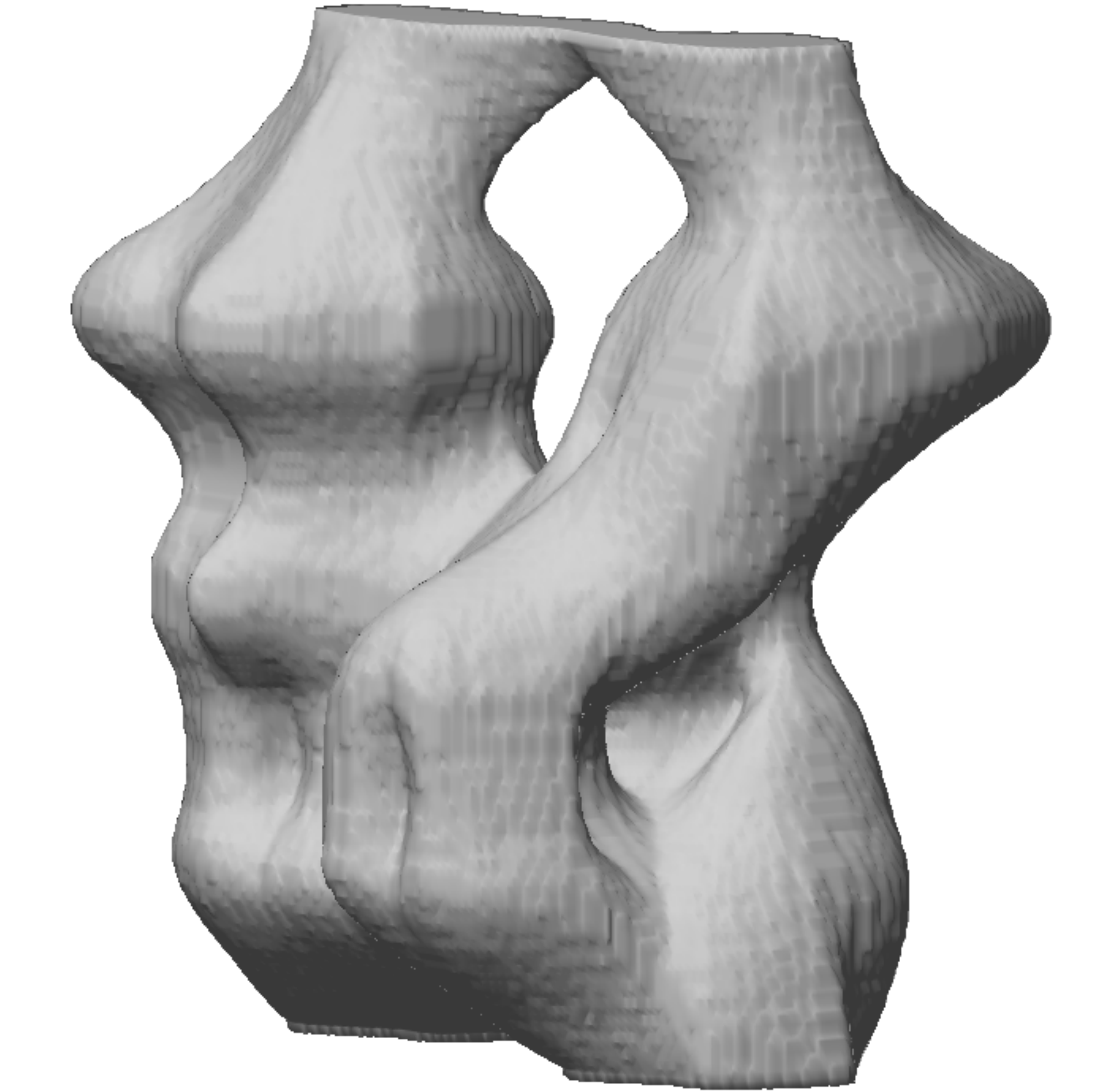}
\tabularnewline
\end{tabular}
\caption{Refinement of sets with Chaikin subdivision scheme: a. the
initial sets; b,c,d,e. the refined sets after 1,2,3,4 subdivision
steps respectively; f. visualization of the resulting SVF as a 3D
object. } \label{fig:RecontructionProcessSpline}
\end{center}
\end{figure}

\begin{figure}[h!!!!]
\begin{center}
\begin{tabular}{c   c  c}
\emph{a} & \emph{b} & \emph{c} \tabularnewline
\includegraphics[scale=0.16]{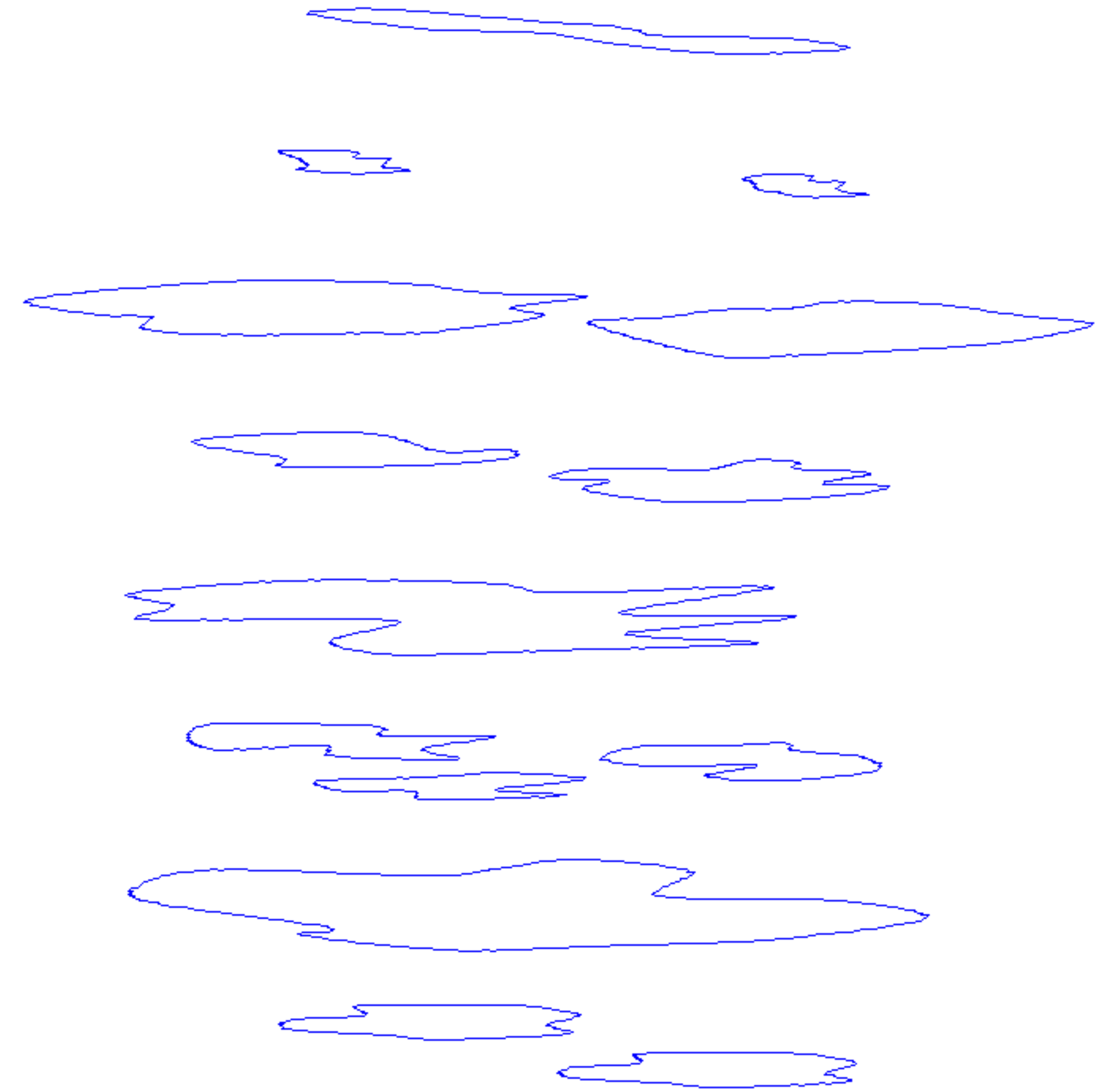} & \includegraphics[scale=0.16]{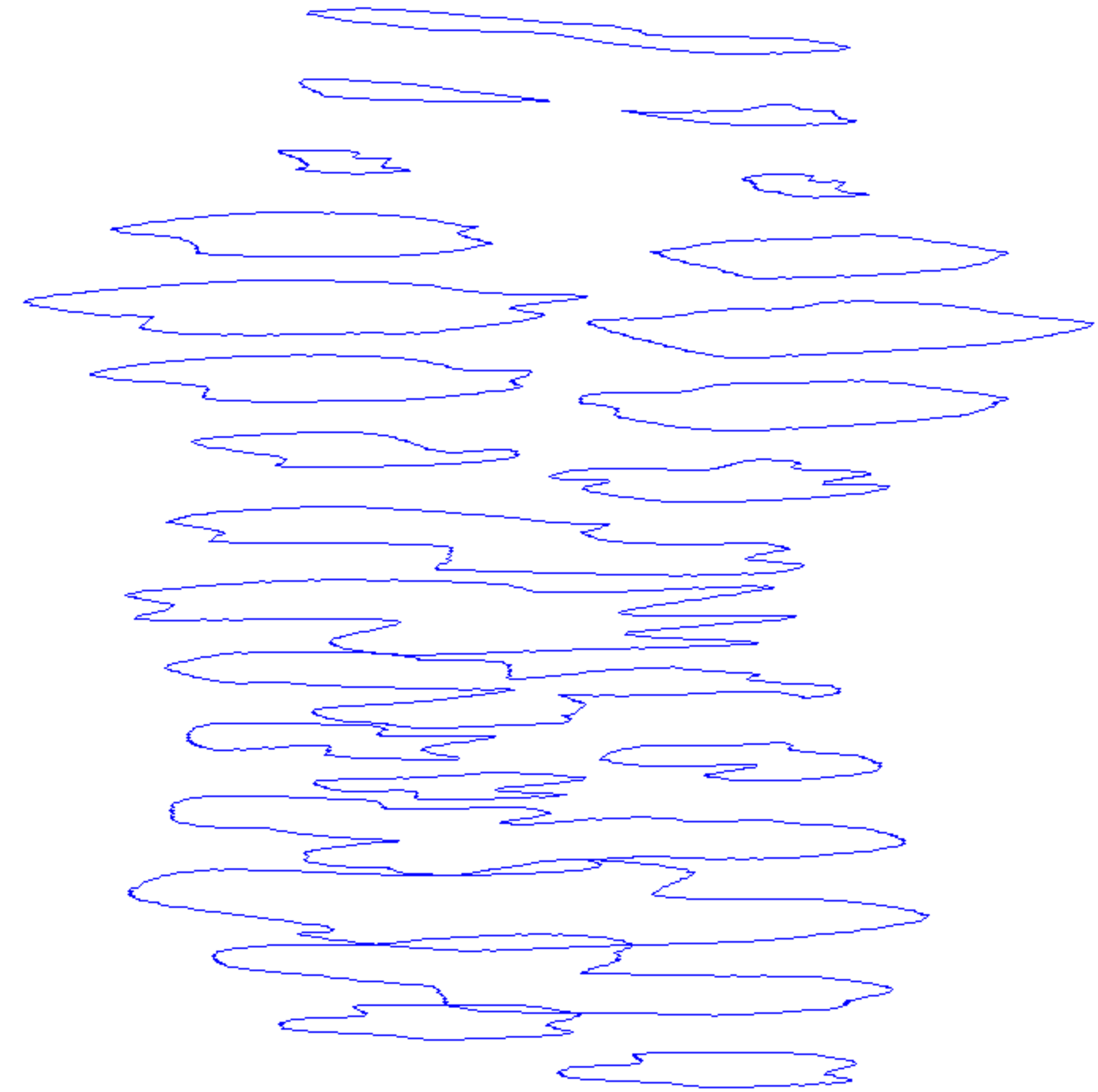} & \includegraphics[scale=0.16]{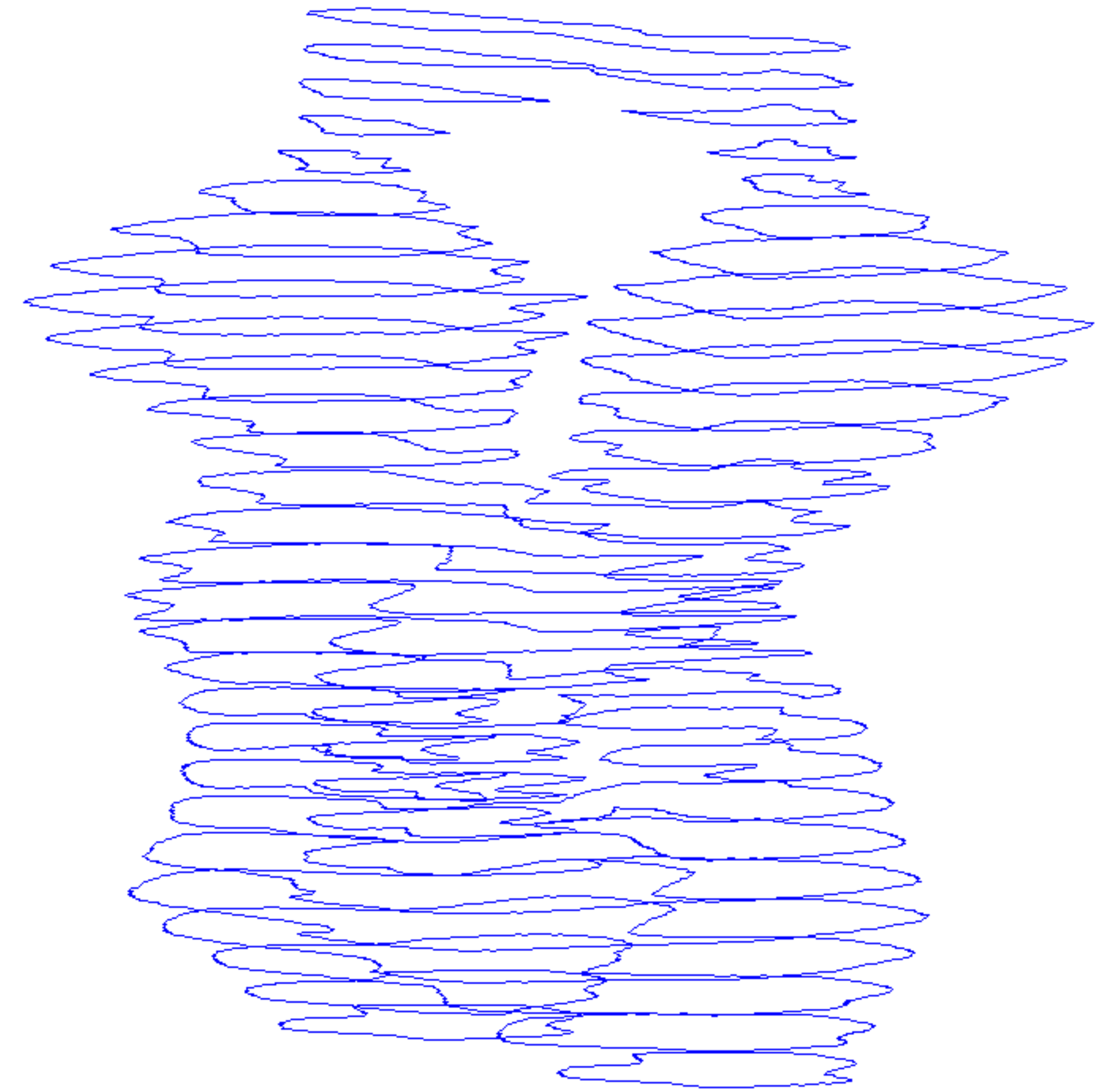}
 \tabularnewline
\emph{d} & \emph{e} & \emph{f} \tabularnewline
\includegraphics[scale=0.16]{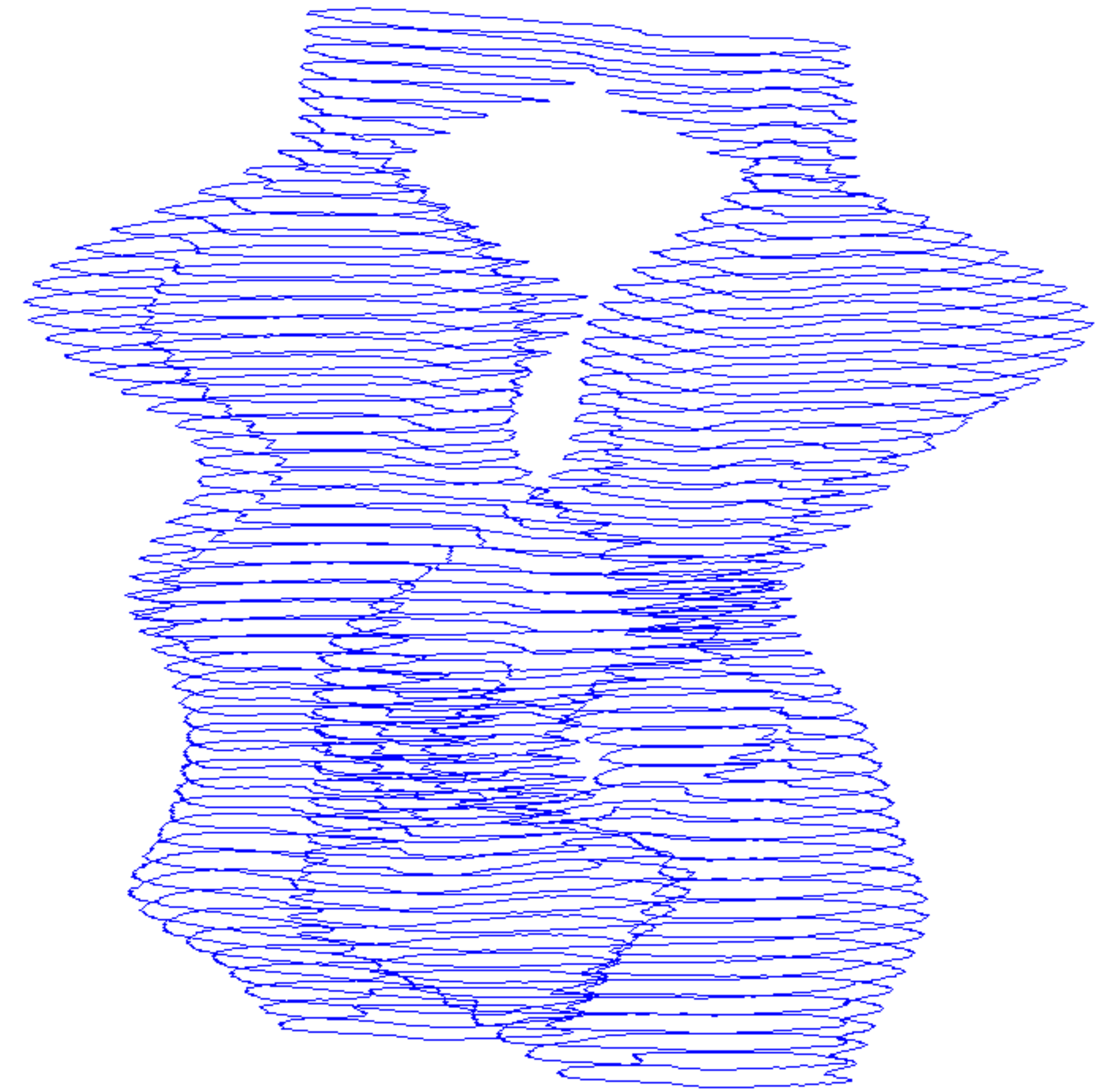} & \includegraphics[scale=0.16]{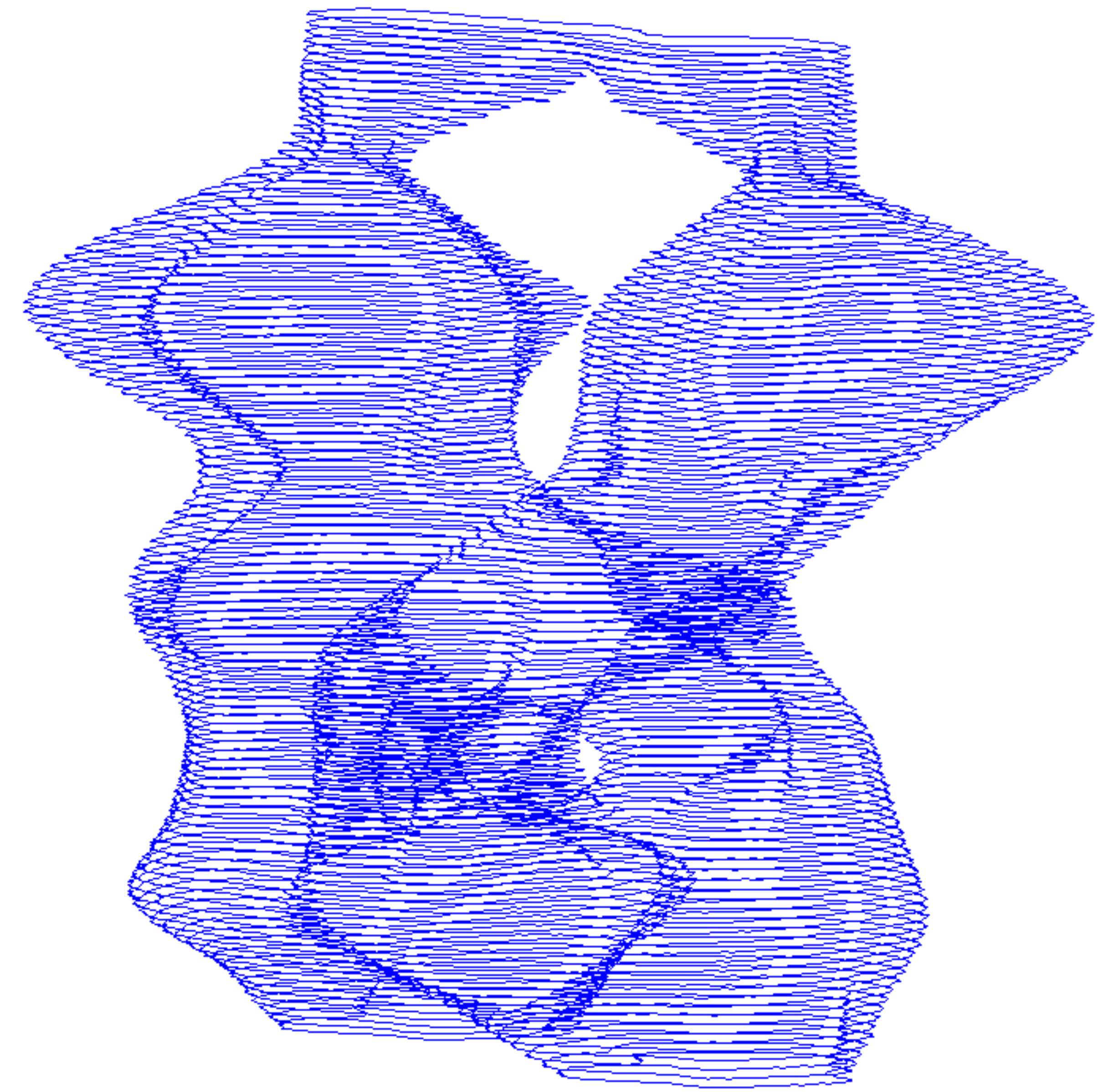} & \includegraphics[scale=0.16]{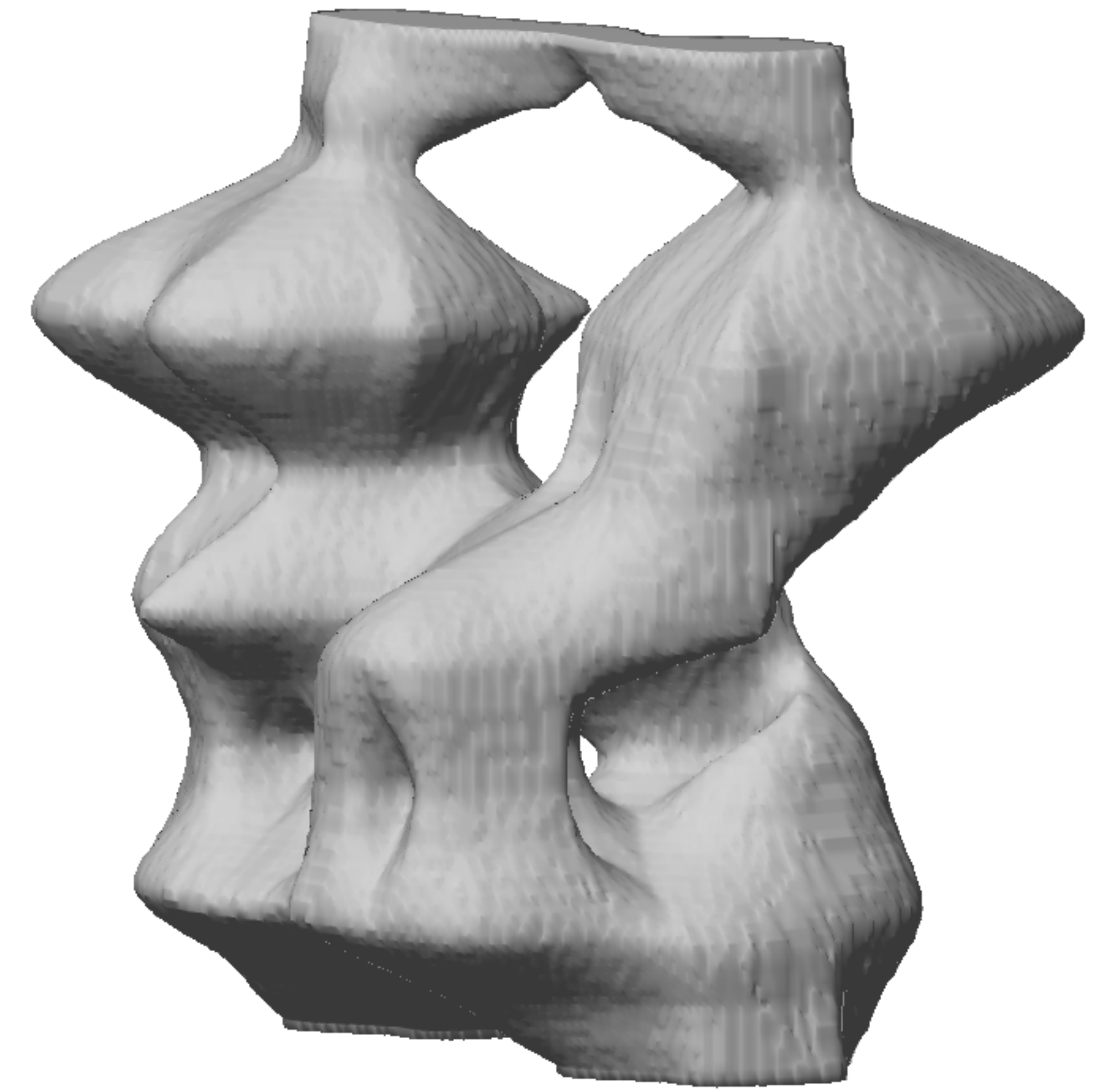}
\tabularnewline
\end{tabular}
\caption{Refinement of sets with the 4-point subdivision scheme: a.
the initial sets; b,c,d,e. the refined sets after 1,2,3,4
subdivision steps respectively; f. visualization of the resulting
SVF as a 3D object. } \label{fig:RecontructionProcess4Point}
\end{center}
\end{figure}

\begin{figure}[h!!!!!]
\begin{center}
\includegraphics[scale=0.6]{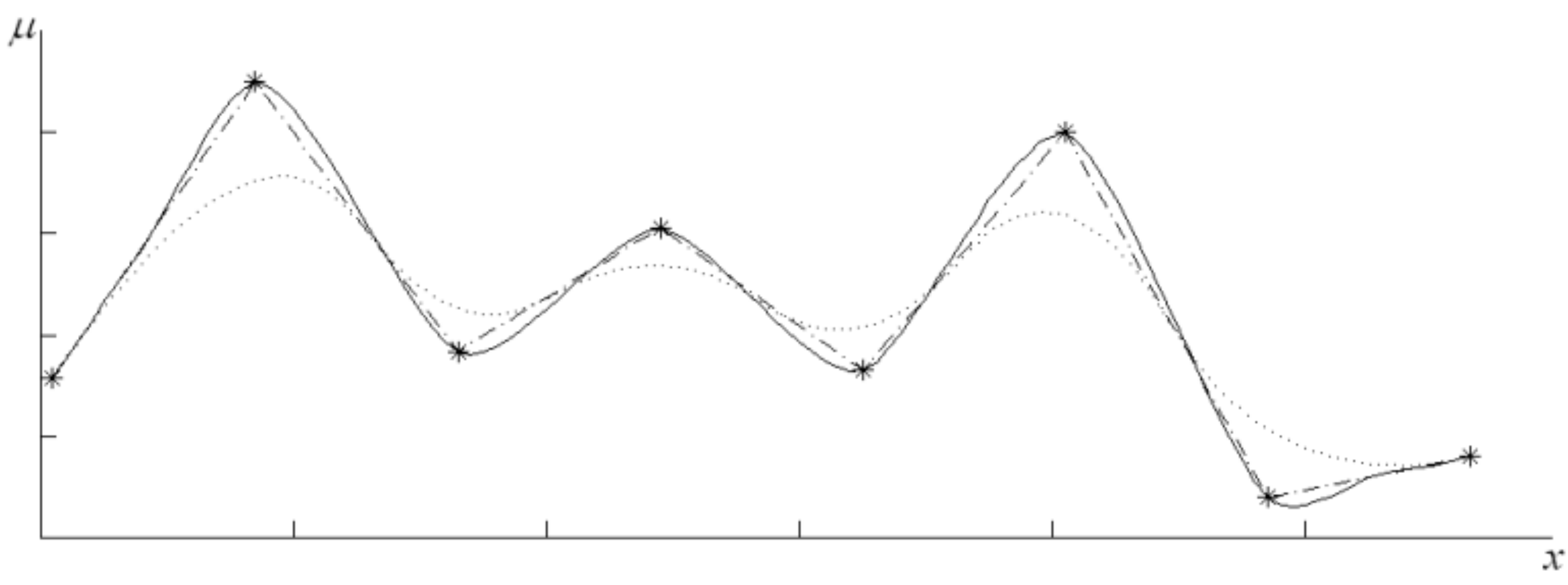}
\caption{$\mu \left( {F^{[i]} \left( x \right)} \right)$, $i =
1,2,3$, for the SVFs in Figures \ref{fig:PeiceWiseInterpolation},
\ref{fig:RecontructionProcessSpline},
\ref{fig:RecontructionProcess4Point}, depicted by the dash-dotted,
the dotted and the solid lines respectively. The measures of the
initial sets are denoted by $*$.} \label{fig:Measure1}
\end{center}
\end{figure}

Next we consider the piecewise interpolation and Chaikin subdivision
applied to monotone data $\widetilde F_0 ,...,\widetilde F_7 $,
$\widetilde F_i \supseteq \widetilde F_{i + 1}$, $i = 0,...,6$. In
view of the inclusion property of the measure average and Corollary
\ref{Monotonicity} the resulting SVFs are monotone in the set-valued
sense, as illustrated in Figure
\ref{fig:RecontructionProcessMonotone}. In Figure
\ref{fig:MeasureMonotone} we plot $\mu \left(\widetilde
F^{[i]}\left(x\right)\right)$, where $\widetilde
F^{[i]}\left(x\right)$, $i = 1,2$, are the SVFs obtained using
piecewise interpolation and Chaikin subdivision respectively. By
Corollary \ref{MeasureTransition}, $\mu \left(\widetilde
F^{[i]}\left(x\right)\right)$, $i = 1,2$ are the same as the
functions obtained by the application of piecewise linear
interpolation and Chaikin subdivision respectively to the initial
data $\mu \left( {\widetilde F_i } \right)$, $i = 0,...,7$. In
particular $\mu \left( {\widetilde F^{[2]}} \left(x\right)\right)$
has a continuous derivative, which in view of Corollary
\ref{SplineMetricDerivative} illustrates the continuity of the
metric velocity of ${\widetilde F^{[2]}} \left(x\right)$.

\begin{figure}
\begin{center}
\begin{tabular}{c   c  c}
\emph{a} & \emph{b} & \emph{c} \tabularnewline
\includegraphics[scale=0.4]{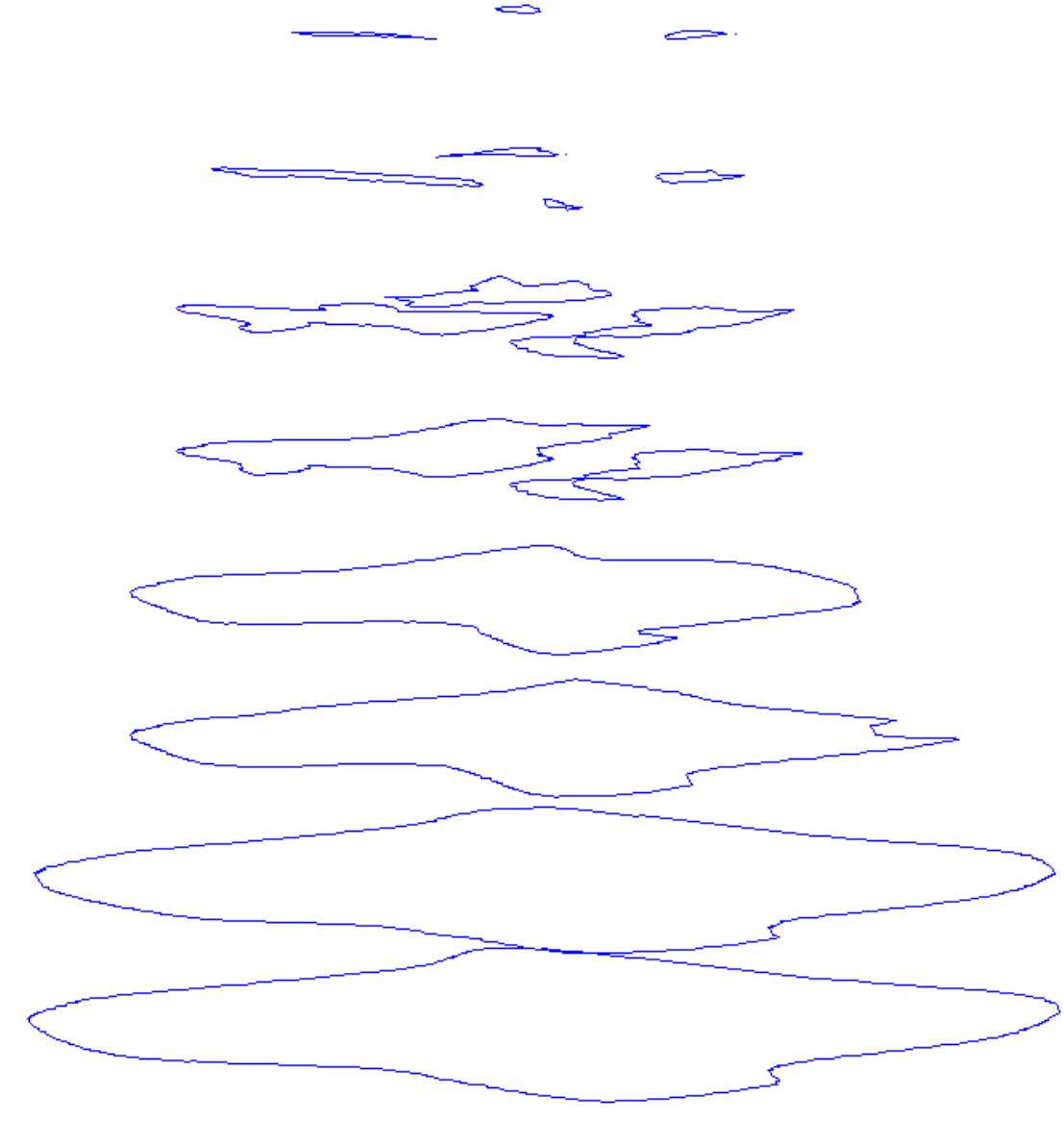} & \includegraphics[scale=0.4]{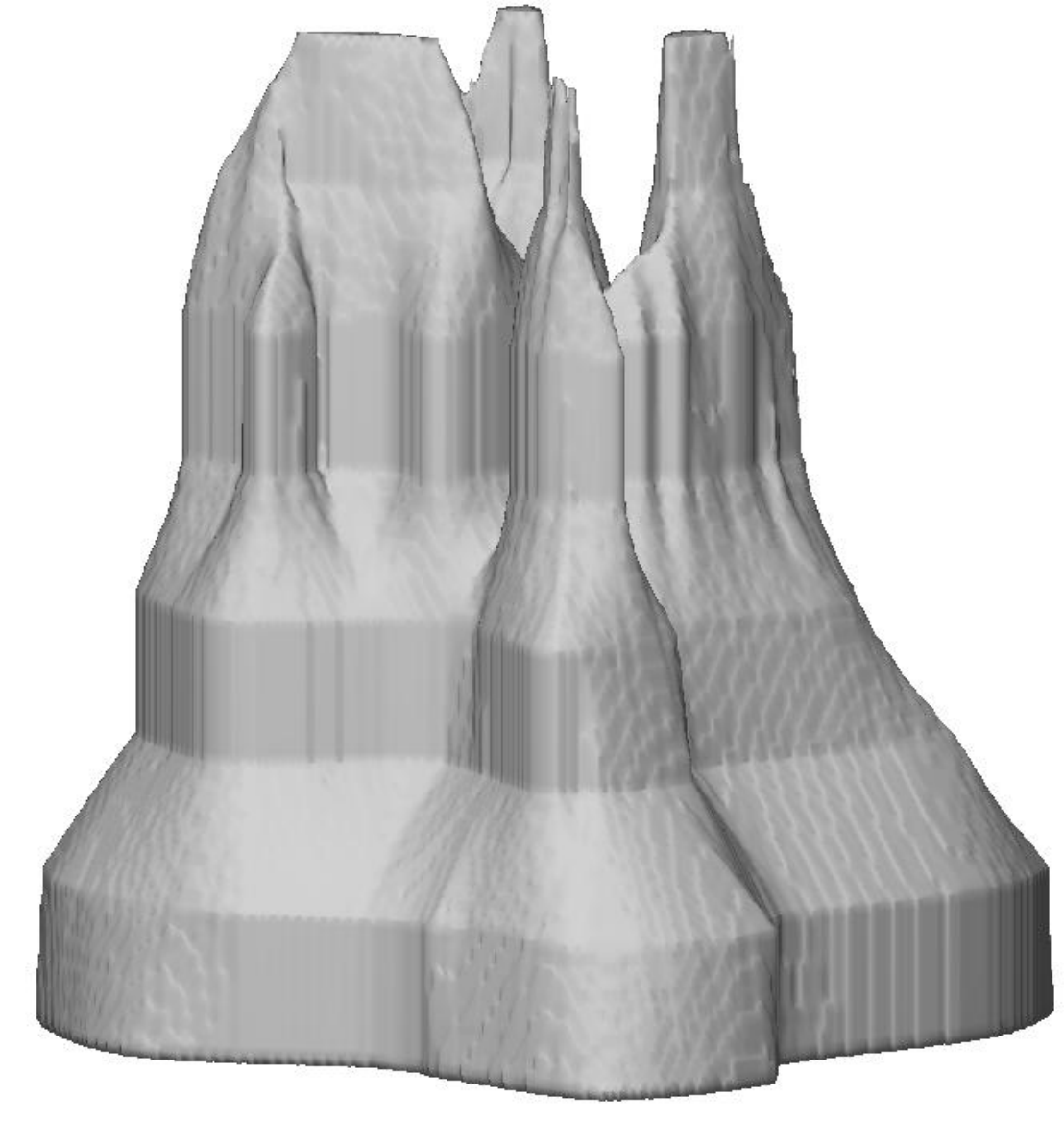} & \includegraphics[scale=0.4]{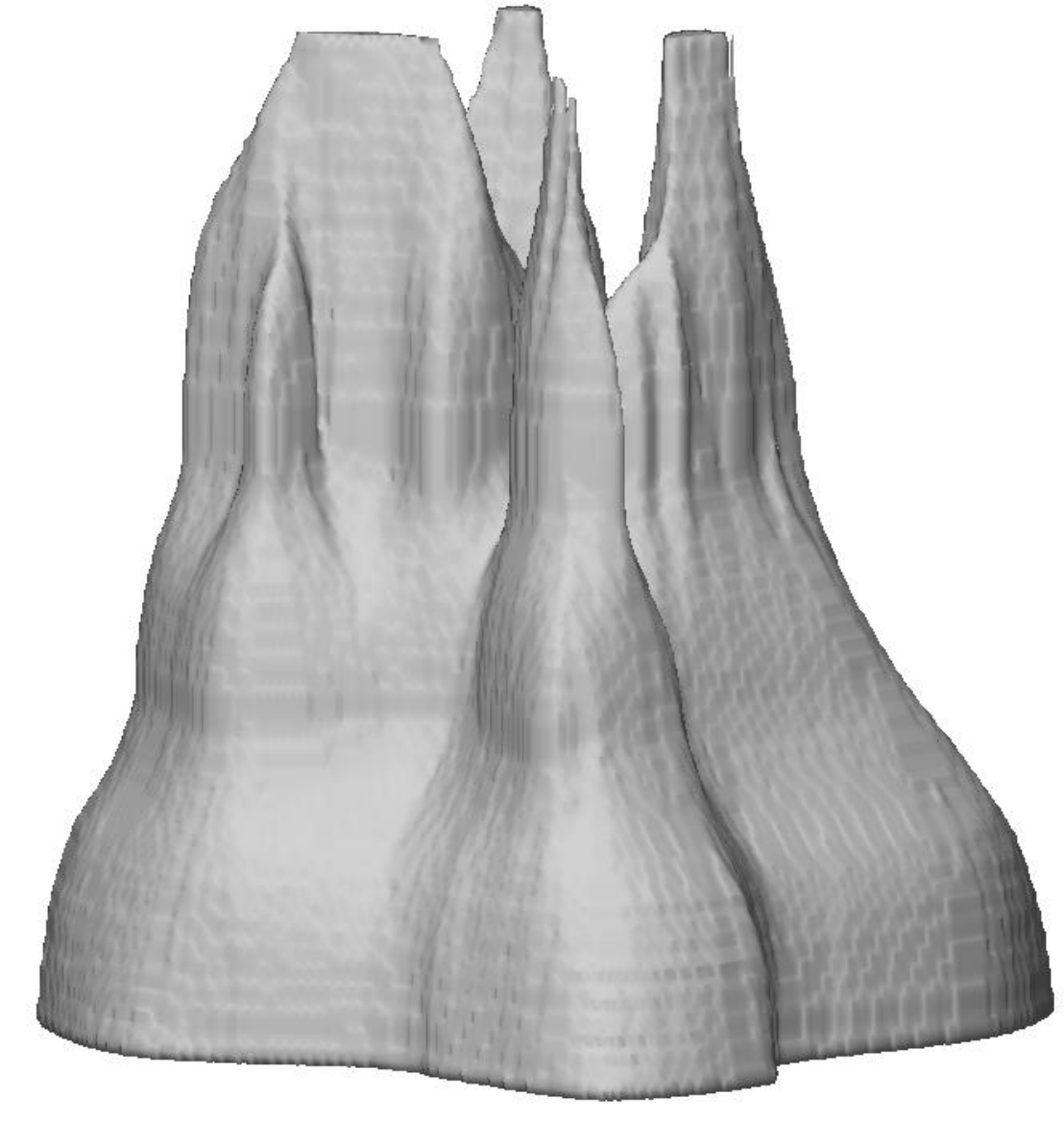}
\end{tabular}
\caption{The piecewise interpolation and Chaikin subdivision applied
to monotone set-valued data: a. the original sets b. piecewise
interpolation c. Chaikin subdivision}
\label{fig:RecontructionProcessMonotone}
\end{center}
\end{figure}

\begin{figure}
\begin{center}
\includegraphics[scale=0.4]{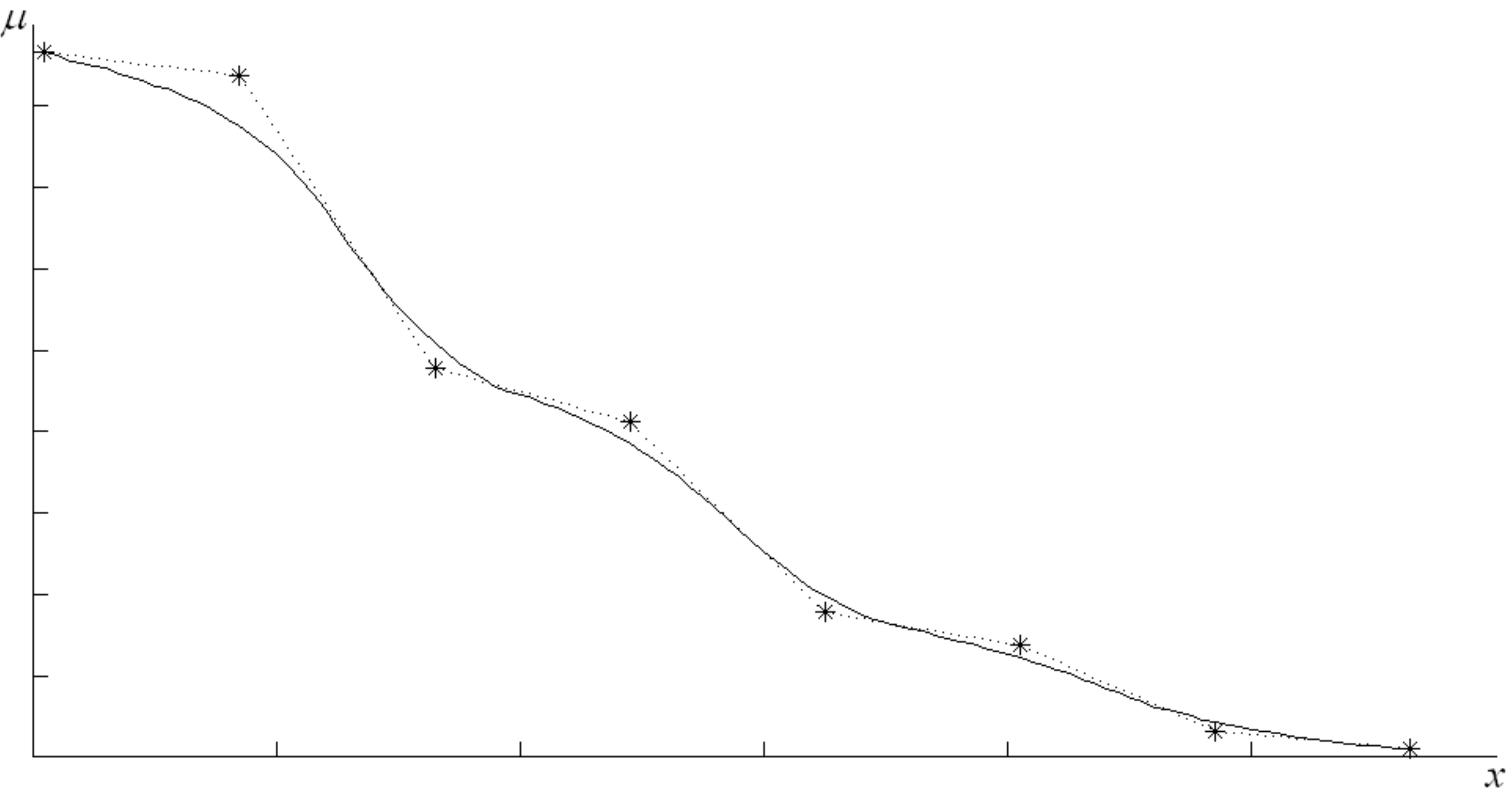}
\caption{$\mu \left( {\widetilde F^{[i]} \left( x \right)} \right)$,
$i = 1,2$, for the SVFs in Figure
\ref{fig:RecontructionProcessMonotone}b and Figure
\ref{fig:RecontructionProcessMonotone}c, depicted by the dotted and
the solid lines respectively. The measures of the initial sets are
denoted by $*$.} \label{fig:MeasureMonotone}
\end{center}
\end{figure}

\section{Extensions}\label{sectionExtensions}
We extend some of the results obtained in metric spaces of sets in
this work and in \cite{dyn2001spline} to general metric spaces
endowed with a binary average satisfying certain properties. Let
$\left\{ {X,d_X } \right\}$ be a metric space, and let $ \boxplus$
be an average on elements of $X$ defined for non-negative averaging
parameters ($\boxplus : [0,1] \times X \times X \to X$). Assume that
the average $\boxplus$ satisfies the interpolation property and the
metric property in List of Properties \ref{PropertiesList} with
$\mathfrak{J}_n$, $d_\mu$ and $\bigoplus$ replaced by $X$,$d_X$ and
$\boxplus$ respectively. Then:
\begin{enumerate}
\item A piecewise interpolant based on $\boxplus$ can be defined as in (\ref{PieceApproxEq}),
leading to an approximation result analogous to Corollary
\ref{PieceApprox}.

\item The spline subdivision schemes can be defined using the Lane-Riesenfeld algorithm.
Such an adaptation leads to convergence and approximation results
analogous to Theorems
\ref{SplineConvergence},\ref{SplineKApproximation} and Corollary
\ref{SplineInfApproximation}, with the limit of the subdivision in
the \emph{metric completion} of $\left\{ {X,d_X } \right\}$.

\item  Under the additional assumptions that the average $\boxplus$ is defined  also for averaging parameters outside
$[0,1]$ and satisfies the submetric property in List of Properties
\ref{PropertiesList},the 4-point subdivision scheme can be adapted
to the elements of $X$, as in relations
(\ref{RefinementRuleForSets1})-(\ref{RefinementRuleForSets2}).
Convergence and approximation results analogous to Theorems
\ref{FourPointConvergence}, \ref{FourPointKApprox} and Corollary
\ref{FourPointInfApprox} are obtained in a similar way.
\end{enumerate}

\bibliographystyle{abbrv}

\end{document}